\documentclass[12pt]{article}
\usepackage{amssymb}
\usepackage{eufrak}
\usepackage{amsmath}
\usepackage{soul}
\usepackage{xr}

\marginparwidth 0pt

\oddsidemargin        0pt \evensidemargin  0pt \marginparsep 0pt

\topmargin 0pt

\textwidth   6.4 in
 \textheight  8.5 in

\title{On the Definitions of Difference Galois Groups}
\author{Zo{\'e} Chatzidakis\footnote{The author thanks
the Isaac Newton Institute for Mathematical Sciences for its hospitality and financial support
during spring 2005.},  Charlotte Hardouin,  Michael F. Singer\footnote{
The preparation of this paper was supported by NSF Grant CCR-
0096842 and by funds from the Isaac Newton Institute for
Mathematical Sciences during a visit in May 2005.}}

\begin{document}
\def\QED{\hbox{\hskip 1pt \vrule width4pt height 6pt depth 1.5pt \hskip 1pt}}
\newcounter{defcount}[section]
\setlength{\parskip}{1ex}
\newtheorem{thm}{Theorem}[section]
\newtheorem{lem}[thm]{Lemma}
\newtheorem{cor}[thm]{Corollary}
\newtheorem{prop}[thm]{Proposition}
\newtheorem{defin}[thm]{Definition}
\newtheorem{remark}[thm]{Remark}
\newtheorem{remarks}[thm]{Remarks}
\newtheorem{ex}[thm]{Example}
\def\GL{{\rm GL}}
\def\SL{{\rm SL}}
\def\Sp{{\rm Sp}}
\def\sl{{\rm sl}}
\def\sp{{\rm sp}}
\def\gl{{\rm gl}}
\def\PSL{{\rm PSL}}
\def\SO{{\rm SO}}

\def\Gal{{\cal G}al}
\def\Sym{{\rm Sym}}
\def\tildea{\tilde a}
\def\tildeb{\tilde b}
\def\tildeg{\tilde g}
\def\tildee{\tilde e}
\def\tildeA{\tilde A}
\def\calG{{\cal G}}
\def\calH{{\cal H}}
\def\calT{{\cal T}}
\def\calC{{\cal C}}
\def\calP{{\cal P}}
\def\calS{{\cal S}}
\def\calM{{\cal M}}
\def\curve{{\rm {\bf C}}}
\def\P1{{\rm {\bf P}}^1}
\def\Ad{{\rm Ad}}
\def\ad{{\rm ad}}
\def\Aut{{\rm Aut}}
\def\Int{{\rm Int}}
\def\CX{{\mathbb C}}
\def\QX{{\mathbb Q}}
\def\NX{{\mathbb N}}
\def\ZX{{\mathbb Z}}
\def\DX{{\mathbb D}}
\def\AX{{\mathbb A}}
\def\semi{\hbox{${\vrule height 5.2pt depth .3pt}\kern -1.7pt\times $ }}

\newenvironment{prf}[1]{\trivlist
\item[\hskip \labelsep{\bf
#1.\hspace*{.3em}}]}{~\hspace{\fill}~$\square$\endtrivlist}
\newenvironment{proof}{\begin{prf}{Proof}}{\end{prf}}
 \def\square{\QED}
 \newenvironment{proofofthm}{\begin{prf}{Proof of Theorem~\ref{ext}}}{\end{prf}}
 \def\square{\QED}
\newenvironment{sketchproof}{\begin{prf}{Sketch of Proof}}{\end{prf}}

\def\calu{{\cal U}}
\def\si{\sigma}
\def\cee{\CX}
\def\HX{{\mathbb H}}
\def\GX{{\mathbb G}}
\def\Krd{{\rm Kr.dim}}
\def\dcl{{\rm dcl}}

\date{April 24, 2006}
\maketitle
\begin{abstract}{We compare several definitions of the Galois group of a linear difference equation that have arisen in algebra, analysis and model theory and show, that these groups are isomorphic over suitable fields.  In addition, we study properties of Picard-Vessiot extensions over fields with not necessarily algebraically closed subfields of constants.
}
\end{abstract}

\section{Introduction}\label{intro}
In the modern Galois theory of polynomials of degree $n$ with
coefficients in a field $k\footnote{All fields in this paper are
  assumed to be of characteristic zero}$, one associates to a
polynomial $p(x)$ a splitting field $K$,  that is a field $K$ that
is generated over $k$ by the roots of $p(x)$. All such fields are
$k$-isomorphic and  this allows one to define the Galois group of
$p(x)$ to be the group of $k$-automorphisms of such a $K$. If $k$ is
a differential field and $Y' = AY, A$ an $n\times n$ matrix with
entries in $k$, one may be tempted to naively define a ``splitting
field'' for this equation to be a differential field  $K$ containing $k$ and generated (as a differential field) by the entries of a fundamental solution matrix $Z$ of the differential equation\footnote{that is, an invertible $n \times n$ matrix $Z$ such that $Z'=AZ$. Note that the columns of $Z$ form a  basis of the solution space}.  Regrettably, such a field is not unique in general.  For example, for the equation $y' = \frac{1}{2x}y$ over $k = \CX(x), x' = 1$, the fields $k(x^{1/2})$ and $k(z), z$ transcendental over $k$ and $z' = \frac{1}{2x}z$ are not $k$-isomorphic. If one insists that the constants $C_k = \{c \in k \ | \ c'=0\}$ are algebraically closed and that $K$ has no new constants, then Kolchin~\cite{kolchin_exist} showed that such a $K$ exists (and is called the {\em Picard-Vessiot} associated with the equation) and is  unique up to $k$-differential isomorphism. Kolchin~\cite{kolchin48} defined the Galois group of such a field to be the group of $k$-differential automorphisms of $K$ and developed an appropriate Galois theory\footnote{It is interesting to note that the Galois theory was developed before it was known if such $K$ always exist. See the footnote on p.29 of \cite{kolchin48}}. \\[0.1in]
When one turns to difference fields $k$ with automorphism $\sigma$
and difference equations $\sigma Y = AY, \ A\in \GL_n(k)$, the
situation becomes more complicated.
One can consider difference fields $K$ such that $K$ is generated as a difference field by the entries of a fundamental solution matrix.  If  the field of constants $C_k = \{ c \in k \ | \ \sigma(c) = c \}$ is algebraically closed and $K$ has no new constants, then such a $K$ is indeed unique and is again called a Picard-Vessiot extension (\cite{PuSi}, Proposition 1.23 and Proposition 1.9). Unlike the differential case, there are equations for which such a field does not exist. In fact there are difference equations that do not have any nonzero solution in a difference field with algebraically closed constants.  For example, let $K$ be a difference field containing an element $z\neq 0$ such that $\sigma(z) = -z$.  One then has that $z^2$ is a constant.  If, in addition, the constants $C_K$ of $K$ are algebraically closed, then $ z \in C_K$ so $\sigma(z) = z$, a contradiction. This example means that either one must consider  ``splitting fields'' with subfields of constants that are not necessarily algebraically closed or consider ``splitting rings'' that are not necessarily domains. Both paths have been explored and the aim of this paper is to show that they lead, in essence, to the same Galois groups.\\[0.1in]
The field theoretic approach was developed by
Franke\footnote{Bialynicki-Birula \cite{BB2} developed a general
Galois theory for fields with operators but with restrictions that
forced his Galois groups to be connected.} in \cite{franke63} and succeeding
papers.  He showed that for Picard-Vessiot extension fields the
Galois group is a linear algebraic group defined over the constants
and that there is the usual correspondence between closed subgroups
and  intermediate difference fields.
Franke notes that Picard-Vessiot extension fields do not always exist
but does discuss situations when they do exist and results that can be
used when adjoining solutions of a linear difference equation forces one
to have new constants.\\[0.1in] 
Another field theoretic approach is contained in the work of Chatzidakis
and Hrushovski \cite{ChHr1}.  Starting from a difference field $k$, they
form a certain large difference extension  $\calu$ having the properties
(among others) that  for any element in $\calu$ but not in $k$, there is
an automorphism of $\calu$ that moves this element and that any set of
difference  equations (not necessarily  linear)  that have a solution in
some extension of $\calu$ already have a solution in $\calu$. The
subfield of constants $C_\calu$ is not an algebraically closed field.
Given a linear difference equation with coefficients in $k$, there
exists a fundamental solution matrix with entries in $\calu$.  Adjoining
the entries of these to $k(C_\calu)$ yields a difference field $K$. A
natural candidate for a Galois group  is the  group of difference
automorphisms of $K$ over $k(C_\calu)$ and these do indeed correspond to
points in a linear algebraic group. Equality of this automorphism group
with the Galois group coming from 
Picard-Vessiot rings is shown in \ref{zp5} under certain 
conditions (which are always verified when $C_k$ is algebraically
closed). Proofs are very algebraic in nature, and along the way produce
some new algebraic results on Picard-Vessiot rings: we find numerical
invariants of Picard-Vessiot rings of the equation $\sigma(X)=AX$, and
show how to compute them (see \ref{zp3} and \ref{zp4}). Furthermore, we show how to
compute the number of primitive idempotents of a Picard-Vessiot ring
when the field $C_k$ is algebraically closed (\ref{zcor3}).  This situation will be further
discussed in Section~\ref{mt}.\\[0.1in] 
The field theoretic approach also seems most natural in the analytic situation. For example, let $\calM(\CX)$ be the field of functions $f(x)$ meromorphic on the complex plain endowed with the automorphism defined by the shift $\sigma(x) = x+1$. Note that the constants $C_{\calM(\CX)}$ are the periodic meromorphic functions.  A theorem of Praagman \cite{praagman} states that a difference equation with  coefficients in $\calM(\CX)$ will have a fundamental solution matrix  with entries in $\calM(\CX)$. If $k$ is the smallest difference field containing the coefficients of the equation and $C_{\calM(\CX)}$ and $K$ is the smallest difference field containing $k$ and the entries of fundamental solution matrix, then, in this context, the natural Galois group is the set of difference automorphisms of $K$ over $k$. For example, the difference equation $\sigma(y) = -y$ has the solution $y = e^{\pi i x}$. This function is algebraic of degree $2$ over the periodic functions $k =  C_{\calM(\CX)}$. Therefore, in this context the Galois group of $K = k(e^{\pi i x})$ over $k$ is $\ZX/2\ZX$.\\[0.1in]
  One can also consider the field $\calM(\CX^*)$ of meromorphic functions on the punctured plane $\CX^* = \CX \backslash \{0\}$ with $q-$automorphism $\sigma_q(x) = qx, \  |q| \neq 1$. Difference equations in this context are $q$-difference equations and Praagman proved a global existence theorem in this context as well. The constants $C_{\calM(\CX^*)}$ naturally correspond to meromorphic functions on the elliptic curve $\CX^*/q^{\ZX}$ and one can proceed as in the case of the shift.   One can also define local versions (at infinity in the case of the shift and at zero or infinity in the case of $q$-difference equations). In the local case and for certain restricted equations  one does not necessarily need constants beyond those in $\CX$ (see \cite{etingof},  \cite{vdp_reversat}, \cite{PuSi} as well as connections between the local and global cases. Another approach to $q$-difference equations is given by Sauloy in \cite{sauloy_galois} and Ramis and Sauloy in \cite{ramis_sauloy} where a Galois group is produced using a combination of analytic and tannakian tools.  The Galois groups discussed in these papers do not appear to act on  rings or fields and, at present, it is not apparent  how the techniques presented here can be used to compare these groups to  other putative Galois groups.)\\[0.1in]
An approach to the Galois theory of difference equations with coefficients in difference fields based on rings that are not necessarily integral was presented in \cite{PuSi} (and generalized by Andr{\'e} in \cite{andre_galois} to include differential and difference equations with coefficients in fairly general rings as well). One defines a {\em Picard-Vessiot ring} associated with a difference equation $\sigma Y = AY$ with coefficients in a difference field $k$ to be a  simple difference ring (i.e., no $\sigma$-invariant ideals) $R$ of the form $R = k[z_{i,j}, 1/\det(Z)]$ where $Z = (z_{i,j})$ is a fundamental solution matrix  of $\sigma Y=AY$.  Assuming that $C_k$ is algebraically closed, it is shown in \cite{PuSi} that such a ring {\em always} exists and is unique up to $k$-difference isomorphism. A similar definition for differential equations yields a ring that is an integral domain and leads (by taking the field of quotients) to the usual theory of Picard-Vessiot extensions (see \cite{PuSi2003}).  In the difference case, Picard-Vessiot rings need not be domains.  For example, for the field $k = \CX$ with the trivial automorphism, the Picard-Vessiot ring corresponding to $\sigma y = -y$ is $\CX[Y]/(Y^2-1), \sigma(Y) = -Y$.   Nonetheless, one defines the {\em difference Galois group of $\sigma Y= AY$} to be the $k$-difference automorphisms of $R$ and one can shows that this is a linear algebraic group defined over $C_k$. In the example above, the Galois group is easily seen to be $\ZX/2\ZX$.  Furthermore,  in general there is a Galois correspondence between certain subrings of the total quotient ring and closed subgroups of the Galois group.\\[0.1in]
The natural question arises: {\em How do these various groups relate
to each other?} The example of $\sigma(y) = -y$ suggests that the
groups may be the same. Our main result, Theorem~\ref{thethm},
states that all these groups are isomorphic as algebraic groups over
a suitable extension of the constants. This result  has interesting
ramifications for the analytic theory of difference equations. In
\cite{hardouin_thesis}, the second author gave criteria to insure
that solutions, meromorphic in $\CX^*$,  of a first order
$q$-difference equation over $\CX(x)$ satisfy no  algebraic differential relation over $C_{\calM(\CX^*)}(x)$, where $C_{\calM(\CX^*)}$  is the field of meromorphic functions on the elliptic curve $\CX^*/q^{\ZX}$. The proof of this result presented in \cite{hardouin_thesis}  depended on knowing the dimension of  Galois groups in the analytic (i.e., field-theoretic) setting. These groups could be calculated in the ring theoretic setting of \cite{PuSi} and the results of the present paper allow one to transfer this information to the analytic setting.  Although we will not go into more detail concerning the results of \cite{hardouin_thesis}, we will give an example of how one can deduce transcendence results in the analytic setting from their counterparts in the formal setting.\\[0.1in]
The rest of the paper is organized as follows. In Section~\ref{galoissec}, we show how results of \cite{PuSi} and \cite{PuSi2003} can be modified to prove the correspondence of various Galois groups.  In Section~\ref{tannaka} we prove this result again in the special case of $q$-difference equations over $\CX(x)$ using tannakian tools in the spirit of Proposition 1.3.2 of \cite{katz_calculations}. In Section~\ref{mt}, we discuss the model-theoretic approach in more detail and, from this point of view, show the correspondence of the Galois groups. In addition, we consider some additional properties of Picard-Vessiot rings over fields with constant subfields that are not necessarily algebraically closed. The different approaches and proofs have points of contacts (in particular, Proposition~\ref{p2}) and we hope comparisons of these techniques are enlightening. \\[0.1in]
The authors would like to thank Daniel Bertrand for suggesting the
approach of Section~\ref{tannaka} and his many other useful comments
concerning this paper.

\section{A Ring-Theoretic Point of View}\label{galoissec} In this section we shall consider groups of difference automorphisms of rings   and fields generated by solutions of linear difference equations and show that these groups are isomorphic, over the algebraic closure of the constants to the Galois groups defined in \cite{PuSi2003}. We begin by defining the rings and fields we will study.

\begin{defin} Let $K$ be a difference field with automorphism $\sigma$ and let $A \in \GL_n(K)$.\\[0.1in]
a.~We say that a difference ring extension $R$ of $K$ is a {\em weak
Picard-Vessiot ring} for  the equation $\sigma X = AX$ if
\begin{itemize}
\item[(i)]  $R = K[Z,\frac{1}{\det(Z)}]$ where $Z \in \GL_n(R)$ and $\sigma Z = AZ$ and
 \item[(ii)]  $C_R = C_K$
 \end{itemize}
 \noindent b.~We say that a difference field extension $L$ of $K$ is a {\em weak Picard-Vessiot field} for $\sigma X = AX$ if $C_L = C_K$ and $L$ is the quotient field of a weak Picard-Vessiot ring of $\sigma X = AX$.
 \end{defin}
In \cite{PuSi}, the authors define a {\em Picard-Vessiot ring}  for the equation $\sigma Y = AY$  to be a difference ring $R$ such that $(i)$ holds and in addition $R$ is simple as a difference ring, that is, there are no $\sigma$-invariant ideals except $(0)$ and $R$.  When $C_K$ is algebraically closed, Picard-Vessiot rings exist, are unique up to $K$-difference isomorphisms  and have the same constants as $K$ (\cite{PuSi}, Section 1.1).  Therefore in this case, the Picard-Vessiot ring will be a weak Picard-Vessiot ring.\\[0.1in]
In general, even when the field of constants is algebraically closed, Example 1.25 of \cite{PuSi} shows that there will be weak Picard-Vessiot rings that are not Picard-Vessiot rings. Furthermore this example shows that the quotient field of a weak Picard-Vessiot integral domain $R$ need not necessarily have the same constants as $R$ so the requirement that $C_L = C_K$ is not superfluous.\\[0.1in]
The Galois theory of Picard-Vessiot rings is developed  in
\cite{PuSi} for Picard-Vessiot rings $R$ over difference fields $K$
with algebraically closed constants $C_K$.  In particular, it is
shown (\cite{PuSi}, Theorem 1.13) that the groups of difference
$K$-automorphisms of $R$ over $K$ corresponds to the set of
$C_K$-points of a linear algebraic group defined over $C_K$.  A
similar result for differential equations is proven in
(\cite{PuSi2003}, Theorem 1.27).  It has been observed by many
authors beginning with Kolchin (\cite{DAAG}, Ch.~VI.3 and VI.6;
others include \cite{andre_galois}, \cite{deligne_milne},
\cite{deligne_tannakian}, \cite{katz_calculations},
\cite{papanikolas} in a certain characteristic $p$ setting for
difference equations) that one does not need $C_k$ to be
algebraically closed to achieve this latter result.  Recently,
Dyckerhoff \cite{dyckerhoff} showed how the proof of Theorem 1.27 of
\cite{PuSi2003} can be adapted in the differential case to fields
with constants that are not necessarily algebraically closed. We
shall give a similar adaption in the difference case.

\begin{prop}\label{p1} Let $K$ be a difference field of characteristic zero and let $\sigma Y = AY, A\in \GL_n(K)$ be a difference equation over $K$. Let $R$ be a weak Picard-Vessiot ring for this equation over $K$. The group  of difference $K$-automorphisms of $R$ can be identified with the $C_K$-points of a linear algebraic group $G_R$ defined over $C_K$.
 \end{prop}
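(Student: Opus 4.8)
The standard route (following Proposition 1.13 of \cite{PuSi} and Theorem 1.27 of \cite{PuSi2003}, as adapted by Dyckerhoff \cite{dyckerhoff}) is to realize the functor of difference automorphisms as a representable affine group scheme over $C_K$ and then check that it is in fact a linear algebraic group (i.e., of finite type, affine, with coordinate ring finitely generated over $C_K$). Concretely, write $R = K[Z, 1/\det Z]$ with $\sigma Z = AZ$ as in the definition of weak Picard-Vessiot ring. For a $K$-difference automorphism $\tau$ of $R$, the matrix $\tau(Z)$ again satisfies $\sigma(\tau(Z)) = A\,\tau(Z)$, so $Z^{-1}\tau(Z)$ is fixed by $\sigma$, hence lies in $\GL_n(C_R) = \GL_n(C_K)$ by condition (ii). Thus $\tau \mapsto [\tau] := Z^{-1}\tau(Z)$ gives an injective map from the automorphism group into $\GL_n(C_K)$. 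The plan is to identify the image with the $C_K$-points of a Zariski-closed subgroup of $\GL_{n,C_K}$.

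**Key steps.** First I would form $S := C_K \otimes_K R$... more precisely, the right object is the following: consider $R \otimes_K R$ with $\sigma$ acting diagonally; inside it the entries of $(Z \otimes 1)^{-1}(1 \otimes Z)$ are constants, so they generate a $C_K$-subalgebra, and one shows $R \otimes_K R \cong R \otimes_{C_K} A_R$ where $A_R := C_K[\,(Z\otimes 1)^{-1}(1\otimes Z),\ \det^{-1}]$ — this is the key structural lemma, proven exactly as in \cite{PuSi2003}, Lemma 1.25 / \cite{PuSi}. The comultiplication on $A_R$ comes from the coassociativity diagram $R \otimes_K R \otimes_K R$, making $A_R$ a Hopf algebra over $C_K$; then $G_R := \mathrm{Spec}(A_R)$ is an affine group scheme over $C_K$, and because $A_R$ is a quotient of the finitely generated algebra $C_K[X_{ij}, \det(X)^{-1}]$, $G_R$ is in fact a linear algebraic group over $C_K$. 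Second, I would verify that the abstract group $\mathrm{Aut}_\sigma(R/K)$ is naturally identified with $G_R(C_K)$: a $C_K$-point of $G_R$ is a $C_K$-algebra homomorphism $A_R \to C_K$, which under the isomorphism $R \otimes_K R \cong R \otimes_{C_K} A_R$ corresponds (after base change along $R \to R$) to an $R$-algebra, hence a $K$-algebra, homomorphism $R \to R$; one checks it commutes with $\sigma$ (the generators of $A_R$ were $\sigma$-constant) and is bijective (its inverse comes from the antipode). Finally, one checks this identification is a group isomorphism, i.e., composition of automorphisms corresponds to the group law of $G_R$, which is precisely what coassociativity encodes.

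**Main obstacle.** The delicate point — and the reason the hypothesis $C_R = C_K$ (condition (ii)) is essential rather than cosmetic — is the structural isomorphism $R \otimes_K R \cong R \otimes_{C_K} A_R$. In the classical Picard-Vessiot setting this uses simplicity of $R$ (no nontrivial $\sigma$-ideals) together with $C_K$ algebraically closed; here $R$ is only assumed to be a \emph{weak} Picard-Vessiot ring, need not be simple, and $C_K$ need not be algebraically closed. So I would have to argue more carefully: show that the $\sigma$-constants of $R \otimes_K R$ are exactly $A_R$ (this is where $C_R = C_K$ is used — first for $R$ itself, then one needs that $R \otimes_K R$, localized appropriately, still has constants equal to $A_R$), and that the natural map $R \otimes_{C_K} A_R \to R \otimes_K R$ is an isomorphism. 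The injectivity direction is a faithful-flatness/linear-disjointness argument over $C_K$; surjectivity amounts to showing $1 \otimes Z$ has entries in the subring generated by $Z \otimes 1$ and $A_R$, which is immediate from $(1 \otimes Z) = (Z \otimes 1)\cdot\big((Z\otimes 1)^{-1}(1 \otimes Z)\big)$. I expect the bookkeeping around constants of tensor products (Proposition~\ref{p2} is presumably the relevant input) to be where the real work lies; once that isomorphism is in hand, the Hopf-algebra formalism and the identification with $G_R(C_K)$ are formal.
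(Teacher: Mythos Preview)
Your opening observation --- that any $K$-difference automorphism $\tau$ is determined by the matrix $Z^{-1}\tau(Z)\in\GL_n(C_K)$ --- is correct and is exactly how the paper begins as well. The divergence is in how you propose to show that the image in $\GL_n(C_K)$ is the set of $C_K$-points of a closed subgroup.

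You commit to the torsor route: establish $R\otimes_K R \cong R\otimes_{C_K} A_R$ and read off the Hopf algebra $A_R$. You correctly flag that this isomorphism is the crux and that its usual proof uses \emph{simplicity} of $R$; but you do not actually close that gap, and I do not see how to. For a weak Picard-Vessiot ring the ideal $q$ need not be maximal among $\sigma$-ideals, so Lemma~\ref{lem1.11} (which is the engine behind the torsor isomorphism in the simple case) is unavailable, and there is no evident reason why $R\otimes 1$ and $A_R$ should be linearly disjoint over $C_K$ inside $R\otimes_K R$, nor why the $\sigma$-constants of $R\otimes_K R$ should be exactly $A_R$. Your appeal to Proposition~\ref{p2} does not help: that result controls constants after base change by a \emph{constant field extension}, not constants of $R\otimes_K R$.

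The paper sidesteps this entirely with a more elementary argument. Writing $R=K[Y,\det(Y)^{-1}]/q$, it identifies, for any $C_K$-algebra $B$, the group $\calG_R(B)$ with the set of $M\in\GL_n(B)$ such that the substitution $Y\mapsto YM$ sends $q$ into the ideal $(q)$ of $K[Y,\det(Y)^{-1}]\otimes_{C_K}B$. Expanding each $\phi_{M_0}(q_j)\bmod q$ in a fixed $C_K$-basis of $R$ yields finitely many polynomial conditions on the entries of $M_0$, and the quotient $U=C_K[X_{s,t},\det(X)^{-1}]/I$ by these conditions represents the functor. No torsor isomorphism, no simplicity, no control of constants in $R\otimes_K R$ is needed; the hypothesis $C_R=C_K$ is used only for the initial embedding into $\GL_n(C_K)$. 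If you want to salvage your approach, you would need an independent argument for the linear disjointness step that does not invoke simplicity --- but it is cleaner to switch to the stabilizer-of-$q$ description.
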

 \begin{proof} We will define the group $G_R$ by producing a representable functor from the category of commutative $C_K$-algebras to the category of groups (c.f., \cite{waterhouse}).\\[0.1in]
 First, we may write $R = K[Y_{i,j},\frac{1}{\det(Y)}]/ q$ as the quotient of a difference ring $K[Y_{i,j},\frac{1}{\det(Y)}]$, where $Y=\{Y_{i,j}\}$ is an $n\times n$ matrix of indeterminates with $\sigma Y = AY$, by a $\sigma$-ideal $q$.  Let $C= C_K$. For any $C$-algebra $B$, one defines the difference rings $K\otimes_{C}B$ and $R\otimes_{C} B$ with automorphism $\sigma(f\otimes b) = \sigma(f)\otimes b$ for $f \in K$ or $R$.  In both cases, the ring of constants is $B$. We define the functor $\calG_{R}$ as follows: the group $\calG_{R}(B)$ is the group of $K\otimes_{C} B$-linear automorphisms of $R\otimes_{C} B$ that commute with $\sigma$. One can show that $\calG_{R}(B)$ can be identified with the group of matrices $M \in \GL_n(B)$ such that the difference automorphism $\phi_M$ of $R\otimes_{C} B$, given by $(\phi_M Y_{i,j}) = (Y_{i,j})M$, has the property that $\phi_M(q) \subset (q)$ where $(q)$ is the ideal of $K[Y_{i,j}, \frac{1}{\det(Y_{i,j})}]\otimes_{C}B$ generated by $q$. \\[0.1in]
We will now show that $\calG_{R}$ is representable.  Let $X_{s,t}$
be new indeterminates and  let $M_0 = (X_{s,t})$. Let $q = (q_1,
\ldots , q_r)$ and write $\sigma_{M_0}(q_i)\mod(q) \in
{R}\otimes_{C} {C}[X_{s,t}, \frac{1}{\det(X_{s,t})}]$ as a finite
sum
\[ \sum_i C(M_0,i,j)e_i\mbox{ with all } C(M_0,i,j) \in {C}[X_{s,t}, \frac{1}{\det(X_{s,t})}] \ ,\]
where $\{e_i\}_{i \in I}$ is a ${C}$-basis of $R$.  Let $I$ be the the
ideal in ${C}[X_{s,t}, \frac{1}{\det(X_{s,t})}]$ generated by all the
$C(M_0, i,j)$.  We will show that 
\[U:= {C}[X_{s,t}, \frac{1}{\det(X_{s,t})}]/I\] 
represents $\calG_{R}$.\\[0.1in]
Let $B$ be a ${C}$-algebra and $\phi \in \calG_{R}(B)$ identified
with $\phi_M$ for some $M \in \GL_n(B)$. One defines the
${C}$-algebra homomorphism 
\[\Phi: {C}[X_{s,t},
\frac{1}{\det(X_{s,t})}] \rightarrow B,\qquad (X_{s,t}) \mapsto M.\]
 The
condition on $M$ implies that the kernel of $\Phi$ contains $I$.
This then gives a unique ${C}$-algebra homomorphism $\Psi:U
\rightarrow B$ with $\Psi(M_0\mod I) = M$. The Yoneda Lemma can now
be used to show that $G_R =$ Spec($U$) is a linear algebraic group
(see Appendix B, p.~382 of \cite{PuSi2003} to see how this is
accomplished or Section 1.4 of \cite{waterhouse}).\end{proof}
We will refer to $G_R$ as the {\em Galois group} of $R$. When $R$ is
a Picard-Vessiot extension of $K$, we have the usual situation.
We are going to compare the groups associated with  a Picard-Vessiot
extension and weak Picard-Vessiot field extensions for the same
equation over different base fields.  We will first show that
extending a Picard-Vessiot ring by constants yields a Picard-Vessiot
ring whose associated group  is isomorphic to the original group
over the new constants.  In the differential case and when the new
constants are algebraic over the original constants this appears in
Dyckerhoff's work (\cite{dyckerhoff}, Proposition 1.18 and Theorem
1.26).   Our proof is in the same spirit but without appealing to
descent techniques. We will use Lemma 1.11 of \cite{PuSi}, which we
state here  for the convenience of the reader:
\noindent \begin{lem} \label{lem1.11} Let $R$ be a Picard-Vessiot
ring over a field $k$ with $C_R = C_k$\footnote{The hypothesis
$C_R=C_k$ is not explicitly stated in the statement of this result
in \cite{PuSi} but is assumed in the proof.} and $A$ be a
commutative algebra over $C_k$.  The action of $\sigma$ on $A$ is
supposed to be the identity.  Let $N$ be an ideal of $R\otimes_{C_k}
A$ that is invariant under $\sigma$. Then $N$ is generated by the
ideal $N\cap A$ of $A$. \end{lem}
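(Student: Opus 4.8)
The plan is to run the classical ``minimal length'' argument for ideals of a tensor product over a field, the two essential inputs being that the Picard--Vessiot ring $R$ is \emph{simple} as a difference ring and that $C_R=C_k$. Write $C=C_k$ and identify $A$ with $1\otimes A\subseteq R\otimes_C A$ (an inclusion, since $C$ is a field and hence $A$ is free over $C$), so that $N_0:=N\cap A$ is an ideal of $A$. I would first reduce to the case $N_0=0$: the ideal of $R\otimes_C A$ generated by $N_0$ is exactly the image of $R\otimes_C N_0$ — again using that $C$ is a field — with quotient $R\otimes_C(A/N_0)$; the image $\bar N$ of $N$ in this quotient is still a $\sigma$-invariant ideal, and an easy verification shows $\bar N\cap(A/N_0)=0$. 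Since ``$N$ is generated by $N_0$'' is equivalent to ``$\bar N=0$'', replacing $A$ by $A/N_0$ (still a $C$-algebra on which $\sigma$ acts trivially) reduces the lemma to the implication: if $N\cap A=0$, then $N=0$.

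Assume now $N\neq 0$ and fix a $C$-basis $\{a_i\}_{i\in I}$ of $A$, so $R\otimes_C A=\bigoplus_{i\in I}R\,(1\otimes a_i)$ and every element has a unique finite expansion $\sum_i r_i\otimes a_i$; call the number of nonzero $r_i$ its \emph{length}. Choose $x\in N\setminus\{0\}$ of minimal length $m$, and after relabelling the basis write $x=\sum_{j=1}^m r_j\otimes a_j$ with every $r_j\neq 0$. Set $J=\{\,r\in R : r\otimes a_1+\sum_{j=2}^m s_j\otimes a_j\in N\text{ for some }s_2,\dots,s_m\in R\,\}$. A direct check shows $J$ is an ideal of $R$; it is $\sigma$-stable because $\sigma$ fixes each $a_i$; and it is nonzero because $r_1\in J$. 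As $R$ is a simple difference ring, $J=R$, so $1\in J$: there exist $s_2,\dots,s_m\in R$ with $y:=1\otimes a_1+\sum_{j=2}^m s_j\otimes a_j\in N$.

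To conclude, apply $\sigma$ and subtract: $\sigma(y)-y=\sum_{j=2}^m(\sigma(s_j)-s_j)\otimes a_j$ lies in $N$ and has length at most $m-1$, so minimality of $m$ forces it to be $0$; hence $\sigma(s_j)=s_j$, i.e.\ $s_j\in C_R=C_k=C$, for every $j$. Then $s_j\otimes a_j=1\otimes(s_j a_j)$, so $y=1\otimes\bigl(a_1+\sum_{j=2}^m s_j a_j\bigr)\in 1\otimes A=A$; since $y\in N$ this gives $y\in N\cap A=0$, contradicting that the $a_1$-coefficient of $y$ equals $1\neq 0$. Hence $N=0$.

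I expect the only delicate point to be the opening reduction, where one must use that $C$ is a field to know that $A\hookrightarrow R\otimes_C A$ and $R\otimes_C N_0\hookrightarrow R\otimes_C A$ are injective and that the latter image is the ideal generated by $N_0$. The conceptual heart, by contrast, is the observation that, for a fixed target basis vector, the ``leading coefficients'' of elements of $N$ form a $\sigma$-stable ideal of $R$; this is precisely where simplicity of the Picard--Vessiot ring — hence the standing hypothesis of the lemma — is invoked, together with $C_R=C_k$ in the final step.
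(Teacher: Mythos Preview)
The paper does not actually prove this lemma: it is quoted verbatim from \cite{PuSi} (Lemma 1.11) ``for the convenience of the reader'', with no argument supplied. Your proof is correct and is essentially the classical one given in \cite{PuSi}: reduce to $N\cap A=0$, take a nonzero element of $N$ of minimal support with respect to a $C_k$-basis of $A$, use simplicity of $R$ to normalise one coefficient to $1$, and then apply $\sigma$ and subtract to force the remaining coefficients into $C_R=C_k$, landing the element in $A$ and yielding the contradiction. So there is nothing to compare against in this paper, but your write-up matches the source argument and exhibits clearly the two places where the hypotheses (simplicity of $R$, and $C_R=C_k$) are used.
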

 \begin{prop}\label{p2} Let $k \subset K$ be  difference fields of characteristic zero  and $K = k(C_K)$. Let $R$ be a Picard-Vessiot ring over $k$ with $C_{R} = C_k$ for the equation $\sigma X = AX, A\in \GL_n(k)$.   If
$R = k[Y,\frac{1}{\det(Y)}]/q$ where $Y$ is an $n\times n$ matrix of
indeterminates, $\sigma Y = AY$ and $q$ is a maximal $\sigma$-ideal,
then $S = K[Y,\frac{1}{\det(Y)}]/qK$ is a Picard-Vessiot extension
of  $K$ for the same equation.
Furthermore, $C_S = C_K$.\end{prop}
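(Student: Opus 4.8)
The plan is to realise $S$ as a localisation of a ring to which Lemma~\ref{lem1.11} applies directly, and to extract both $\sigma$-simplicity and the constants statement from that. The first step is the observation that, $k$ being a field, $-\otimes_k K$ is exact, so $K[Y,1/\det(Y)] = k[Y,1/\det(Y)]\otimes_k K$ and hence $S$ is canonically the (nonzero) difference ring $R\otimes_k K$; with $Z$ the image of $Y$ we have $Z\in\GL_n(S)$, $\sigma Z = AZ$ and $S = K[Z,1/\det(Z)]$, so the only thing left for the first assertion is to show $S$ is $\sigma$-simple, i.e.\ that $qK$ is a maximal $\sigma$-ideal.

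Next I would set $T := R\otimes_{C_k}C_K$, with $C_K$ regarded as a $C_k$-algebra on which $\sigma$ acts trivially, and apply Lemma~\ref{lem1.11} twice: to $R$ over $k$, and to $k$ over $k$ (a Picard-Vessiot ring for the trivial equation). Each time a $\sigma$-ideal is generated by its intersection with $C_K$, which is $(0)$ or $C_K$; so $T$ and $k\otimes_{C_k}C_K$ are $\sigma$-simple. The multiplication map $k\otimes_{C_k}C_K\to K$ is a morphism of difference rings onto the subring $k[C_K]$ with a proper kernel, hence an isomorphism $k\otimes_{C_k}C_K\cong k[C_K]$ by $\sigma$-simplicity of its source; tensoring with $R$ over $k$ exhibits $T$ as the difference subring $R\otimes_k k[C_K]$ of $S = R\otimes_k K$, and since $K = k(C_K)$ is the fraction field of $k[C_K]$ this gives $S = S_0^{-1}T$ with $S_0 := k[C_K]\setminus\{0\}$. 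From here $\sigma$-simplicity of $S$ is routine: a $\sigma$-ideal $I$ of $S$ meets $T$ in a $\sigma$-ideal, which is $(0)$ or $T$; in the second case $1\in I$, in the first $I = S_0^{-1}(I\cap T) = (0)$ (the standard extension--contraction fact for localisations). Hence $S$ is a Picard-Vessiot ring over $K$.

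For $C_S = C_K$ I would first note, using a $C_k$-basis $\{b_\alpha\}$ of $C_K$ with $b_0 = 1$, that $T = \bigoplus_\alpha R\,b_\alpha$ with $\sigma$ acting coordinatewise, so $C_T = \bigoplus_\alpha C_R\,b_\alpha = C_K$ because $C_R = C_k$. One inclusion being clear, take $x\in C_S$ and write $x = a/s$ with $a\in T$, $s\in S_0$; the ``conductor'' $\mathfrak{a} := \{t\in T : tx\in T\}$ is an ideal of $T$, it contains $s$ (as $sx = a$), and it is $\sigma$-stable since $\sigma(T) = T$ and $\sigma(x) = x = \sigma^{-1}(x)$. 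So $\mathfrak{a}$ is a nonzero $\sigma$-ideal of the $\sigma$-simple ring $T$, forcing $\mathfrak{a} = T$ and hence $x\in T$; then $x\in C_T = C_K$.

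The delicate point is exactly this last equality. The standard argument that a finitely generated $\sigma$-simple algebra over a field has no new constants uses that the constant field is algebraically closed --- precisely what is being dropped here --- so it must be replaced, and the replacement is the presentation $S = S_0^{-1}T$ with $T$ $\sigma$-simple and visibly $C_T = C_K$. That presentation (and the $\sigma$-simplicity of $S$ along with it) hinges on the small identification $k\otimes_{C_k}C_K\cong k[C_K]$ and on the hypothesis $K = k(C_K)$; drop the latter and $R\otimes_k K$ will in general be neither $\sigma$-simple nor free of new constants.
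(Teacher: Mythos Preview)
Your argument is correct and follows essentially the same route as the paper: both identify $T=R\otimes_{C_k}C_K$, use Lemma~\ref{lem1.11} to see that $T$ is $\sigma$-simple, embed $T$ into $S$ and recognise $S$ as a localisation of $T$ at the nonzero elements of $k[C_K]$, and then run the conductor-ideal argument to control the constants. The only cosmetic difference is that you invoke Lemma~\ref{lem1.11} a second time (with $k$ as the trivial Picard--Vessiot ring) to obtain $k\otimes_{C_k}C_K\cong k[C_K]$, whereas the paper gets the embedding $T\hookrightarrow S$ in one stroke by noting that the multiplication map $\psi:R\otimes_{C_k}C_K\to S$ is a nonzero $\sigma$-morphism out of a $\sigma$-simple ring; either way yields the same image $R'\subset S$ and the rest proceeds identically.
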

 \begin{proof} First note that the ideal $qK\neq K[Y,\frac{1}{\det(Y)}].$  Secondly, Lemma~\ref{lem1.11} states that for $R$ as above and $A$ a commutative $C_k$ algebra with identity, any $\sigma$-ideal $N$ of $R\otimes_{C_k}A$ (where the action of $\sigma$ on $A$ is trivial) is generated by $N\cap A$. This implies that the difference ring $R\otimes_{C_k}C_K$ is simple. Therefore the map $\psi: R\otimes_{C_k}C_K \rightarrow S = K[Y,\frac{1}{\det(Y)}]/(q)K$  where $\psi(a\otimes b) = ab$ is injective. Let $R'$ be the image of $\psi$. One sees that any element of $S$ is of the form $\frac{a}{b}$ for some $a \in R', b \in k[C_k]\subset R'$.  Therefore any ideal $I$ in $S$ is generated by $I\cap R'$ and so $S$ is simple. \\[0.1in]
For any constant  $c\in S$, the  set $J = \{a \in R' \ | \ ac\in
R'\} \subset R'$  is a nonzero $\sigma$-ideal so  $c \in R'$. Since
the constants of $R'$ are $C_K$, this completes the
proof.\end{proof}
%
\begin{cor} \label{cor1} Let $R$ and $S$ be as in Proposition~\ref{p2}.
If $G_{R}$ and $G_S$ are the Galois groups associated with these
rings as in Proposition~\ref{p1}, then $G_{R}$ and $G_S$ are
isomorphic over $C_K$.\end{cor}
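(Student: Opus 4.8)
The plan is to identify the two groups through their functors of points. Recall from the proof of Proposition~\ref{p1} that, fixing the presentation $R=k[Y,\frac1{\det(Y)}]/q$, for any $C_k$-algebra $B$ the group $\calG_R(B)$ is the set of $M\in\GL_n(B)$ for which $\phi_M((q))\subseteq(q)$ inside $D_B:=k[Y,\frac1{\det(Y)}]\otimes_{C_k}B$, where $\phi_M$ is the automorphism of $D_B$ that fixes $k\otimes_{C_k}B$ and sends the matrix $Y$ to $YM$. Running the same construction for the Picard-Vessiot ring $S$ over $K$ --- which is a weak Picard-Vessiot ring, since $C_S=C_K$ by Proposition~\ref{p2} --- with the presentation $S=K[Y,\frac1{\det(Y)}]/qK$, gives, for a $C_K$-algebra $B$, the group $\calG_S(B)$ as the set of $M\in\GL_n(B)$ with $\phi_M((qK))\subseteq(qK)$ inside $E_B:=K[Y,\frac1{\det(Y)}]\otimes_{C_K}B$. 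Since $C_k\subseteq C_K$, every $C_K$-algebra is a $C_k$-algebra, and the restriction of $\calG_R$ to $C_K$-algebras is the functor of points of $G_R\times_{C_k}C_K$. Hence it is enough to show that for every $C_K$-algebra $B$ these two subsets of $\GL_n(B)$ coincide, compatibly with $C_K$-algebra homomorphisms; Yoneda then yields $G_R\times_{C_k}C_K\cong G_S$ over $C_K$, which is the assertion.

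Next I would assemble the structural input. By Proposition~\ref{p2} the map $\psi\colon R\otimes_{C_k}C_K\to S$ is injective, so its restriction identifies $k_K:=k\otimes_{C_k}C_K$ with the subring $k[C_K]$ of the field $K=k(C_K)$; in particular $k_K$ is an integral domain with fraction field $K$. As $R$ is free as a $k$-module, $R\otimes_{C_k}C_K$ is free over $k_K$, whence $R\otimes_{C_k}B=(R\otimes_{C_k}C_K)\otimes_{C_K}B$ is free over $k_K$ for every $C_K$-algebra $B$. Write $T:=k_K\setminus\{0\}$. Then three facts hold: (a) $D_B=(k_K\otimes_{C_K}B)[Y,\frac1{\det(Y)}]$ is $k_K$-free, $E_B=T^{-1}D_B$, and under this identification $(qK)$ is the extension $T^{-1}(q)$ of $(q)$; (b) the automorphism $\phi_M$ of $D_B$ fixes $T$ pointwise, hence extends uniquely to the corresponding automorphism $\phi_M$ of $E_B$; (c) since $D_B/(q)=R\otimes_{C_k}B$ is $k_K$-free, every $t\in T$ is a non-zero-divisor modulo $(q)$, so $(q)$ is saturated with respect to $T$, i.e.\ $(qK)\cap D_B=(q)$.

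Granting (a)--(c), the equivalence is immediate. If $\phi_M((q))\subseteq(q)$ then, using that $\phi_M$ fixes $T$, $\phi_M((qK))=\phi_M(T^{-1}(q))=T^{-1}\phi_M((q))\subseteq T^{-1}(q)=(qK)$. Conversely, if $\phi_M((qK))\subseteq(qK)$, then $\phi_M((q))\subseteq\phi_M((qK))\cap D_B\subseteq(qK)\cap D_B=(q)$, where the first inclusion uses that $\phi_M$ maps $D_B$ into $D_B$. Compatibility of these identifications with homomorphisms $B\to B'$ of $C_K$-algebras is clear from the construction, so the proof is complete. The one step that is not purely formal is (c): the assertion that passing from $(q)$ to its extension $(qK)$ loses no information. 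This relies on $k_K$ being an integral domain, which is precisely what the injectivity of $\psi$ in Proposition~\ref{p2} provides (together with the freeness of $R$ over $k$), and I expect it to be the main point to get right.
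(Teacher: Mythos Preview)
Your approach is essentially that of the paper: both arguments show that the base-changed functor $B\mapsto\calG_R(B)$ (for $C_K$-algebras $B$) coincides with $\calG_S$, by exploiting that $S$ is obtained from the image of $\psi\colon R\otimes_{C_k}C_K\hookrightarrow S$ by inverting the nonzero elements of $k[C_K]\simeq k_K$; Yoneda then gives the isomorphism of coordinate rings. The paper phrases this at the level of automorphism groups of $R\otimes_{C_k}B$ and $S\otimes_{C_K}B$, while you unwind the matrix description from Proposition~\ref{p1}, but the content is the same.

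One correction to your justification of (c): the assertion that $R\otimes_{C_k}B=(R\otimes_{C_k}C_K)\otimes_{C_K}B$ is \emph{free} over $k_K$ is not true for an arbitrary $C_K$-algebra $B$ (take $R=k$: then you are claiming $k_K\otimes_{C_K}B$ is $k_K$-free, which fails whenever $B$ is not $C_K$-free). What you actually need, and what does hold, is that every $t\in T$ acts as a non-zero-divisor on $R\otimes_{C_k}B$. This follows because multiplication by $t$ is injective on $R\otimes_{C_k}C_K$ (which \emph{is} $k_K$-free, with $k_K$ a domain), and tensoring an injective map with $B$ over the field $C_K$ preserves injectivity by flatness. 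With this adjustment your argument for $(qK)\cap D_B=(q)$ goes through unchanged.
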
 
\begin{proof} We are considering $G_{R}$ as the functor from $C_k$
  algebras $A$ to groups defined by $G_{R}(A) :=  Aut(R\otimes_{C_k}A)$ where $Aut(..)$ is the group of difference  $k\otimes A$-automorphisms. Let $T_{R}$ be the finitely generated $C_k$-algebra representing $G_{R}$ (i.e., the coordinate ring of the group).  Similarly, let $T_S$ be the $C_K$-algebra representing $G_S$. We define a new functor $F$ from $C_K$-algebras to groups as $F(B):= Aut((R\otimes_{C_k}C_K) \otimes_{C_K} B)$. One checks that $F$ is also a representable functor represented by $T_{R}\otimes_{C_k}C_K$. Using the embedding $\psi$ of the previous proof, one sees that $F(B) = Aut(S\otimes_{C_K}B) = G_R(B)$ for any $C_K$-algebra $B$. The Yoneda Lemma implies that $T_{R}\otimes_{C_k} C_K \simeq T_S$.\end{proof}
In Proposition~\ref{p3} we will compare Picard-Vessiot rings with
weak Picard-Vessiot fields for the same difference equation.  To do
this we need the following lemma.  A version of this in the
differential case appears as Lemma 1.23 in \cite{PuSi2003}.
\begin{lem}\label{lem1} Let $L$ be a difference field. Let $Y=(Y_{i,j})$ be  and $n\times n$ matrix of indeterminates and extend $\sigma$ to $L[Y_{i.j}, \frac{1}{\det(Y)}]$ by setting $\sigma(Y_{i,j}) = Y_{i,j}$. The map $I \mapsto (I) = I\cdot  L[Y_{i.j}, \frac{1}{\det(Y)}]$  from the set of ideals in $C_L[Y_{i.j}, \frac{1}{\det(Y)}]$ to the set of ideals of $L[Y_{i.j}, \frac{1}{\det(Y)}]$ is a bijection.\end{lem}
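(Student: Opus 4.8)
The statement is a "no new constants for polynomial rings" fact: adjoining indeterminates $Y_{i,j}$ (with $\sigma$ acting trivially on them) to a difference field $L$ does not change the relationship between ideals upstairs and downstairs, provided we only look at the "constant-coefficient" ideals, i.e.\ those extended from $C_L[Y_{i,j},\frac{1}{\det(Y)}]$. I would set $C = C_L$ and abbreviate $P_C := C[Y_{i,j},\frac{1}{\det(Y)}]$ and $P_L := L[Y_{i,j},\frac{1}{\det(Y)}]$, so $P_L = L \otimes_C P_C$. The map in question is $I \mapsto I \cdot P_L$. I would prove it is a bijection by exhibiting an inverse: the map $J \mapsto J \cap P_C$, and checking the two composites are the identity.

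\textbf{Key steps.} First, the easy direction: for an ideal $I \subseteq P_C$, show $(I \cdot P_L) \cap P_C = I$. The containment $\supseteq$ is trivial. For $\subseteq$, pick a $C$-basis $\{\lambda_\alpha\}$ of $L$ with $\lambda_0 = 1$; then $P_L = \bigoplus_\alpha \lambda_\alpha P_C$ as a $P_C$-module, so $I \cdot P_L = \bigoplus_\alpha \lambda_\alpha I$, and intersecting with $P_C = \lambda_0 P_C$ gives exactly $I$. This is pure linear algebra over a field and uses nothing about $\sigma$. The harder direction is: for an ideal $J \subseteq P_L$, show $(J \cap P_C) \cdot P_L = J$, i.e.\ that $J$ is generated by its constant part. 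Here the hypothesis that $\sigma$ fixes the $Y_{i,j}$ is essential, and the natural route is to invoke Lemma~\ref{lem1.11} (the Picard--Vessiot version of the ``constants go through tensor products'' lemma of \cite{PuSi}): take the trivial difference equation $\sigma X = X$ (i.e.\ $A = I_n$), whose Picard--Vessiot ring over $L$ is just... well, one must be slightly careful, but the statement we need is precisely that of Lemma~\ref{lem1.11} applied with the roles of the two tensor factors swapped --- $L$ in place of $R$, and the $C$-algebra $A := P_C$ carrying the trivial $\sigma$-action. Then any $\sigma$-invariant ideal of $L \otimes_C P_C = P_L$ is generated by its intersection with $P_C$. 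But here \emph{every} ideal of $P_L$ is $\sigma$-invariant, since $\sigma$ acts as the identity on $P_L$ (it fixes $L$? no --- $\sigma$ is an automorphism of $L$). So I should be cautious: $\sigma$ does \emph{not} act trivially on $P_L$; it acts via the automorphism on $L$ and trivially on the $Y$'s. Hence not every ideal of $P_L$ is $\sigma$-invariant, and the correct formulation is that Lemma~\ref{lem1.11} applies verbatim --- with $R := L$ viewed as a (trivial) Picard--Vessiot ring over the difference field $C_L$... but $C_L$ has trivial $\sigma$, so $L$ over $C_L$ is not generally a Picard--Vessiot ring either.

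\textbf{The main obstacle, and how to get around it.} The real content, and where I expect the friction, is matching the hypotheses of Lemma~\ref{lem1.11}. I see two clean options. Option (a): cite the \emph{differential} analogue, Lemma 1.23 of \cite{PuSi2003}, which the excerpt explicitly flags as a model, and transcribe its proof to the difference setting --- that proof does not actually need a nontrivial Picard--Vessiot ring, only the field/indeterminate structure, and it establishes both that $(J\cap P_C)P_L = J$ for all $J$ and the bijection. Option (b): prove the nontrivial inclusion directly by a linear-algebra-with-Gröbner-flavor argument: given $f \in J$, write $f = \sum_\alpha \lambda_\alpha c_\alpha$ with $c_\alpha \in P_C$ and $\{\lambda_\alpha\}$ a $C$-basis of $L$; I want to show each $c_\alpha \in J$. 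Take $f \in J$ with the fewest nonzero $\lambda_\alpha$-components, scale so that $c_{\alpha_0} $ is, say, involved, apply $\sigma$ and form $\sigma(f) \cdot (\text{something}) - f \cdot \sigma(\text{something})$ to kill one term --- wait, this needs $J$ to be $\sigma$-stable, which it is not. So Option (b) as stated fails and \emph{one genuinely needs} the indeterminates to be $\sigma$-fixed in a way that only helps when $J$ itself is $\sigma$-stable. Re-reading the statement: it claims the bijection for \emph{all} ideals, with no $\sigma$-invariance hypothesis, so the proof must be the pure field-theory argument (Option (a) / \cite{PuSi2003} Lemma 1.23): $P_C \hookrightarrow P_L$ is faithfully flat (it's a base change along the field extension $C \hookrightarrow L$), and for faithfully flat extensions the contraction--extension maps on ideals compose correctly on one side, while the other composite identity $(JP_C^e)? $ needs that $P_L/P_C$ is free, giving $J = (J\cap P_C)P_L$ via the direct-sum decomposition $J = \bigoplus_\alpha (\lambda_\alpha P_C \cap J)$ --- and here one shows $\lambda_\alpha P_C \cap J = \lambda_\alpha(P_C\cap J)$ using that $J$ is an ideal closed under multiplication by $P_C$ together with a minimal-length / leading-term argument on $C$-basis expansions. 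That leading-term bookkeeping is the one genuinely technical step; everything else is formal. So my proposal: reduce to the two composite-identity checks, dispatch $(IP_L)\cap P_C = I$ by the free-module decomposition, and for $(J\cap P_C)P_L = J$ run the minimal-support argument on expansions $f = \sum \lambda_\alpha c_\alpha$ to peel off basis components one at a time, exactly as in \cite{PuSi2003}, Lemma 1.23.
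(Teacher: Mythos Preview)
There is a genuine gap in your reasoning. You correctly observe that the minimal-support argument (your Option (b)) needs $J$ to be $\sigma$-stable, and you then abandon it because the lemma, as stated, seems to claim a bijection onto \emph{all} ideals of $P_L$. You conclude that the argument of \cite{PuSi2003}, Lemma~1.23, must be a ``pure field-theory'' argument independent of the operator, and propose to run a minimal-support peel-off that does not use $\sigma$. But Lemma~1.23 of \cite{PuSi2003} is stated for \emph{differential} ideals, and its proof uses the derivation in exactly the way your Option (b) would use $\sigma$; there is no operator-free version. Indeed the claim is simply false for arbitrary ideals: take $L=\CX(x)$ with $\sigma(x)=x+1$ (so $C_L=\CX$) and $n=1$; the ideal $J=(Y-x)\subset \CX(x)[Y,Y^{-1}]$ has $J\cap \CX[Y,Y^{-1}]=(0)$, since a Laurent polynomial over $\CX$ vanishing at the transcendental element $x$ is zero, so $(J\cap P_C)P_L=(0)\neq J$. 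More structurally, every extended ideal $I\cdot P_L$ is $\sigma$-stable because $\sigma$ is the identity on $P_C$, so surjectivity onto all ideals is hopeless whenever $\sigma$ is nontrivial on $L$.

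The paper's statement is therefore slightly imprecise; the intended target is the set of $\sigma$-ideals of $P_L$, and this is all that is used in Proposition~\ref{p3}. The paper's proof is precisely your Option (b), with one cosmetic difference worth adopting: it expands $f\in J$ over a $C_L$-basis $\{e_i\}$ of $P_C$ with coefficients $f_i\in L$ (rather than your $C_L$-basis $\{\lambda_\alpha\}$ of $L$ with coefficients in $P_C$), which makes the action of $\sigma$ transparent since $\sigma(e_i)=e_i$. After normalizing $f_{i_1}=1$ and picking $f_{i_2}\in L\setminus C_L$, both $f-\sigma(f)$ and $\sigma(f_{i_2}^{-1}f)-f_{i_2}^{-1}f$ lie in $J$ (here $\sigma$-stability of $J$ is used) and have strictly smaller support, hence lie in $(I)$ by induction; the identity
\[
(\sigma(f_{i_2}^{-1})-f_{i_2}^{-1})\,f \;=\; \sigma(f_{i_2}^{-1})\bigl(f-\sigma(f)\bigr)\;+\;\bigl(\sigma(f_{i_2}^{-1}f)-f_{i_2}^{-1}f\bigr)
\]
then exhibits a nonzero $L$-scalar multiple of $f$ in $(I)$. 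Your instinct was correct; the fix is to read the lemma for $\sigma$-ideals and keep Option (b), not to search for a $\sigma$-free proof.
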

\begin{proof} One easily checks that $(I) \cap C_L[Y_{i.j}, \frac{1}{\det(Y)}] = I$. Now, let $J$ be an ideal of $L[Y_{i.j}, \frac{1}{\det(Y)}]$ and let $I = J \cap C_L[Y_{i.j}, \frac{1}{\det(Y)}] $. Let $\{e_i\}$ be a basis of $C_L[Y_{i.j}, \frac{1}{\det(Y)}]$ over $C_L$. Given $f \in L[Y_{i.j}, \frac{1}{\det(Y)}]$, we may write $f$ uniquely as $f = \sum f_ie_i, \ f_i \in L$. Let $\ell(f)$ be the number of $i$ such that $f_i \neq 0$.  We will show, by induction on $\ell(f)$, that for any $f \in J$, we have $f\in (I)$. If $\ell(f) = 0, 1$ this is trivial.  Assume $\ell(f) >1$.  Since $L$ is a field, we can assume that there exists an $i_1$ such that $f_{i_1} = 1$. Furthermore, we may assume that there is an $i_2 \neq i_1$ such that $f_{i_2} \in L\backslash C_L$.  We have $\ell(f - \sigma(f)) < \ell(f)$ so $\sigma(f) - f \in (I)$.  Similarly, $\sigma(f_{i_2}^{-1} f) - f_{i_2}^{-1} f \in (I)$.  Therefore, $(\sigma(f_{i_2}^{-1}) - f_{i_2}^{-1}) f = \sigma(f_{i_2}^{-1})(f-\sigma(f)) + (\sigma(f_{i_2}^{-1} f) - f_{i_2}^{-1} f) \in (I)$. This implies that $f \in (I)$. \end{proof}
The following is a version of Proposition 1.22 of \cite{PuSi2003}
modified for difference fields  taking into account the possibility
that the constants are not algebraically closed.

\begin{prop} \label{p3} Let $K$ be a difference field with constants $C$ and let $A \in \GL_n(K)$. Let $S = K[U,\frac{1}{\det(U)}],  \ U \in \GL_n(S), \ \sigma(U) = AU$ be a Picard-Vessiot extension of $K$ with $C_S = C_k$ and  let  $L = K(V), \ V \in GL_n(L), \ \sigma(V) = AV$ be a weak Picard-Vessiot field extension of $K$. Then there exists a $K$-difference embedding $\rho:S \rightarrow L\otimes_C \overline{C}$ where $\overline{C}$ is the algebraic closure of $C$ and $\sigma$ acts on $L\otimes_C \overline{C}$ as $\sigma(v\otimes c) = \sigma(v)\otimes c$.\end{prop}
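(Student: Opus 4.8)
The plan is to build the embedding $\rho$ by comparing the two fundamental solution matrices $U$ and $V$ and controlling the "difference" $Z = V^{-1}U$, whose columns span solutions but which has constant ratios — so the entries of $Z$ want to live among the constants. Concretely, let $S = K[U,1/\det U]$ be the Picard--Vessiot ring and $L = K(V)$ the weak Picard--Vessiot field. First I would form the difference ring $L[U,1/\det U]$ where the $U_{i,j}$ are (at this stage) viewed as indeterminates over $L$ with $\sigma(U) = AU$; equivalently $L\otimes_K S$ up to killing relations. Substituting $U = VZ$ with $Z = (Z_{i,j})$ a new matrix of indeterminates, one has $\sigma(VZ) = AVZ = \sigma(V)\sigma(Z)$, so $\sigma(Z) = Z$ — the $Z_{i,j}$ are constants. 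Thus $L[U,1/\det U] = L[Z,1/\det Z]$ with $\sigma$ acting trivially on the $Z_{i,j}$, and this is exactly the setting of Lemma~\ref{lem1} with $C_L = C$ (recall $L$ is a weak Picard--Vessiot field, so it has no new constants).

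Now the Picard--Vessiot ring $S$ is obtained from $K[U,1/\det U]$ by quotienting by a maximal $\sigma$-ideal, hence $L\otimes_K S$ is $L[Z,1/\det Z]$ modulo the $\sigma$-ideal $\mathfrak{m}$ generated by that maximal ideal. A $K$-difference map $\rho: S \to L\otimes_C \overline{C}$ is the same thing as a $\sigma$-prime ideal of $L\otimes_K S$ whose residue field embeds in $L\otimes_C \overline{C}$ — equivalently, a maximal $\sigma$-ideal of $L[Z,1/\det Z]$ containing the relations of $S$. By Lemma~\ref{lem1}, every $\sigma$-ideal of $L[Z,1/\det Z]$ is of the form $(I)$ for $I$ an ideal of $\overline{C}[Z,1/\det Z]\cap\cdots$ — more precisely, $\sigma$-ideals correspond bijectively to ideals of $C[Z,1/\det Z]$, and since $\sigma$ acts trivially there, \emph{every} ideal of $C[Z,1/\det Z]$ is a $\sigma$-ideal. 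So the $\sigma$-ideals of $L[Z,1/\det Z]$ are in bijection with the ideals of $C[Z,1/\det Z]$; a maximal such $\sigma$-ideal corresponds to a maximal ideal of $C[Z,1/\det Z]$, and its residue field is a finite extension of $C$, hence embeds in $\overline{C}$. This gives $\rho$ with image inside $L\cdot\overline{C} = L\otimes_C\overline{C}$ (using that $C$ is relatively algebraically closed issues are handled by tensoring; one should just take any maximal ideal of the ideal in $C[Z,1/\det Z]$ generated by the relations and note it is proper since $S\neq 0$).

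The main obstacle, and the step I would be most careful about, is verifying that the relevant ideal in $C[Z,1/\det Z]$ is \emph{proper} — i.e., that the relations defining $S$, after the substitution $U = VZ$, do not generate the unit ideal in $L[Z,1/\det Z]$ and hence (via Lemma~\ref{lem1}) not in $C[Z,1/\det Z]$ either. This is where the hypothesis $C_S = C_k$ (no new constants in the Picard--Vessiot ring) and the simplicity of $S$ get used: one shows $L\otimes_K S \neq 0$ and that it has a $\sigma$-prime ideal, by first checking that $L \otimes_K (\text{fraction structure of } S)$ makes sense, or more directly by observing $U = VZ$ is invertible so the ring $L[Z,1/\det Z]$ is nonzero and the ideal generated by a proper ideal stays proper after the flat-looking passage $C[Z,1/\det Z] \to L[Z,1/\det Z]$ (injectivity on ideals is exactly the content of Lemma~\ref{lem1}). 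Once properness is in hand, maximality and the residue-field computation are routine, and checking that $\rho$ is a $K$-algebra map commuting with $\sigma$ is immediate from the construction since $\sigma(Z)=Z$ and $\sigma$ on $L\otimes_C\overline{C}$ is $\sigma\otimes\mathrm{id}$.
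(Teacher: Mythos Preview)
Your approach is essentially the same as the paper's: introduce indeterminates over $L$ satisfying $\sigma(U)=AU$, substitute $Z=V^{-1}U$ to get $\sigma$-fixed coordinates, invoke Lemma~\ref{lem1} to pass to an ideal of $C[Z,1/\det Z]$, pick a maximal ideal there, and map to $L\otimes_C\overline{C}$ via $L\otimes_C(C[Z,1/\det Z]/m)\to L\otimes_C\overline{C}$; the kernel of the composite $K[U,1/\det U]\to L\otimes_C\overline{C}$ is then a proper $\sigma$-ideal containing the maximal $\sigma$-ideal $p$, hence equals $p$. Your worry about properness is legitimate but routine---$L$ is faithfully flat over $K$, so $L\otimes_K S\neq 0$ and hence $(p)$ is proper in $L[Z,1/\det Z]$, whence its contraction to $C[Z,1/\det Z]$ is proper by Lemma~\ref{lem1}; the paper handles this step implicitly.
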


\begin{proof} Let $X = (X_{i,j})$ be an $n\times n$ matrix of indeterminates over $L$ and let $S_0:= K[X_{i,j}, \frac{1}{\det(X)}] \subset L[X_{i,j}, \frac{1}{\det(X)}]$. We define a difference ring structure on $L[X_{i,j}, \frac{1}{\det(X)}]$ by setting $\sigma(X) = AX$ and this gives a difference ring structure on $S_0$. Abusing notation slightly,  we may write $S = S_0/p$ where $p$ is a maximal $\sigma$-ideal of $S_0$.   Define elements $Y_{i,j} \in L[X_{i,j}, \frac{1}{\det(X)}]$ via the formula  $(Y_{i,j}) = V^{-1}(X_{i,j})$.  Note that $\sigma Y_{i,j} = Y_{i,.j}$ for all $i,j$ and that  $L[X_{i,j}, \frac{1}{\det(X)}] = L[Y_{i,j}, \frac{1}{\det(Y)}]$.  Define $S_1 := C[Y_{i,j}, \frac{1}{\det(Y)}]$.  The ideal $p \subset S_0 \subset L[Y_{i,j}, \frac{1}{\det(Y)}]$ generates an ideal $(p)$ in $L[Y_{i,j}, \frac{1}{\det(Y)}]$.  We define $\tilde{p} = (p) \cap S_1$. Let $m$ be a maximal ideal in $S_1$ such that $\tilde{p} \subset m$.  We then have a homomorphism $S_1 \rightarrow S_1/m \rightarrow \overline{C}$. We can extend this to a homomorphism $\psi: L[Y_{i,j}, \frac{1}{\det(Y)}]=L\otimes_CS_1 \rightarrow L\otimes_C\overline{C}$.  Restricting $\psi$ to $S_0$, we have a difference homomorphism $\psi: S_0 \rightarrow L\otimes_C\overline{C}$ whose kernel contains $p$. Since $p$ is a maximal $\sigma$-ideal we have that this kernel is $p$.  Therefore  $\psi$ yields an embedding  $\rho:S = S_0/p\rightarrow L\otimes_C\overline{C}$.\end{proof}

\begin{cor}\label{cor2} Let $K, C, \overline{C}, S, L$ and $\rho$ be as above and let $T = K[V,\frac{1}{\det(V)}]$. Then $\rho$ maps $S\otimes_C\overline{C}$ isomorphically onto $T\otimes_C\overline{C}$. Therefore the Galois group $G_S$ is isomorphic to $G_T$ over $\overline{C}$.\end{cor}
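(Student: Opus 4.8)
The plan is to unwind the construction of $\rho$ in Proposition~\ref{p3} and check that the image is exactly $T\otimes_C\overline C$. Recall $\rho$ arises by restricting to $S_0$ a homomorphism $\psi\colon L[Y_{i,j},\frac1{\det(Y)}]=L\otimes_C S_1\to L\otimes_C\overline C$, where $S_1=C[Y_{i,j},\frac1{\det(Y)}]$, $(Y_{i,j})=V^{-1}(X_{i,j})$, and $\psi$ is induced by a $C$-algebra map $S_1\to S_1/m\to\overline C$ for a maximal ideal $m\supset\tilde p$. First I would extend $\rho$ to $S\otimes_C\overline C$ by $\overline C$-linearity, i.e.\ $\rho(s\otimes c)=\rho(s)c$; this is well defined and is a difference homomorphism since $\sigma$ acts trivially on $\overline C$ on both sides. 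Since $\rho$ is already injective on $S$ (Proposition~\ref{p3}) and $S\otimes_C\overline C$ is a simple difference ring by Lemma~\ref{lem1.11} (applied with $R=S$, $A=\overline C$, using $C_S=C_k=C$), the kernel of the extended $\rho$ is a $\sigma$-ideal not equal to the whole ring, hence $(0)$; so $\rho$ stays injective on $S\otimes_C\overline C$.

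Next I would identify the image. Inside $L\otimes_C\overline C$ we have the matrix $X=(X_{i,j})$ mapping to $\rho(X)=V\cdot\psi(Y)$, where $\psi(Y)$ is the image of $(Y_{i,j})$, a matrix with entries in $\overline C$ (coming from $S_1\to\overline C$) — more precisely with entries in $\overline C\subset L\otimes_C\overline C$. Since $(Y_{i,j})\in\GL_n(S_1)$ its image lies in $\GL_n(\overline C)$, so $\rho(X)=V\bar g$ for some $\bar g\in\GL_n(\overline C)$. Because $S=K[X,\frac1{\det X}]/p$, the image $\rho(S)$ is $K[\rho(X),\frac1{\det\rho(X)}]=K[V\bar g,\frac1{\det(V\bar g)}]$, and therefore $\rho(S)\otimes_C\overline C=K[V\bar g,\frac1{\det}]\cdot\overline C=K[V,\frac1{\det V}]\otimes_C\overline C=T\otimes_C\overline C$, the last equality because $\bar g\in\GL_n(\overline C)$ so adjoining the entries of $V\bar g$ together with all of $\overline C$ gives the same ring as adjoining the entries of $V$ together with $\overline C$. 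Combined with injectivity this shows $\rho\colon S\otimes_C\overline C\xrightarrow{\ \sim\ }T\otimes_C\overline C$ is a difference isomorphism over $K\otimes_C\overline C$ (indeed over $K(\overline C)$).

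For the statement about Galois groups: by definition (Proposition~\ref{p1}) $G_S$ is represented by a functor built from $S$ over $C$, and extending scalars, the functor $B\mapsto\Aut_{\sigma}\!\big((S\otimes_C\overline C)\otimes_{\overline C}B\big)$ on $\overline C$-algebras is represented by $T_S\otimes_C\overline C$, exactly as in the proof of Corollary~\ref{cor1}. The difference isomorphism $\rho$ carries $S\otimes_C\overline C$ onto $T\otimes_C\overline C$ compatibly with $\sigma$ and with $K$, hence induces a natural isomorphism of these two automorphism functors, so $T_S\otimes_C\overline C\simeq T_T\otimes_C\overline C$ by Yoneda; that is, $G_S$ and $G_T$ become isomorphic as algebraic groups over $\overline C$.

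The main obstacle I anticipate is being careful that $\rho(Y)$ really lands in $\GL_n(\overline C)$ rather than in some larger subring of $L\otimes_C\overline C$: one must use that the map $S_1\to\overline C$ factors through $S_1/m$ with $m$ maximal, so every $Y_{i,j}$ (and $1/\det Y$) is sent to a genuine element of $\overline C$, making $\bar g$ an honest invertible constant matrix; once that is pinned down, the identification $\rho(S)\otimes_C\overline C=T\otimes_C\overline C$ is a formal manipulation, and simplicity of $S\otimes_C\overline C$ from Lemma~\ref{lem1.11} handles injectivity cleanly.
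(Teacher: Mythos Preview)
Your proposal is correct and follows essentially the same line as the paper's proof: the key observation in both is that $\rho(U)=V\bar g$ for some $\bar g\in\GL_n(\overline C)$, from which the identification of $\rho(S\otimes_C\overline C)$ with $T\otimes_C\overline C$ is immediate, and the Galois-group statement then follows exactly as in Corollary~\ref{cor1}. Your explicit injectivity argument via simplicity of $S\otimes_C\overline C$ (Lemma~\ref{lem1.11}) spells out a step the paper leaves implicit in its one-line ``Therefore $\rho$ is an isomorphism''.
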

\begin{proof} In Proposition~\ref{p3}, we have that $\rho(U) = V(c_{i,j})$ for some $(c_{i,j}) \in \GL_n(\overline{C})$. Therefore $\rho$ is an isomorphism. The isomorphism of $G_S$ and $G_T$ over $\overline{C}$ now follows in the same manner as the conclusion of Corollary~\ref{cor1}.\end{proof}
We can now prove the following result.
\begin{thm}\label{thethm} Let \begin{enumerate}
 \item $k$ be a difference field with algebraically closed field of constants $C$,
 \item  $\sigma Y = AY$ be a difference equation with $A\in  \GL_n(k)$ and let $R$ be the Picard-Vessiot ring for this equation over $k$,
 \item $K$ a difference field extension of $k$ such that $K = k(C_K)$
 \item $L$ a weak Picard-Vessiot field for the equation $\sigma(Y) = AY$ over $K$.
 \end{enumerate}
 Then \begin{enumerate}
 \item[a.] If we write $L = K(V)$ where $ V \in \GL_n(L) \mbox{ and }\sigma V = AV$ then $R\otimes _{C}\overline{C}_K \simeq K[V,\frac{1}{\det(V)}]\otimes_{C_K} \overline{C}_K $ where $\overline{C}_K $ is the algebraic closure of $C_K$. Therefore $K[V,\frac{1}{\det(V)}]$ is also a Picard-Vessiot extension of $K$.
 \item[b.] The Galois groups of $R$ and $K[V,\frac{1}{\det(V)}]$  are isomorphic over $\overline{C}_K$.
 \end{enumerate}
 \end{thm}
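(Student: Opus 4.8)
The plan is to chain together the intermediate results already established, so that the theorem becomes essentially a corollary of Corollary~\ref{cor1} and Corollary~\ref{cor2}. The key observation is that although the base field $k$ of the Picard-Vessiot ring $R$ has algebraically closed constants $C$, while the weak Picard-Vessiot field $L$ lives over the larger field $K=k(C_K)$, these two situations are linked by the field $K$ itself, whose constants are $C_K$.

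First I would pass from $R$ over $k$ to a Picard-Vessiot ring over $K$. Write $R=k[Y,1/\det(Y)]/q$ with $q$ a maximal $\sigma$-ideal (possible by the definition of Picard-Vessiot ring, using that $C=C_k$ is algebraically closed so $C_R=C_k$). By Proposition~\ref{p2} applied with this $k\subset K$, the ring $S:=K[Y,1/\det(Y)]/qK$ is a Picard-Vessiot extension of $K$ for the same equation, and $C_S=C_K$. By Corollary~\ref{cor1}, $G_R$ and $G_S$ are isomorphic over $C_K$; in particular they become isomorphic over $\overline{C}_K$, and moreover $R\otimes_C C_K\cong S$ via the multiplication map (from the proof of Proposition~\ref{p2}), hence $R\otimes_C\overline{C}_K\cong S\otimes_{C_K}\overline{C}_K$.

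Next I would apply Proposition~\ref{p3} and Corollary~\ref{cor2} to the pair $K\subset L$: here $S$ is a Picard-Vessiot extension of $K$ with $C_S=C_K$, and $L=K(V)$ is a weak Picard-Vessiot field extension of $K$ for $\sigma Y=AY$, with $T:=K[V,1/\det(V)]$. Corollary~\ref{cor2} then gives that the embedding $\rho$ of Proposition~\ref{p3} carries $S\otimes_{C_K}\overline{C}_K$ isomorphically onto $T\otimes_{C_K}\overline{C}_K$, that $\rho(U)=V(c_{i,j})$ for some $(c_{i,j})\in\GL_n(\overline{C}_K)$, and that $G_S\cong G_T$ over $\overline{C}_K$. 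Composing with the isomorphism from the previous step yields $R\otimes_C\overline{C}_K\cong S\otimes_{C_K}\overline{C}_K\cong T\otimes_{C_K}\overline{C}_K=K[V,1/\det(V)]\otimes_{C_K}\overline{C}_K$, which is part~(a); the fact that $T$ is itself a Picard-Vessiot extension of $K$ follows because, after base change to $\overline{C}_K$, $T\otimes_{C_K}\overline{C}_K\cong S\otimes_{C_K}\overline{C}_K$ is simple as a difference ring (being a base change of the simple ring $S$ by Lemma~\ref{lem1.11}), and simplicity descends from $T\otimes_{C_K}\overline{C}_K$ to $T$. Finally, part~(b) follows by transitivity of the isomorphisms $G_R\cong G_S\cong G_T$ over $\overline{C}_K$ obtained above.

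The main obstacle I anticipate is purely bookkeeping: one must check that the various rings of indeterminates and maximal $\sigma$-ideals match up across the two reductions — in particular that the matrix $V\in\GL_n(L)$ used to present $L$ in the theorem statement is the same $V$ appearing in the input to Proposition~\ref{p3}, and that the hypothesis $C_S=C_K$ needed to invoke Proposition~\ref{p3} is exactly what Proposition~\ref{p2} delivers. There is no genuinely new idea required; the content is entirely in the already-proved propositions, and the proof is a short concatenation of Corollary~\ref{cor1} (first reduction, $k\leadsto K$) and Corollary~\ref{cor2} (second reduction, comparing the Picard-Vessiot ring $S$ over $K$ with the weak Picard-Vessiot field $L$ over $K$), together with the descent of difference-simplicity under the faithfully flat extension $C_K\subset\overline{C}_K$.
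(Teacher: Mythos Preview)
Your approach is exactly that of the paper: apply Proposition~\ref{p2} to pass from $R$ over $k$ to the Picard-Vessiot ring $S$ over $K$, use Corollary~\ref{cor1} to identify $G_R$ with $G_S$, and then invoke Corollary~\ref{cor2} to compare $S$ with $T=K[V,1/\det V]$. The paper's own proof is literally just these three citations strung together, so your outline matches it step for step, and your added descent argument for the simplicity of $T$ is a correct way to justify the last clause of part~(a).

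One slip worth flagging: the multiplication map $\psi:R\otimes_C C_K\to S$ from the proof of Proposition~\ref{p2} is shown there only to be \emph{injective}, not an isomorphism. Its image is $k[C_K][y,1/\det y]$, whereas $S=k(C_K)[y,1/\det y]$; when $C_K$ properly extends $C$ the ring $k\otimes_C C_K\cong k[C_K]$ is a domain but not a field (pick $t\in C_K$ transcendental over $C$; then $k[t]\subset k[C_K]$), so $S$ is a genuine localization of $R\otimes_C C_K$ and your asserted isomorphism $R\otimes_C C_K\cong S$ fails. This does not touch part~(b) or your simplicity-descent argument, but the chain $R\otimes_C\overline{C}_K\cong S\otimes_{C_K}\overline{C}_K\cong T\otimes_{C_K}\overline{C}_K$ you wrote for part~(a) breaks at the first link. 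The paper's proof is equally terse on this point; at the level of algebraic groups (which is what the theorem is really after) everything goes through via Corollary~\ref{cor1}.
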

 \begin{proof} Let $Y= (Y_{i,j})$ be an $n\times n$ matrix of indeterminates and write $R = k[Y_{i,j}, \frac{1}{\det(Y)}]/(p)$, where $(p)$ is a maximal $\sigma$-ideal.  Assumptions $1.$ and $2.$ imply that $C_{R} = C_k$ (\cite{PuSi},Lemma 1.8) so Propostion~\ref{p2} implies that $S = K[Y_{i,j}, \frac{1}{\det(Y)}]/(p)K$ is a Picard-Vessiot ring with constants $C_K$. Corollary~\ref{cor1} implies that its Galois group $G_R$ is isomorphic over $C$ to $G_{S}$.  Corolary~\ref{cor2} finishes the proof.
 \end{proof}

%

\section{A Tannakian Point of View}\label{tannaka}

In this section we shall give another proof of Theorem~\ref{thethm} for $q$-difference equations in the analytic situation.   Let $\calM(\CX^*)$ be the field of functions $f(x)$ meromorphic on $\CX^* = \CX\backslash \{0\}$ with the automorphism $\sigma(f(x)) = f(qx)$ where $q \in \CX^*$ is a fixed complex number with $|q| \neq 1$. As noted before, the constants $C_{\calM(\CX^*)}$ in this situation correspond to meromorphic functions $\calM(E)$ on the elliptic curve $E=\CX^*/q^\ZX$.  We shall show how the theory of tannakian categories also yields a proof of Theorem~\ref{thethm} when $k = \CX(x)$ and $K = k(C_{\calM(\CX^*)})$.\\[0.1in]
We shall assume that the reader is familiar with some basic facts
concerning difference modules (\cite{PuSi}, Ch. 1.4)  and tannakian
categories  (\cite{deligne_milne},\cite{deligne_tannakian}; see
\cite{PuSi2003}, Appendix B or \cite{breen} for an overview).   We
will denote by $\mathcal{D}_k= k[\sigma, \sigma^{-1}]$ (resp.
$\mathcal{D}_{K}=K[\sigma, \sigma^{-1}]$) the rings of difference
operators over $k$ (resp. $K$). Following (\cite{PuSi}, Ch. 1.4), we
will denote by $Diff(k, \sigma)$ (resp. by $Diff(K, \sigma)$) the
category of difference-modules over $k$ (resp. $K$). The ring of
endomorphisms of the unit object is equal to $\CX$
(resp. $C_K = C_{\calM(\CX^*)} = \calM(E)$) the field of constants of $k$ (resp. $K$).\\[0.1in]
Let  $M$ be a  $\mathcal{D}_k$-module of finite type over  $k$. We
will denote by $M_K =M\otimes_k K$ the $\mathcal{D}_K$-module
constructed by extending the field $k$ to $K$.  We will let
$\lbrace \lbrace M \rbrace \rbrace$ (resp.  $\lbrace \lbrace M_K
\rbrace \rbrace$) denote the full abelian tensor subcategory of
$Diff(k, \sigma)$ (resp.  $Diff(K, \sigma)$)
generated by $M$ (resp. $M_K$) and its dual $M^*$ (resp. ${M_K}^*$).\\[0.1in]
Theorem $1.32$ of \cite{PuSi} gives  a fiber functor $\omega_M$
 over $\mathbb{C}$
 for $\lbrace \lbrace M \rbrace
\rbrace$. In \cite{praagman}, Praagman  gave an existence theorem
(see Section~\ref{intro}) for $q$-difference equations which can be
used to construct a fiber functor $\omega_{M_K}$ for
 $\lbrace \lbrace M_K \rbrace \rbrace$ over $C_K$ (described in detail in Proposition \ref{prag} below). In particular, $\lbrace \lbrace M \rbrace
\rbrace$ and  $\lbrace \lbrace M_K \rbrace \rbrace$ are neutral
tannakian categories over $\mathbb{C}$ and $C_K$ respectively.  The
main task of
 this section is to compare the Galois groups associated to the
 fiber functors $\omega_M$ and $\omega_{M_K}$.  We will prove the
 following theorem:
\begin{thm}\label{ext}
Let $M \in Diff(k, \sigma)$ be a $\mathcal{D}_k$-module of finite
type over $k$.

Then
$$ Aut^{\otimes}( \omega_M) \otimes_{\mathbb{C}} \overline{C_K} \simeq  Aut^{\otimes}(\omega_{M_K}) \otimes_{C_K} \overline{C_K}.$$
\end{thm}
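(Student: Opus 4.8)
The plan is to reduce Theorem~\ref{ext} to Theorem~\ref{thethm} by identifying, on both sides, the Tannakian Galois group $\mathrm{Aut}^\otimes$ of a fiber functor with the algebraic-group Galois group of an appropriate (weak) Picard-Vessiot ring attached to the difference module $M$. First I would fix a cyclic vector / matrix presentation: choose a basis of $M$ so that $M$ corresponds to a difference equation $\sigma Y = AY$ with $A \in \GL_n(k)$, $k = \CX(x)$. On the one hand, Theorem~1.32 of \cite{PuSi} says the fiber functor $\omega_M$ over $\CX$ is given by solutions in the total Picard-Vessiot ring $R$ of $\sigma Y = AY$ over $k$ (here $C = \CX$ is algebraically closed, so $R$ exists and $C_R = C_k = \CX$), and the standard Tannakian dictionary identifies $\mathrm{Aut}^\otimes(\omega_M)$ with the difference Galois group $G_R$ of that Picard-Vessiot ring. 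On the other hand, I would need to describe $\omega_{M_K}$ concretely: Praagman's theorem produces a fundamental solution matrix $V \in \GL_n(\calM(\CX^*))$ for $\sigma Y = AY$, and $L = K(V)$ with $K = k(C_{\calM(\CX^*)})$ is a weak Picard-Vessiot field extension of $K$ with $C_L = C_K = \calM(E)$ (the "no new constants" point has to be checked — see below). The fiber functor $\omega_{M_K}$ over $C_K$ should then be the one sending $M_K$ to its solution space inside $L^n$ (or inside $(L \otimes_{C_K}\overline{C_K})^n$), and $\mathrm{Aut}^\otimes(\omega_{M_K})$ is identified with the Galois group $G_T$ of the weak Picard-Vessiot ring $T = K[V,\tfrac1{\det V}]$ in the sense of Proposition~\ref{p1}.

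Once both Tannakian groups are recast as $G_R$ and $G_T$, the theorem is exactly the content of Theorem~\ref{thethm}: its part (a) gives $R \otimes_{C} \overline{C_K} \simeq K[V,\tfrac1{\det V}]\otimes_{C_K}\overline{C_K}$, hence part (b) gives $G_R \otimes_C \overline{C_K} \simeq G_T \otimes_{C_K}\overline{C_K}$, which is precisely
$$\mathrm{Aut}^\otimes(\omega_M)\otimes_{\CX}\overline{C_K} \simeq \mathrm{Aut}^\otimes(\omega_{M_K})\otimes_{C_K}\overline{C_K}.$$
So structurally the proof has three ingredients: (1) the Tannakian reconstruction $\mathrm{Aut}^\otimes(\omega_M)\simeq G_R$ over $\CX$, which is Theorem~1.32 of \cite{PuSi} together with the usual equivalence between the category $\lbrace\lbrace M\rbrace\rbrace$ and the category of representations of $G_R$; (2) the analogous reconstruction $\mathrm{Aut}^\otimes(\omega_{M_K})\simeq G_T$ over $C_K$, for which I would need to verify that the construction of \cite{PuSi}, Theorem~1.32, goes through verbatim when the constant field is not algebraically closed — the proof there only uses that one has a fundamental matrix with entries in a difference ring with the same constants, which Praagman's theorem supplies; and (3) invoking Theorem~\ref{thethm}.

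The main obstacle, and the step deserving the most care, is constructing $\omega_{M_K}$ and proving it is a fiber functor over $C_K$ — i.e.\ that the "solution functor" $N \mapsto \ker(\sigma - \mathrm{id} \mid N\otimes_K L)$ is exact, faithful, $C_K$-linear, and tensor-compatible, with $N\otimes_K L$ having solution space of the right dimension over $C_K$. This is where Praagman's global existence theorem is essential and where one must check that $L$ (equivalently $T$) has no new constants, so that the solution space of $M_K$ over $C_K$ has dimension exactly $\dim_k M$; here Proposition~\ref{prag} (referenced as forthcoming in the excerpt) presumably does this. A secondary subtlety is matching base points: $\omega_M$ is valued in $\CX$-vector spaces while $\omega_{M_K}$ is valued in $C_K$-vector spaces, so the comparison of $\mathrm{Aut}^\otimes$ groups is only natural after the common base change to $\overline{C_K}$, which is exactly why the statement is phrased with $\otimes\,\overline{C_K}$ on both sides — and this is harmless because over $\overline{C_K}$ both fiber functors become the solution functor into $(L\otimes_{C_K}\overline{C_K})^n$, which by Corollary~\ref{cor2} / Theorem~\ref{thethm}(a) is the same space (up to a $\GL_n(\overline{C_K})$-change of fundamental matrix) as $R\otimes_C\overline{C_K}$ provides. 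Everything else — exactness, the Yoneda/representability arguments, tensor functoriality — is routine once these identifications are in place.
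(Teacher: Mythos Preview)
Your proposal is correct but takes a genuinely different route from the paper's. The paper proves Theorem~\ref{ext} \emph{independently} of Section~\ref{galoissec}: it introduces a third fiber functor $\tilde{\omega}_M$ on $\lbrace\lbrace M_K\rbrace\rbrace$ built from $R_K=R\otimes_kK$, then uses a Galois descent argument (the semi-linear action of $\Aut(C_K/\CX)$ on $\lbrace\lbrace M_K\rbrace\rbrace$ and on the solution spaces, in the style of Katz) to prove $\mathrm{Aut}^\otimes(\omega_M)\otimes C_K=\mathrm{Aut}^\otimes(\tilde\omega_M)$, and finally invokes Deligne's theorem that any two fiber functors on a neutral tannakian category over $C_K$ become isomorphic over $\overline{C_K}$ to compare $\tilde\omega_M$ with $\omega_{M_K}$. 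Only \emph{after} this proof does the paper identify the tannakian groups with $G_R$ and $G_T$ and observe that one can thereby deduce Theorem~\ref{thethm} from Theorem~\ref{ext} in this special case. You run the implication in the opposite direction: grant Theorem~\ref{thethm}, identify both $\mathrm{Aut}^\otimes$ groups with $G_R$, $G_T$, and conclude. Your approach is shorter and avoids both the descent machinery and Deligne's theorem, but it forfeits the entire point of Section~\ref{tannaka}, which is to give a second, tannakian proof of the main comparison result rather than to rederive it from the ring-theoretic one; it also leaves the identification $\mathrm{Aut}^\otimes(\omega_{M_K})\simeq G_T$ over the non-algebraically-closed field $C_K$ as a black box (the paper does too, saying only that it ``can be established in a similar manner'' to \cite{PuSi}, Theorem~1.32.3), whereas the paper's own argument needs from $\omega_{M_K}$ only that it is a fiber functor, which is the content of Proposition~\ref{prag}.
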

The proof is divided in two parts. In the first part, we will
construct a fiber functor $\tilde{\omega}_M$  from  $\lbrace \lbrace
M_K \rbrace \rbrace$ to $Vect_{C_K}$, which \textit{extends}
$\omega_M$ and we will compare its Galois group to that associated
to $\omega_M$. In the second part, we will compare the Galois group
associated to $\omega_{M_K}$ and the Galois group associated to
$\tilde{\omega}_M$, and finally relate these groups to the Galois
groups considered in Theorem~\ref{thethm}.b).

\subsection{The action of $\Aut(C_K/\CX)$ on  $\lbrace \lbrace M_K \rbrace
\rbrace$}\label{autsec}

A module $M_K = M\otimes_k K$ is constructed from the module $M$
essentially by extending the scalars from $\CX$ to $C_K$.  In order
to compare the subcategories $\{\{M\}\}$ and $\{\{M_K\}\}$ they
generate,  it seems natural therefore to consider an action of the
automorphism group $\Aut(C_K/\CX)$ on $M_K$ as well as on
$\{\{M_K\}\}$.  Before we define this action we state some
preliminary facts.

\begin{lem}\label{galois} We have :
\begin{enumerate}
\item  The fixed field $C_K^{\Aut(C_K/\CX)}$ is $\mathbb{C}$.
\item $K \simeq C_K(X)$ where $C_K(X)$ denotes the field of
  rational functions with coefficients in $C_K$. This isomorphism maps
   $\mathbb{C}(X)$  isomorphically onto $k$.
\end{enumerate}
\end{lem}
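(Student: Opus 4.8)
The plan is to treat the two assertions separately, using throughout the identification $C_K = \calM(E)$ of the constants with the field of meromorphic functions on the elliptic curve $E = \CX^*/q^\ZX$.

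For part (2), the point is to show that $x$ is transcendental over $C_K$; granting this, $K = k(C_K) = C_K(x)$ is a rational function field in the single transcendental $x$, so the assignment $X \mapsto x$ defines an isomorphism $C_K(X) \to K$ which restricts to an isomorphism of $\CX(X)$ onto $\CX(x) = k$. To prove transcendence I would argue by contradiction using the difference structure. Suppose $x$ satisfies a monic relation $\sum_{j=0}^n a_j x^j = 0$ of minimal degree $n \geq 1$ with coefficients $a_j \in C_K$ and $a_n = 1$. Since each $a_j$ is a constant (so $\sigma a_j = a_j$) and $\sigma x = qx$, applying $\sigma$ gives $\sum_{j=0}^n a_j q^j x^j = 0$. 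Subtracting $q^n$ times the original relation cancels the leading term and yields $\sum_{j=0}^{n-1} a_j(q^j - q^n) x^j = 0$, a relation of degree $< n$. By minimality all its coefficients must vanish, and since $|q| \neq 1$ forces $q^j \neq q^n$ for $j < n$, we get $a_j = 0$ for every $j < n$; the relation then collapses to $x^n = 0$, which is absurd. Hence $x$ is transcendental over $C_K$, as required.

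For part (1), I would exhibit enough automorphisms of $C_K = \calM(E)$ over $\CX$ to pin the fixed field down to the constants, using the translations of the elliptic curve. For each $a \in E$ the translation $t_a \colon z \mapsto z + a$ is a biholomorphism of $E$, so pullback $f \mapsto f \circ t_a$ is a field automorphism of $\calM(E)$ fixing $\CX$, i.e., an element of $\Aut(C_K/\CX)$. If $f \in C_K$ is fixed by all of these, then $f(z + a) = f(z)$ for every $a \in E$; evaluating at a point where $f$ is regular shows $f$ is constant. Therefore the common fixed field of the translations alone is already $\CX$, and a fortiori $C_K^{\Aut(C_K/\CX)} = \CX$.

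The only genuine input beyond formal manipulation is the geometric fact underlying (1): that $C_K$ is the function field $\calM(E)$ of the compact Riemann surface $E$, and that translations of $E$ induce automorphisms of this field over $\CX$ whose joint fixed field is the constants. This is where I expect the main (though standard) work to lie — identifying the constants with $\calM(E)$ and invoking that a translation-invariant elliptic function is constant. By contrast the transcendence argument for (2) is purely difference-algebraic and uses only that $\sigma$ fixes $C_K$ pointwise, sends $x$ to $qx$, and that $q$ is not a root of unity.
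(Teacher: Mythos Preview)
Your proof is correct. For part~(1), your argument via translations on $E$ is essentially the paper's argument in different clothing: the paper uses the dilations $\sigma_c\colon f(x)\mapsto f(cx)$ on $\CX^*$, which restrict to automorphisms of the $q$-periodic functions $C_K$, and under the identification $C_K=\calM(E)$ with $E=\CX^*/q^\ZX$ these are exactly the translations of $E$ you invoke.

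For part~(2), however, you take a genuinely different and more elementary route. The paper defines the evaluation map $\phi\colon C_K[X]\to K$, $f(X)\mapsto f(z)$, and proves its injectivity by appealing to an external lemma (Cohn, Lemma~II, p.~271, or the Lemma on p.~5 of Etingof) which says that a relation $\sum c_i(z)k_i(z)=0$ with $c_i\in C_K$ and $k_i\in k$ forces the formal relation $\sum c_i(z)k_i(X)=0$; this amounts to linear disjointness of $k$ and $C_K$ over $\CX$ inside $\calM(\CX^*)$. Your argument instead exploits the difference structure directly: apply $\sigma$ to a putative minimal relation for $x$ over $C_K$, subtract to drop the degree, and use that $q$ is not a root of unity. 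This is entirely self-contained and avoids any citation. The paper's route has the mild advantage of yielding the full linear-disjointness statement, but only the transcendence of $x$ over $C_K$ is needed for the lemma as stated, and your argument delivers that cleanly.
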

\begin{proof} $1.$ For all $c \in \mathbb{C}^*$, the restriction to $C_K$ of the map $
  \sigma_c$ which associates to $f(x)\in C_K$ the function $
  \sigma_c(f)(x)=f(cx )$ defines an element of $\Aut(C_K/\CX)$. Let $\phi \in
  C_K^{\Aut(C_K/\CX)}$, the fixed field of $\Aut(C_K/\CX)$. Because $\sigma_c(\phi)=\phi$ for any  $ c\in \mathbb{C}^*$,
  $\phi$ must be  constant.\\[0.1in]
$2.$ For any  $f(X) \in C_K[X]$, put $\phi(f) = f(z)$, viewed as a
meromorphic function of the variable  $z \in  \mathbb{C}^*$. Then,
$\phi$ is a  morphism from $C_E[X]$ to
  $K_E$. We claim that $\phi$ is injective. Indeed,  let us consider a
  dependence relation :
\begin{equation} \label{eqn:2} \sum c_i(z)k_i(z)=0, \forall z \in \mathbb{C}
\end{equation}
where $c_i \in C_E$ and $k_i \in K$. Using Lemma II of (\cite{cohn},
p.~271) or the Lemma of (\cite{etingof}, p.~5) the relation
(\ref{eqn:2}) implies that
\begin{equation} \label{eqn:3} \sum c_i(z)k_i(X)=0, \forall z \in
  \mathbb{C}. \\
\end{equation}
So $\phi$ extends to the function field $C_K(X)$, whose image is the
full $K$. Notice that, by definition of $\phi$, $\mathbb{C}(X)$ maps
isomorphically on $k$.
\end{proof}

Since $\Aut(C_K/\CX)$ acts on $C_K(X)$ (via its action on
coefficients),  we can consider its action on $K$.

\begin{lem} \label{action}
\begin{enumerate}
\item The action of $\Aut(C_K/\CX)$ on $K$ extends the natural action of $\Aut(C_K/\CX)$
  on $C_K$. Moreover the action of $\Aut(C_K/\CX)$ on $K$ is trivial on $k$.
\item $K^{\Aut(C_K/\CX)}=k$.
\item The action of $\Aut(C_K/\CX)$ on $K$ commutes with the action of
  $\sigma_q$.
\end{enumerate}
\end{lem}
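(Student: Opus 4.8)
The plan is to exploit the identification $K \simeq C_K(X)$ of Lemma~\ref{galois}.2, under which $k$ corresponds to $\CX(X)$ and the action of $\Aut(C_K/\CX)$ on $K$ is, by definition, the coefficient-wise action on rational functions in $X$. For part 1, the statement that this action extends the natural action on $C_K$ is immediate since a constant $c \in C_K$ is a rational function of $X$ of degree zero, and that it is trivial on $k$ follows because the coefficients of an element of $\CX(X)$ lie in $\CX$, which is the prime-constant field fixed pointwise by every automorphism in $\Aut(C_K/\CX)$. For part 2, I would compute the fixed field directly: an element $f = P(X)/Q(X) \in C_K(X)$ (in lowest terms, $Q$ monic) is fixed by all of $\Aut(C_K/\CX)$ iff every coefficient of $P$ and $Q$ is fixed, and by Lemma~\ref{galois}.1 the fixed field of $C_K$ under $\Aut(C_K/\CX)$ is exactly $\CX$; hence $f \in \CX(X) = k$. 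The reverse inclusion $k \subseteq K^{\Aut(C_K/\CX)}$ is part 1.

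For part 3, I need to check that the $\Aut(C_K/\CX)$-action commutes with $\sigma_q$ on $K = \calM(\CX^*)$. The cleanest route is to work with the original description of $K$ as meromorphic functions on $\CX^*$ rather than the abstract $C_K(X)$: an element $\tau \in \Aut(C_K/\CX)$ acts on a function $f \in K$, written (via Lemma~\ref{galois}.2) as $f = \sum_i c_i(x) x^i$ with $c_i \in C_K = \calM(E)$ periodic (i.e.\ $\sigma_q$-invariant), by $\tau(f) = \sum_i \tau(c_i)(x)\, x^i$, where $\tau(c_i)$ is again $\sigma_q$-invariant. Then $\sigma_q$ applied to $\tau(f)$ multiplies the $i$-th term by $q^i$ (since $\sigma_q(c_i) = c_i$ and $\sigma_q(x^i) = q^i x^i$), and the same computation applied to $f$ first and then $\tau$ gives the identical result: $\tau(\sigma_q(f)) = \tau\big(\sum_i c_i\, q^i x^i\big) = \sum_i \tau(c_i)\, q^i x^i = \sigma_q(\tau(f))$. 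The point is that $\sigma_q$ acts $C_K$-linearly (it fixes $C_K$ pointwise) and multiplies the basis vector $x^i$ by the scalar $q^i \in \CX \subset C_K$, so it manifestly commutes with any $C_K$-semilinear operator whose "twist" is by an automorphism fixing $q$ — and every element of $\Aut(C_K/\CX)$ fixes $q \in \CX$.

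I expect part 3 to be the only place requiring genuine care, and the main obstacle there is purely bookkeeping: one must be sure that the expansion $f = \sum_i c_i(x)x^i$ is well-defined (which is exactly the content of the isomorphism $K \simeq C_K(X)$ established in Lemma~\ref{galois}.2, so no new input is needed) and that $\tau$ indeed acts term-by-term on this expansion, which is how the action on $K = C_K(X)$ was defined. Once the action is pinned down on this $C_K$-basis $\{x^i\}$ of $K$ (or rather on the rational-function presentation), parts 1 and 2 are formal consequences of Lemma~\ref{galois}, and part 3 reduces to the observation that $\sigma_q$ is $C_K$-linear with eigenvalue $q^i$ on $x^i$ while each $\tau$ is $C_K$-semilinear fixing these eigenvalues.
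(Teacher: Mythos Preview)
Your proposal is correct and follows essentially the same approach as the paper: parts 1 and 2 are immediate from the coefficient-wise definition of the action together with Lemma~\ref{galois}, and part 3 is checked on monomials $cX^i$ (using $\si_q(cX^i)=cq^iX^i$ and $\tau(q)=q$) and then extended from $C_K[X]$ to $C_K(X)=K$. The paper's proof is terser but the ideas are identical.
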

\begin{proof} $1.$ This  comes from the  definition of the action of $\Aut(C_K/\CX)$ on
  $K$.  Because  $\Aut(C_K/\CX)$ acts trivially on $\mathbb{C}(X)$, its action
  on $k$ is also trivial.\\[0.1in]
$2.$ Because of Lemma~\ref{galois}, $C_K^{\Aut(C_K/\CX)}=\mathbb{C}$. Thus, by construction $K^{\Aut(C_K/\CX)}=k$.\\[0.1in]
$3.$  Let $ i$ be a natural integer and $f(X) = c X^i$ where $c \in
  C_K$. Then
$$ \tau ( \sigma_q (f)) =\tau (c q^i X^i)= \tau(c) q^i X^i
=\sigma_q(\tau (f))$$ with $\tau \in \Aut(C_K/\CX)$. Thus, the
action of $\Aut(C_K/\CX)$ commutes with $\sigma_q$ on $C_K[X]$. It
therefore commutes on $C_K(X)=K$.
\end{proof}
Before we finally define   the action of $\Aut(C_K/\CX)$ on $\lbrace
\lbrace M_K \rbrace \rbrace$, we need one more definition.
\begin{defin}
Let $F$ be a field of caracteristic zero and $V$ be a $F$-vector
space of finite dimension over $F$. We denote by $Constr_F(V)$ any
\textit{construction of linear algebra} applied to $V$ inside
$Vect_{F}$, that is to say any  vector space over $F$ obtained by
tensor products over $F$, direct sums, symetric and antisymetric
products on $V$ and its dual $V^* := Hom_{F-lin}(V, F)$.
\end{defin}
\begin{lem}
Let $V$ be a vector space of finite dimension over $k$ (resp. over
$\mathbb{C}$). Then, $Constr_k(V) \otimes_k K =Constr_{K}(V \otimes
K)$ (resp. $Constr_{\mathbb{C}}(V) \otimes C_K =Constr_{C_K}(V
\otimes C_K)$). In other words, the \textit{constructions of linear
algebra} commute with the scalar extension.
\end{lem}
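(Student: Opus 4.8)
The plan is to treat the two assertions uniformly. In each we are handed a field extension in characteristic zero — $k \subset K$ in the first case, $\CX \subset C_K$ in the second — and a vector space $V$ of finite dimension over the bottom field; so write $F \subset F'$ for the extension, $V$ for the given $F$-space, and $V' := V \otimes_F F'$. I would prove $Constr_F(V)\otimes_F F' \simeq Constr_{F'}(V')$ by induction on the number of elementary operations — $\oplus$, $\otimes$, $\Sym^m$, $\bigwedge^m$, and passage to the dual — used to build $Constr$ out of $V$ and $V^*$, noting that every intermediate vector space produced along the way is again of finite dimension over $F$.

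The inductive step rests on base-change compatibilities for each elementary operation, which I would record for finite-dimensional $F$-spaces $W_1, W_2$ with $W_i' := W_i\otimes_F F'$. First, $(W_1\oplus W_2)\otimes_F F' \simeq W_1'\oplus W_2'$, because $\otimes_F F'$ commutes with finite direct sums. Second, $(W_1\otimes_F W_2)\otimes_F F' \simeq W_1'\otimes_{F'} W_2'$, by the canonical associativity and commutativity isomorphisms of the tensor product. Third, $W_1^*\otimes_F F' \simeq (W_1')^*$ (the right-hand dual taken $F'$-linearly): the natural $F'$-linear map $Hom_F(W_1,F)\otimes_F F' \to Hom_{F'}(W_1',F')$ is bijective because $W_1$ is finite-dimensional, as one sees by reducing to $W_1 = F$ or by comparing dimensions. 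Fourth, $\Sym^m(W_1)\otimes_F F' \simeq \Sym^m_{F'}(W_1')$ and $\bigwedge^m(W_1)\otimes_F F' \simeq \bigwedge^m_{F'}(W_1')$; in characteristic zero these follow from the tensor-power case, since $\Sym^m$ and $\bigwedge^m$ are the images of the symmetrizing and antisymmetrizing idempotents of $\QX[S_m]$ acting on $W_1^{\otimes m}$, and those idempotents are defined over the prime field and so are untouched by $\otimes_F F'$ (alternatively, one argues from the universal properties directly). Each isomorphism in this list is canonical, i.e.\ natural in the $W_i$.

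The induction is then routine. The base case is $Constr(V) = V$, which is trivial, or $Constr(V) = V^*$, which is the third compatibility above with $W_1 = V$. For the inductive step, if $Constr$ arises by applying one elementary operation to constructions $Constr_1$ — and, in the binary cases, $Constr_2$ — of strictly smaller complexity, the inductive hypothesis gives canonical isomorphisms $Constr_i(V)\otimes_F F' \simeq Constr_i(V')$, the matching compatibility lets one commute that operation past $\otimes_F F'$, and composing these canonical maps produces $Constr(V)\otimes_F F' \simeq Constr(V')$.

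The one place where something genuine happens is the third compatibility — that forming the linear dual commutes with scalar extension — which fails for infinite-dimensional $W_1$ and is exactly why the hypothesis ``of finite dimension'' is imposed; everything else is bookkeeping with canonical isomorphisms. I would also make the naturality in $V$ explicit rather than merely implicit, since it is this functoriality, rather than the bare existence of an isomorphism, that is used afterwards, when these identifications must be carried out compatibly with the action of $\sigma_q$ and with the action of $\Aut(C_K/\CX)$ on $\{\{M_K\}\}$.
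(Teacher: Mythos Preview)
Your proposal is correct and follows the same approach as the paper: the paper's proof merely treats the single instance $Constr_k(V) = Hom_{k\text{-lin}}(V,k)$, observing that finite-dimensionality of $V$ gives $Hom_{k\text{-lin}}(V,k)\otimes_k K = Hom_{K\text{-lin}}(V\otimes_k K, K)$, and leaves the remaining operations implicit. You have simply spelled out the full induction over the elementary operations and correctly identified the dual as the one place where the finite-dimensionality hypothesis is actually used.
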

\begin{proof} Consider for instance $Constr_k(V)= Hom_{k-lin}(V,k)$.  Because $V$ is of finite dimension over $k$, we have $Hom_{k-lin}(V,k) \otimes_k
K=Hom_{K-lin}(V \otimes_k K, K)$. \end{proof}
\noindent To define the action of $\Aut(C_K/\CX)$ on $\lbrace
\lbrace M_K \rbrace \rbrace$ we note that for any object $N$ of
$\lbrace \lbrace M_K \rbrace \rbrace$, there exists, by definition,
a construction $M'=Constr_k(M)$ such that $N \subset M' \otimes_k
K$. Let now $M'=Constr_k(M)$ be a construction of linear
  algebra  applied to $M$. The Galois group $\Aut(C_K/\CX)$ acts on
  $M'_K =M' \otimes_k K$ via the
semi-linear action $(\tau \rightarrow id \otimes \tau)$. It
therefore permutes the objects of $\lbrace \lbrace M_K \rbrace
\rbrace$. It remains to prove that this permutation is well defined
 and is independent of the choice of the
construction in which these objects lie.  If there exist $M_1$ and
$M_2$ two objects of $Constr_k(M)$ such that $N \subset M_1
\otimes_k K$ and $N \subset M_2 \otimes_k K$. Then, by a diagonal
embedding  $N \subset (M_1 \oplus M_2)\otimes_k K$. The action of
$\Aut(C_K/\CX)$ on
  $(M_1\oplus M_2) \otimes_k K$ is the direct sum of the action of $\Aut(C_K/\CX)$  on
  $M_1 \otimes_k K$ with the action of $\Aut(C_K/\CX)$  on
  $M_2 \otimes_k K$.This shows that the restriction of the action of $\Aut(C_K/\CX)$  on
  $M_1 \otimes_k K$ to $N$ is the same as the restriction of the action of $\Aut(C_K/\CX)$  on
  $M_2 \otimes_k K$ to $N$. Thus, the permutation is independent of the choice of the
construction in which these objects lie.

\subsection{Another fiber functor $\tilde{\omega}_M$ for  $\lbrace \lbrace M_K \rbrace
\rbrace$}

We now extend $\omega_M$ to a fiber functor $\tilde{\omega}_M$ on
the category $\lbrace \lbrace M_K \rbrace \rbrace$. For this
purpose, we appeal to Proposition~\ref{p2} to conclude that {\em if
  $R$ be  a  \textit{Picard-Vessiot ring} for   $M$ over  $k$ and  $\sigma X=A X,
A\in \GL_n(k)$ be an equation of  $M$ over  $k$. If
$R=K[Y,\frac{1}{det(Y)}]/I$ where  $Y$ is an  $n \times n$ matrix of
indeterminates,  $\sigma Y =AY$ and  $I$ is a maximal
$\sigma$-ideal, then $R_K=R\otimes_k K$ is a  \textit{weak
Picard-Vessiot ring} for
$M_K$ over $K$.}\\[0.1in]
We then have the following proposition-definition:

\begin{prop}\label{functor}
For any object  $N$  of  $\lbrace \lbrace M_K \rbrace \rbrace$ let
$$\tilde{\omega}_M(N)= \mathrm{Ker} (\sigma -Id, R_K \otimes_{K}
N).$$ Then  $\tilde{\omega}_M : \lbrace \lbrace M_K \rbrace \rbrace
\rightarrow Vect_{C_K}$ is a faithful exact, $C_K$-linear tensor
functor. Moreover, $\tilde{\omega}_M(N \otimes K)=\omega_M(N)
\otimes C_K$ for every $N \in \lbrace \lbrace M \rbrace \rbrace$.
\end{prop}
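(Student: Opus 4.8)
The plan is to verify, essentially by unwinding definitions and invoking standard facts about Picard-Vessiot and weak Picard-Vessiot rings, that $\tilde\omega_M$ has the asserted properties. First I would observe that $R_K = R\otimes_k K$ is a weak Picard-Vessiot ring for $M_K$ over $K$, as already noted via Proposition~\ref{p2}; in particular $R_K$ is a difference ring whose ring of constants is $C_K$ (the same computation as in Lemma~\ref{lem1.11}/Proposition~\ref{p2} shows no new constants appear). The key structural input is that $R_K$ is a \emph{principal homogeneous space-like} object: for the equation $\sigma X = AX$ defining $M$, the matrix $Y$ becomes a fundamental solution matrix in $R_K$, so for any difference module $N$ in $\lbrace\lbrace M_K\rbrace\rbrace$, which by construction embeds in some $Constr_{K}(M_K) = Constr_k(M)\otimes_k K$, the solution space $\tilde\omega_M(N) = \mathrm{Ker}(\sigma - \mathrm{Id}, R_K\otimes_K N)$ is a $C_K$-vector space whose dimension equals $\dim_K N$. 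This dimension count is the heart of exactness and faithfulness and follows from the fact that $R_K\otimes_K N$ is, as a difference module over $R_K$, trivial: it has a basis of $\sigma$-invariant elements because the corresponding statement holds already over $R$ after tensoring with $R_K$ (one trivializes $M\otimes_k R$ using the fundamental matrix in $R$, hence $Constr_k(M)\otimes_k R$, hence $N\otimes_K R_K$ after base change), combined with the fact that over the difference ring $R_K$ with constants $C_K$, a set of solutions that is $C_K$-linearly independent stays $R_K$-linearly independent — this last point is the analogue of the classical Wronskian/linear independence lemma and is exactly where one uses that $R_K$ is a simple difference ring (or at least that it has no new constants).

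Granting the dimension count, the remaining assertions are formal. $C_K$-linearity of $\tilde\omega_M$ is immediate since $\sigma$ is $C_K$-linear on $R_K\otimes_K N$ and the kernel of a $C_K$-linear map is a $C_K$-subspace. Exactness: given a short exact sequence $0\to N'\to N\to N''\to 0$ in $\lbrace\lbrace M_K\rbrace\rbrace$, tensoring with the flat $K$-module $R_K$ and taking $\sigma$-invariants gives a left-exact sequence $0\to\tilde\omega_M(N')\to\tilde\omega_M(N)\to\tilde\omega_M(N'')$; surjectivity on the right then follows by comparing dimensions, $\dim_K N = \dim_K N' + \dim_K N''$ and $\dim_{C_K}\tilde\omega_M(\cdot) = \dim_K(\cdot)$. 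Faithfulness follows from the same dimension equality: $\tilde\omega_M(N) = 0$ forces $\dim_K N = 0$, i.e.\ $N = 0$. The tensor structure is the canonical isomorphism $\mathrm{Ker}(\sigma-\mathrm{Id}, R_K\otimes_K N_1)\otimes_{C_K}\mathrm{Ker}(\sigma-\mathrm{Id}, R_K\otimes_K N_2)\xrightarrow{\sim}\mathrm{Ker}(\sigma-\mathrm{Id}, R_K\otimes_K(N_1\otimes_K N_2))$: the map is well-defined since a product of $\sigma$-invariants is $\sigma$-invariant, it is injective for trivial reasons, and it is surjective again by the dimension count (both sides have $C_K$-dimension $\dim_K N_1\cdot\dim_K N_2$); compatibility with the unit object and associativity/commutativity constraints are routine. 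This all mirrors the proof of Theorem~1.32 of \cite{PuSi} with $\CX$ replaced by $C_K$ and $R$ by $R_K$.

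Finally, the compatibility statement $\tilde\omega_M(N\otimes_k K) = \omega_M(N)\otimes_{\mathbb C} C_K$ for $N\in\lbrace\lbrace M\rbrace\rbrace$. Here $\omega_M(N) = \mathrm{Ker}(\sigma - \mathrm{Id}, R\otimes_k N)$ by Theorem~1.32 of \cite{PuSi}. Since $R_K\otimes_K(N\otimes_k K) = R\otimes_k N\otimes_k K = (R\otimes_k N)\otimes_k K$ and $K = k(C_K)$ with $\sigma$ acting trivially on $C_K$ over $k$... more precisely, I would factor $R_K\otimes_K(N\otimes_k K)\cong (R\otimes_k N)\otimes_{C}C_K$ as difference modules where $\sigma$ acts trivially on the $C_K$ factor, and then invoke Lemma~\ref{lem1} (the bijection between ideals, equivalently the fact that taking $\sigma$-invariants commutes with the base change $C\to C_K$ because $R\otimes_k N$ is spanned over $C$ by its $\sigma$-invariants) to conclude $\mathrm{Ker}(\sigma-\mathrm{Id}, (R\otimes_k N)\otimes_C C_K) = \mathrm{Ker}(\sigma-\mathrm{Id}, R\otimes_k N)\otimes_C C_K = \omega_M(N)\otimes_{\mathbb C}C_K$.

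The step I expect to be the main obstacle is the dimension count $\dim_{C_K}\tilde\omega_M(N) = \dim_K N$, i.e.\ showing $R_K\otimes_K N$ is a trivial difference module with exactly enough $\sigma$-invariants. The subtlety is that $R_K$ is \emph{not} simple as a difference ring in general (only $R$ is, over $k$) and $C_K$ need not be algebraically closed, so one cannot quote the theory of \cite{PuSi} verbatim; instead one must argue that no new constants and the fundamental-matrix trivialization already present in $R$ survive the faithfully flat base change $k\hookrightarrow K$, which is precisely the content secured by Proposition~\ref{p2} (that $R\otimes_k C_K$ embeds in the simple ring $S$) together with Lemma~\ref{lem1}. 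Once triviality over $R_K$ with the correct constant field is in hand, everything else is bookkeeping.
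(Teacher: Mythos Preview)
Your proposal is correct and follows essentially the same approach as the paper: trivialize $R_K\otimes_K N$ using the fundamental matrix already present in $R$ (propagated through tensor constructions and passed to subobjects), deduce $\dim_{C_K}\tilde\omega_M(N)=\dim_K N$, and then read off faithfulness, exactness, and the tensor structure from this dimension count. Two simplifications worth noting: first, your concern that $R_K$ might not be simple is unnecessary, since Proposition~\ref{p2} (which you cite) says precisely that $R_K=R\otimes_kK\simeq K[Y,\det(Y)^{-1}]/qK$ \emph{is} a Picard-Vessiot ring over $K$, hence simple with constants $C_K$; second, for the compatibility $\tilde\omega_M(N\otimes_k K)=\omega_M(N)\otimes_{\mathbb C}C_K$, the paper avoids your tensor manipulations by simply observing that the natural map $\omega_M(N)\otimes_{\mathbb C}C_K\hookrightarrow\tilde\omega_M(N\otimes_k K)$ is an inclusion of $C_K$-spaces of the same dimension $\dim_k N$.
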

\begin{proof} Because of the existence of  a fundamental matrix with coefficients in
$R_K$, $\tilde{\omega}_M(M_K)$ satisfies $R_K \otimes_{K_K} M_K =
R_K \otimes_{C_K} \tilde{\omega}_M(M_K)$. Let $\sigma X=A X, A\in
\GL_n(k)$ be an equation of  $M$ over  $k$  and
$R=k[Y,\frac{1}{det(Y)}]/I$ be its corresponding Picard-Vessiot ring
over $k$. Let $M'$ be a  construction of linear
  algebra  applied to $M$ over $k$. Then $R_K$ contains a fundamental
matrix of $M'\otimes K$. This comes from the fact that an equation
of $M'$ is obtained from the same   construction of linear
  algebra applied to $A$. Moreover, if $N \in \lbrace \lbrace M_K
\rbrace \rbrace$, then $R$ contains also a fundamental matrix for
$N$. Indeed,  there exists  $M'$, a  construction of linear
  algebra applied to $M$ over $k$, such that $N \subset
M'\otimes K$. Now, $R_K$ contains the entries of a fundamental
solution matrix of $N$ and this matrix is invertible because its
determinant divides the determinant of a fundamental matrix of
solutions of $M' \otimes K$. Thus,  $R_K \otimes_{K} N = R_K
\otimes_{C_K} \tilde{\omega}_M(N)$. We deduce from this fact, that
$\tilde{\omega}_M$ is a faithful, exact, $C_K$-linear tensor
functor.\\[0.1in]
For every $N \in \lbrace \lbrace M \rbrace \rbrace$, we have a
natural inclusion of $C_K$-vector spaces of solutions
$\omega_M(N)\otimes C_K \subset \tilde{\omega}_M(N \otimes K)$.
Since their dimensions over $C_K$ are both equal to the dimension of
$N$ over $k$,  they must coincide. \end{proof}

\subsection{Comparison of the Galois groups}

Let $M'=Constr_K(M)$ be a construction of linear
  algebra  applied to $M$. The automorphism group $\Aut(C_K/\CX)$ acts on
  $\tilde{\omega}_M(M'_K) = \omega_M(M') \otimes_{\mathbb{C}}C_K$ via the
semi-linear action $(\tau \rightarrow id \otimes \tau)$. It
therefore permutes the objects of the tannakian category generated
by
  $\omega_M(M) \otimes_{\mathbb{C}} C_K$ inside $Vect_{C_K}$.
\begin{lem}\label{com2}
 Let $N$ be an object of  $\lbrace \lbrace
M_K \rbrace \rbrace$ and $\tau$ be an element of $\Aut(C_K/\CX)$.
Then, for the actions of $\Aut(C_K/\CX)$ defined as above and in
Section~\ref{autsec}, we have:
$$  \tau (\tilde{\omega}_M(N)) = \tilde{\omega}_M(\tau N) $$
(equality inside $\omega_M(M')\otimes _{\mathbb{C}}C_K$ for any $M'
= Constr_k(M)$ such that $N \subset M' \otimes K$.)

\end{lem}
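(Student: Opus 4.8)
The plan is to funnel the two \emph{a~priori} different semilinear $\Aut(C_K/\CX)$-actions occurring in the statement — the one on $\lbrace\lbrace M_K\rbrace\rbrace$ from Section~\ref{autsec} and the one on $\omega_M(M')\otimes_{\mathbb{C}}C_K$ used at the beginning of this subsection — through a \emph{single} diagonal action on the difference ring $R_K=R\otimes_k K$, and then to observe that forming $\sigma$-invariants is manifestly equivariant. So first I would fix $\tau\in\Aut(C_K/\CX)$ and record its action on $R_K$: since $\tau$ acts on $K$ and fixes $k$ (Lemma~\ref{action}.1), the map $\mathrm{id}_R\otimes\tau$ is a $\tau$-semilinear ring automorphism of $R_K$ fixing $R\otimes 1$; and since $\tau$ commutes with $\sigma_q$ on $K$ (Lemma~\ref{action}.3), while $\sigma$ acts on $R_K=R\otimes_k K$ as $\sigma_R\otimes\sigma_q$, this automorphism commutes with $\sigma$.

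Next, fix a construction $M'=Constr_k(M)$ with $N\subseteq M'_K:=M'\otimes_k K$, and equip $R_K\otimes_K M'_K$ with the diagonal action $\tau(r\otimes m)=\tau(r)\otimes\tau(m)$, where $\tau$ acts on $M'_K$ by $\mathrm{id}_{M'}\otimes\tau$. A routine verification shows that this is well defined over $K$, $\tau$-semilinear, and commutes with the diagonal action of $\sigma$ (using once more Lemma~\ref{action}.3, since $\sigma$ acts on $M'_K$ as $\sigma_{M'}\otimes\sigma_q$). Because $\tau$ stabilizes $R_K$, it carries $R_K\otimes_K N$ onto $R_K\otimes_K\tau N$ inside $R_K\otimes_K M'_K$ (the inclusion $R_K\otimes_K N\subseteq R_K\otimes_K M'_K$ being honest, as $N\hookrightarrow M'_K$ is split over the field $K$). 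Commuting with $\sigma$, it therefore carries $\mathrm{Ker}(\sigma-\mathrm{Id},R_K\otimes_K N)$ bijectively onto $\mathrm{Ker}(\sigma-\mathrm{Id},R_K\otimes_K\tau N)$, which is exactly the assertion $\tau(\tilde{\omega}_M(N))=\tilde{\omega}_M(\tau N)$, both members lying in $\mathrm{Ker}(\sigma-\mathrm{Id},R_K\otimes_K M'_K)=\tilde{\omega}_M(M'_K)$.

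It then remains to identify this ambient space, with the $\tau$-action just described, as $\omega_M(M')\otimes_{\mathbb{C}}C_K$ carrying its action from Section~\ref{autsec}. For this I would use that $R$ is a Picard-Vessiot ring with $C_R=\mathbb{C}$: it contains a fundamental solution matrix for the construction of linear algebra (applied to an equation of $M$) that yields $M'$, so the canonical map $R\otimes_{\mathbb{C}}\omega_M(M')\to R\otimes_k M'$, $r\otimes w\mapsto rw$, is an isomorphism of $\mathcal{D}_k$-modules carrying $\sigma$ to $\sigma_R\otimes\mathrm{id}$. Tensoring over $k$ with $K$ gives $R_K\otimes_K M'_K\simeq R_K\otimes_{\mathbb{C}}\omega_M(M')$, and one checks that under it the diagonal $\sigma$ becomes $\sigma_{R_K}\otimes\mathrm{id}$ and the diagonal $\tau$ becomes $(\mathrm{id}_R\otimes\tau)\otimes\mathrm{id}$. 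Taking $\sigma$-invariants and using $C_{R_K}=C_K$ (because $R_K$ is a weak Picard-Vessiot ring, as recalled before Proposition~\ref{functor}) yields $\tilde{\omega}_M(M'_K)=C_{R_K}\otimes_{\mathbb{C}}\omega_M(M')=\omega_M(M')\otimes_{\mathbb{C}}C_K$ with $\tau$ acting as $\mathrm{id}\otimes\tau$, precisely the action of Section~\ref{autsec} and Proposition~\ref{functor}. The independence of $M'$ then follows by comparing two admissible choices $M_1,M_2$ inside $(M_1\oplus M_2)\otimes_k K$ exactly as at the end of Section~\ref{autsec}, together with the exactness of $\tilde{\omega}_M$.

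The only genuine difficulty is bookkeeping: verifying that the various semilinear $\tau$-actions — on $K$, on $R_K$, on $M'_K$, on $R_K\otimes_K N$, and on $\omega_M(M')\otimes_{\mathbb{C}}C_K$ — really are the restriction, extension, or specialization of one another under the canonical tensor identifications, and in particular that $\mathrm{id}_R\otimes\tau$ restricted to $C_{R_K}=C_K$ is just $\tau$ acting on scalars. Once Lemma~\ref{action}.3 has been invoked to make $\tau$ commute with $\sigma$ throughout, the passage to kernels is purely formal.
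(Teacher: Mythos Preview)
Your proof is correct and follows essentially the same route as the paper: both define the $\tau$-action on $R_K\otimes_K M'_K\simeq R\otimes_k(M'\otimes_k K)$ via $\mathrm{id}\otimes\mathrm{id}\otimes\tau$ (your ``diagonal'' action is this same map under the canonical identification), invoke Lemma~\ref{action}.3 to get commutation with $\sigma$, and conclude that $\tau$ carries $\mathrm{Ker}(\sigma-\mathrm{Id},R_K\otimes_K N)$ onto $\mathrm{Ker}(\sigma-\mathrm{Id},R_K\otimes_K\tau N)$. Your write-up is in fact more thorough than the paper's, since you explicitly verify that the induced $\tau$-action on the ambient $\tilde{\omega}_M(M'_K)$ agrees with $\mathrm{id}\otimes\tau$ on $\omega_M(M')\otimes_{\mathbb{C}}C_K$ --- a point the paper relies on Proposition~\ref{functor} for but does not spell out.
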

\begin{proof} Let $M' = Constr_K{M}$ be such that $N \subset M'
\otimes_k K$ and consider the action of  $\Aut(C_K/\CX)$ on $R \otimes_k(M' \otimes_k K) $ defined by $id \otimes  id \otimes \tau$.\\[0.1in]
This allows us to consider the action of $\Aut(C_K/\CX)$ on $R_K
\otimes_{K} N = R \otimes_k N$. By definition, we have $\tau(R_K
\otimes_{K} N)=R \otimes_{k} (\tau(N))=R_K \otimes_{K} \tau(N)$ for
all $\tau \in \Aut(C_K/\CX)$. Moreover inside  $R \otimes_k
(M'\otimes K)$, the action of $\Aut(C_K/\CX)$ commutes with the
action of $\sigma_q$ (see Lemma~\ref{action}). Therefore
$$\tau(\mathrm{Ker}(\sigma_q -Id, R_K \otimes_{K} N))=\mathrm{Ker}(\sigma_q -Id,
R_K \otimes_{K} \tau(N)).$$\end{proof}

The next proposition  is Corollary~\ref{cor1}, but we shall now give
a tannakian proof of it, following the proof of
(\cite{katz_calculations}, Lemma 1.3.2).
\begin{prop}\label{prop3.8}
$Aut^{\otimes}( \omega_M) \otimes C_K =
  Aut^{\otimes} (\tilde{\omega}_M).$
\end{prop}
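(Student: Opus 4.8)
The plan is to show that the two affine group schemes are naturally isomorphic by exhibiting compatible functors of points, much as in the proof of Corollary~\ref{cor1}, but phrased tannakially. Recall that $Aut^{\otimes}(\omega_M)$ is an affine group scheme over $\CX$ and $Aut^{\otimes}(\tilde\omega_M)$ is an affine group scheme over $C_K$, so the statement ``$Aut^{\otimes}(\omega_M)\otimes C_K = Aut^{\otimes}(\tilde\omega_M)$'' is the assertion that the base change to $C_K$ of the first is canonically isomorphic to the second. I would prove this by comparing their functors of points on an arbitrary $C_K$-algebra $B$: an element of $(Aut^{\otimes}(\omega_M)\otimes_{\CX}C_K)(B)$ is a tensor automorphism of the functor $\omega_M(\,\cdot\,)\otimes_{\CX}B$ on $\{\{M\}\}$, while an element of $Aut^{\otimes}(\tilde\omega_M)(B)$ is a tensor automorphism of $\tilde\omega_M(\,\cdot\,)\otimes_{C_K}B$ on $\{\{M_K\}\}$. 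The key input is Proposition~\ref{functor}, which says $\tilde\omega_M(N\otimes K)=\omega_M(N)\otimes_{\CX}C_K$ functorially in $N\in\{\{M\}\}$; so restriction along the tensor functor $N\mapsto N\otimes_k K$ from $\{\{M\}\}$ to $\{\{M_K\}\}$ gives a natural homomorphism $Aut^{\otimes}(\tilde\omega_M)(B)\to (Aut^{\otimes}(\omega_M)\otimes C_K)(B)$.

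First I would check this restriction map is injective: every object $N$ of $\{\{M_K\}\}$ sits inside some $M'\otimes_k K$ with $M'=Constr_k(M)$, and $M'$ itself lies in $\{\{M\}\}$, so a tensor automorphism of $\tilde\omega_M$ is determined by its value on the single object $M_K=M\otimes_k K$, hence by its value on $\omega_M(M)\otimes C_K$, which is the restricted datum. Next, and this is where I expect the real work, I would show surjectivity: given a $B$-point $g$ of $Aut^{\otimes}(\omega_M)\otimes C_K$, i.e.\ a compatible family of $B$-linear automorphisms $g_{N}$ of $\omega_M(N)\otimes_{\CX}B$ for $N\in\{\{M\}\}$, I must extend it to a tensor automorphism of $\tilde\omega_M$ on all of $\{\{M_K\}\}$. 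The point is that every object of $\{\{M_K\}\}$ is a subquotient of objects of the form $M'\otimes_k K$ with $M'\in\{\{M\}\}$ and that $\tilde\omega_M$ is exact and faithful; so one defines the action on $\tilde\omega_M(M'\otimes_k K)=\omega_M(M')\otimes C_K\otimes_{C_K}B$ via $g_{M'}\otimes\mathrm{id}_B$ and then checks this descends to every subquotient. Here is exactly where Lemma~\ref{com2} enters: it guarantees that the $\Aut(C_K/\CX)$-conjugates of a subobject $N\subset M'\otimes_k K$ are again subobjects and that $\tilde\omega_M$ is equivariant, which is what one needs to see that the naively defined action respects the (possibly non-$k$-rational) subobjects of $M'\otimes_k K$ — that is, it genuinely permutes all objects of $\{\{M_K\}\}$, not merely those defined over $k$. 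Concretely, a general $N\subset M'\otimes_k K$ need not be of the form $N_0\otimes_k K$, but $N$ is contained in a sum of $\Aut(C_K/\CX)$-conjugates, each of which is handled by the $M'$-level action; compatibility of $g$ with tensor, dual, and morphisms in $\{\{M\}\}$ then forces a well-defined action on $N$.

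The main obstacle is precisely this last descent step: verifying that the action defined on the ``large'' objects $M'\otimes_k K$ really is independent of the chosen embedding $N\hookrightarrow M'\otimes_k K$ and compatible with all morphisms of $\{\{M_K\}\}$, including those not coming from $\{\{M\}\}$ by base change. I would handle independence of the embedding by the diagonal-embedding trick already used in Section~\ref{autsec} (if $N\subset M_1\otimes_k K$ and $N\subset M_2\otimes_k K$ then $N\subset(M_1\oplus M_2)\otimes_k K$ and the two induced actions agree because $g$ is additive), and I would reduce compatibility with a general morphism $f:N_1\to N_2$ in $\{\{M_K\}\}$ to the case of morphisms between base-changed objects by noting that $\mathrm{Hom}_{\{\{M_K\}\}}(M_1\otimes_k K, M_2\otimes_k K)=\mathrm{Hom}_{\{\{M\}\}}(M_1,M_2)\otimes_k K$ for constructions $M_i$, together with faithful exactness of $\tilde\omega_M$. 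Once injectivity and surjectivity are established, functoriality in $B$ is immediate, and Yoneda (equivalently, the standard recognition that a natural isomorphism of functors of points of affine group schemes is an isomorphism of group schemes, cf.\ the proof of Corollary~\ref{cor1}) yields the asserted identification $Aut^{\otimes}(\omega_M)\otimes C_K=Aut^{\otimes}(\tilde\omega_M)$.
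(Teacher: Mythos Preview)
Your injectivity step (the inclusion $Aut^{\otimes}(\tilde\omega_M)\subset Aut^{\otimes}(\omega_M)\otimes C_K$) is fine and agrees with the paper's easy inclusion. The genuine gap is in surjectivity. You must show that for an arbitrary $B$-point $g$ of $Aut^{\otimes}(\omega_M)\otimes C_K$ and an arbitrary subobject $N\subset M'\otimes_kK$, the automorphism $g_{M'}\otimes\mathrm{id}_B$ of $\omega_M(M')\otimes_\CX B$ preserves the subspace $\tilde\omega_M(N)\otimes_{C_K}B$. Your ``sum of $\Aut(C_K/\CX)$-conjugates'' trick only produces an $\Aut(C_K/\CX)$-stable subobject $N_0\otimes_kK\supset N$; it follows that $g$ preserves $\tilde\omega_M(N_0)\otimes B$, but nothing you wrote forces $g$ to preserve the smaller $\tilde\omega_M(N)\otimes B$ inside it. The appeal to ``compatibility with morphisms in $\{\{M\}\}$'' does not help, because the inclusion $N\hookrightarrow M'_K$ is precisely a morphism of $\{\{M_K\}\}$ that is \emph{not} base-changed from $\{\{M\}\}$; and your Hom formula, even if granted, concerns morphisms between base-changed objects, not subobjects of them.

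The paper's argument, following Katz, reverses the logic and avoids this obstacle. After the easy inclusion, one observes (via Lemma~\ref{com2}) that the family of subspaces $\{\tilde\omega_M(N)\}$ is permuted by the semi-linear $\Aut(C_K/\CX)$-action, hence their common stabilizer $Aut^{\otimes}(\tilde\omega_M)\subset\mathcal{G}l(\omega_M(M))\otimes C_K$ is $\Aut(C_K/\CX)$-invariant and therefore of the form $G\otimes C_K$ for some $\CX$-subgroup $G\subset Aut^{\otimes}(\omega_M)$. Chevalley's theorem then writes $G$ as the stabilizer of a single $\CX$-subspace $V\subset\omega_M(M')$; since $G\otimes C_K=Aut^{\otimes}(\tilde\omega_M)$, the subspace $V\otimes C_K$ is $\tilde\omega_M(N)$ for some $N\in\{\{M_K\}\}$, and Lemma~\ref{com2} again shows $\tilde\omega_M(\tau N)=\tilde\omega_M(N)$ for all $\tau$, whence $N$ descends to $\{\{M\}\}$ and $Aut^{\otimes}(\omega_M)$ already stabilizes $V$. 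So Lemma~\ref{com2} is used not to push an arbitrary $g$ through an arbitrary $N$, but to show that the \emph{single} $N$ witnessing any strict inclusion $G\subsetneq Aut^{\otimes}(\omega_M)$ must in fact be $k$-rational --- a contradiction.
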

\begin{proof} By definition, $Aut^{\otimes}( \tilde{\omega}_M) = Stab (
 \tilde{\omega}_M(W), \ W \in \lbrace \lbrace
M_K \rbrace \rbrace )$ is the stabilizer inside
 $\mathcal{G}l(\tilde{\omega}_M(M_K))=\mathcal{G}l(\omega_M(M))
 \otimes C_K$ of
  the fibers of all the  sub-equations  $W$ of $M_K$. Similarly,  $Aut^{\otimes}( \omega_M) = Stab (
 \omega_M(W)), \ W \in \lbrace \lbrace
M\rbrace \rbrace )$, so that   the following inclusion holds:\\
$$Aut^{\otimes}( \tilde{\omega}_M) \subset Aut^{\otimes}(
  \omega_M) \otimes C_K.$$
 The semi-linear action of $\Aut(C_K/\CX) $ permutes the sub-$\mathcal{D}_{K}$-modules $W$ of $ \lbrace \lbrace
M_K \rbrace \rbrace$ and  the fixed field of $C_K$ of  $\Gamma_E$ is
$\mathbb{C}$ (see Lemma~\ref{galois}$.1)$).  Therefore
$Aut^{\otimes}
 ( \tilde{\omega}_M)$ \textit{is defined} over  $\mathbb{C}$,
i.e., it is of the form  $G \otimes
 C_K$ for a unique subgroup $G \subset Aut^{\otimes}( \omega_M)$. By
 Chevalley's theorem,  $G$ is defined as the stabilizer of one
 $\mathbb{C}$-subspace  $V$ of  $\omega_M(M')$ for some construction
 $M' = Constr_{k} (M)$.\\[0.1in]
We must show that $V$ is stable under $Aut^{\otimes}(
  \omega_M)$, i.e., we must show that $V$ is of the form
  $\omega_M(N)$ for $N \in \lbrace \lbrace M \rbrace \rbrace$. Because
  $G \otimes C_K =Aut^{\otimes}( \tilde{\omega}_M)$ leaves $V
  \otimes C_K$ stable, we know that there exists $N \in
  \lbrace \lbrace M_K \rbrace \rbrace$ with
  $\tilde{\omega}_M(N) = V \otimes C_K$. For any $\tau \in
  \Aut(C_K/\CX)$, $$\tilde{\omega}_M( N) = V \otimes C_K = \tau (V \otimes C_K)=\tau
  (\tilde{\omega}_M(N))=\tilde{\omega}_M(\tau N),$$
 in view of Lemma~\ref{com2}. We therefore deduce from Proposition~\ref{functor} that $\tau
  N=N$ for any $\tau \in \Aut(C_K/\CX)$. Consequently,
  $N$ is \textit{defined over} $K$ (see Lemma~\ref{action}$.3)$), i.e., it is of the form
  $N \otimes K$, where $N \in \lbrace \lbrace M \rbrace \rbrace$.\end{proof}
We need to define one more functor before we finish the proof of
Theorem~\ref{ext}.
\begin{prop}\label{prag}
Let \begin{equation} \label{eqn:5} \sigma_q Y= A Y \end{equation} be
an equation of $M$ with $A \in \GL_n(K)$. There exists a fundamental
matrix of solutions $U$ of (\ref{eqn:5}) with coefficients in the
field $\calM(\mathbb{C}^*)$ of functions meromorphic on $\CX^*$.
Moreover, if $V$ is another fundamental matrix of solutions of
(\ref{eqn:5}), there exists $P \in
\GL_n(C_K)$ such that $U = PV$. \\[0.05in]
Let  $L$  be the subfield of $\calM(\mathbb{C}^*)$ generated over
$K$ by the entries of $U$.   For any object  $N$  of  $\lbrace
\lbrace M_K \rbrace \rbrace$ let  $${\omega}_{M_K}(N)= {\rm Ker}
(\sigma -Id, L \otimes N).$$ Then  ${\omega}_{M_K} : \lbrace \lbrace
M_K \rbrace \rbrace \rightarrow Vect_{C_K}$ is a faithful exact,
$C_K$-linear tensor functor.
\end{prop}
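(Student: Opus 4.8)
The plan is to dispatch the three assertions of the proposition in order. The existence of a fundamental solution matrix $U \in \GL_n(\calM(\CX^*))$ of $\sigma_q Y = AY$ is Praagman's global existence theorem \cite{praagman} (recalled in Section~\ref{intro}), applied to this equation, whose coefficient matrix lies in $\GL_n(K) \subset \GL_n(\calM(\CX^*))$. For uniqueness, if $V \in \GL_n(\calM(\CX^*))$ is another fundamental solution matrix, then $\sigma_q(V^{-1}U) = \sigma_q(V)^{-1}\sigma_q(U) = (AV)^{-1}(AU) = V^{-1}U$, so $P := V^{-1}U$ is fixed by $\sigma_q$ and hence $P \in \GL_n(C_{\calM(\CX^*)}) = \GL_n(C_K)$ and $U = VP$. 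I would also note here that, since $K = k(C_K) \subseteq L \subseteq \calM(\CX^*)$ and $C_{\calM(\CX^*)} = C_K$, the field $L$ automatically satisfies $C_L = C_K$; in particular $K[U,\frac{1}{\det U}]$ is a weak Picard-Vessiot ring and $L$ a weak Picard-Vessiot field for $\sigma_q Y = AY$ over $K$.

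To obtain the functor $\omega_{M_K}$, the plan is to reuse the proof of Proposition~\ref{functor} essentially verbatim, with $L$ playing the role of $R_K$: the key point is that $L$, like $R_K$, trivializes every object of $\lbrace\lbrace M_K\rbrace\rbrace$. First, since $U \in \GL_n(L)$ is a fundamental matrix, $L\otimes_K M_K = L\otimes_{C_K}\omega_{M_K}(M_K)$. Next, for any construction of linear algebra $M' = Constr_k(M)$ an equation of $M'$ is obtained by applying the same construction to $A$, so a fundamental matrix of $M'_K = M'\otimes_k K$ is the same construction applied to $U$; its entries lie in $L = K(U)$, so $M'_K$ is trivialized by $L$ as well. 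Finally, for an arbitrary object $N$ of $\lbrace\lbrace M_K\rbrace\rbrace$, pick $M' = Constr_k(M)$ with $N \subseteq M'_K$; then $L\otimes_K N$ is a $\sigma_q$-stable $L$-submodule of the trivial difference module $L\otimes_{C_K}\omega_{M_K}(M'_K)$, and I would invoke the module analogue of Lemma~\ref{lem1} --- a $\sigma_q$-stable $L$-submodule of a trivial difference module over $L$ is generated over $L$ by its $\sigma_q$-invariant elements --- to conclude that $L\otimes_K N = L\otimes_{C_K}\omega_{M_K}(N)$ and hence $\dim_{C_K}\omega_{M_K}(N) = \dim_K N$.

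With every object of $\lbrace\lbrace M_K\rbrace\rbrace$ trivialized by $L$, the functorial properties become formal. $C_K$-linearity is clear. Faithfulness: a morphism $f$ with $\omega_{M_K}(f) = 0$ has $L\otimes_K f = 0$, hence $f = 0$ by faithful flatness of $L$ over $K$. Exactness: a short exact sequence in $\lbrace\lbrace M_K\rbrace\rbrace$ remains exact after $-\otimes_K L$ and becomes a short exact sequence of trivial difference modules whose maps are scalar extensions of $C_K$-linear maps between the solution spaces; faithful flatness of $L$ over $C_K$ then shows that the corresponding sequence of solution spaces is exact. The tensor structure comes from $L\otimes_K(N_1\otimes_K N_2) = (L\otimes_K N_1)\otimes_L(L\otimes_K N_2)$, the fact that products of solutions are solutions, and $\omega_{M_K}(\mathbf{1}) = {\rm Ker}(\sigma_q - {\rm Id}, L) = C_L = C_K$.

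The one non-formal ingredient is the module version of Lemma~\ref{lem1} used above, and it is precisely here that $C_L = C_K$ is needed; its proof is the same minimal-support induction as for Lemma~\ref{lem1} (take an element of the $L$-submodule of minimal support that is not in the $L$-span of the $\sigma_q$-invariants, normalize a coefficient to $1$, note that applying $\sigma_q - {\rm Id}$ strictly shortens the support, and use a coefficient outside $C_K$ to reach a contradiction), and one may instead simply quote the corresponding statement from \cite{PuSi}. Everything else parallels Proposition~\ref{functor}.
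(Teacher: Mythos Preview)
Your proof is correct and follows the paper's own approach: cite Praagman for existence, observe $C_L=C_K$ so that $L$ is a weak Picard--Vessiot field for $M_K$, and then run the argument of Proposition~\ref{functor} with $L$ in place of $R_K$. You supply considerably more detail than the paper's one-line ``the proof \ldots\ is the same as that of Proposition~\ref{functor}'', and your computation correctly yields $U=VP$ (the paper's $U=PV$ is a harmless slip), but the structure is identical.
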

\begin{proof} For the existence of $U$ see \cite{praagman} Theorem $3$. Since the field of constants of $L$ is $C_K$, $L$ is a \textit{weak Picard Vessiot} field for $M_K$, and the
proof that $\omega_{M_K}$ is a fiber functor on  $\lbrace \lbrace
M_K \rbrace \rbrace$ is   the same  as that of
Proposition~\ref{functor}.\end{proof}
We now turn to the
\begin{proofofthm}
By Propositions~\ref{functor} on the one hand and \ref{prag}   on
the other hand, there exists two fiber functors $\tilde{\omega}_M$
and $\omega_{M_K}$  on $\lbrace \lbrace M_K \rbrace \rbrace$ which
is a neutral tannakian category over $C_E$. A fundamental theorem of
Deligne (\cite{deligne_milne}, Theorem 3.2)  asserts
 that for
 any field $C$ of caracteristic zero,  two fiber functors of a neutral tannakian
category over   $C$  become isomorphic over the algebraic closure of
$C$. Taking $C=C_K$ and combining this with
Proposition~\ref{prop3.8}, we therefore have
   $$ Aut^{\otimes}( \omega_M) \otimes \overline{C_K} \simeq  Aut^{\otimes}( \omega_{M_K}) \otimes \overline{C_K}.$$ \end{proofofthm}
To show the connection between Theorem~\ref{ext} and
Theorem~\ref{thethm} we must show that the group of difference $k$
(rep. $K$)-automorphisms of $R$ (resp. $F$) can be identified with
the $\mathbb{C}$ (resp. $C_K$)-points of $Aut^{\otimes}( \omega_M)$
(resp. $Aut^{\otimes}( \omega_{M_K}$). In the first case, this has
been shown in Theorem 1.32.3 of \cite{PuSi}; the second case can be
established  in a similar manner.
This enables us to  deduce, in our special  case, Theorem~\ref{thethm} from Theorem \ref{ext}.\\[0.1in]
We conclude with an example to show that these considerations can be
used to show the algebraic independence of certain classical
functions.
\begin{ex} {\em Consider the $q$-difference equation
\begin{equation}\label{eqn:4} \sigma_q(y) = y + 1. \end{equation} In
(\cite{PuSi}, Section 12.1) the authors denote by $l$ the formal
solution of \ref{eqn:4}, i.e. the formal $q$-logarithm.  It is
easily seen that the Galois group, in the sense of \cite{PuSi}, of
(\ref{eqn:4})  is equal to $(\mathbb{C},+)$ and therefore that the
dimension of the Galois group ${G_R}_{/\mathbb{C}}$ is equal to $1$.
We deduce from Theorem 2.9 that the dimension of the Galois group
${G_S}_{/C_K}$ is also equal to $1$. In particular,
the field generated  over $K$ by the meromorphic solutions of (\ref{eqn:4}) has transcendence degree $1$ over $K$.\\[0.1in]
The classical Weiestrass $\zeta$ function associated to the elliptic
curve $E =\mathbb{C}^* / q^{\mathbb{Z}}$  satisfies the equation
(\ref{eqn:4}). Therefore, if $\wp$  is the Weierstrass function of
$E$, we obtain that  $\zeta(z)$ is transcendental over the field
$\mathbb{C}(z, \wp(z))$.} \end{ex}

\section{A Model-Theoretic Point of View}\label{mt}

\subsection{Preliminary model-theoretic definitions and results}\label{mt1}
\begin{defin} Let $K$ be a difference field with automorphism
  $\sigma$.\begin{itemize} 
\item[1.]$K$ is {\it generic} iff 
\begin{quotation}\noindent $(*)$  every
{\bf finite} system of difference equations with coefficients in $K$
and which has a solution in a difference field containing $K$,
already has a solution in $K$.\end{quotation}
\item[2.]A {\em finite $\si$-stable extension} $M$ of $K$ is a finite separably
   algebraic extension of $K$ such that $\si(M)=M$.
\item[3.]The {\em core of $L$ over $K$}, denoted by ${\rm Core}(L/K)$, is the
   union of all finite $\si$-stable extensions of $K$ which are
   contained in $L$.
\end{itemize}
\end{defin}
One of the difficulties with difference fields, is that there are
usually several non-isomorphic ways of extending the automorphism to
the algebraic closure of the field. An important result of Babbitt
(see \cite{cohn}) says that once we know the behaviour of $\si$ on
${\rm Core}(\overline{K} /K)$, then we know how $\si$ behaves on the
algebraic  closure $\overline{K}$ of $K$.

Fix an infinite cardinal $\kappa$ which is  larger than all the
cardinals of structures considered (e.g., in our case, we may take
$\kappa=|\CX|^+=(2^{\aleph_0})^+$). In what follows we will work in
a generic difference field $\calu$, which we will assume {\em
sufficiently saturated}, i.e., which has the following properties:
\begin{itemize}
\item[(i)] $(*)$ {\em above holds for every system of difference equations {\bf of
size} $<\kappa$ (in infinitely many variables). }
\item[(ii)] {\em (1.5 in \cite{ChHr1}) If $f$ is an isomorphism between two algebraically closed
  difference subfields of $\calu$ which are of cardinality $<\kappa$,
  then $f$ extends to an automorphism of $\calu$. }
\item[(iii)] {\em Let $K\subset L$ be difference fields of cardinality
  $<\kappa$, and assume that $K\subset \calu$. If every finite
  $\si$-stable extension of $K$ which is contained in $L$ $K$-embeds in
  $\calu$, then there is
 a $K$-embedding of $L$ in $\calu$.}
\end{itemize}
Note that the hypotheses of (iii) are always verified if $K$ is an
algebraically closed subfield of $\calu$. If $K$ is a difference
field containing the algebraic closure $\overline{\QX}$ of $\QX$,
then $K$ will embed into $\calu$, if and only if the difference
subfield $\overline{\QX}$ of $K$ and the difference subfield
$\overline{\QX}$ of $\calu$ are isomorphic. This might not always be
the case. However, every difference field embeds into some
sufficiently saturated generic difference field.

Let us also recall the following result  (1.12 in
  \cite{ChHr1}):  Let $n$ be a positive integer, and consider the
  field $\calu$
  with the automorphism $\si^n$. Then $(\calu,\si^n)$ is a generic difference field, and satisfies the
saturation properties required of $(\calu,\si)$.

\medskip\noindent
{\bf Notation}. We use the following notation. Let $R$ be a
difference ring. Then, as in the previous sections,  $C_R$ denotes
the field of ``constants'' of $R$, i.e., $C_R=\{a\in R\mid
\si(a)=a\}$. We let  $D_R = \{ a \in R \ | \ \sigma^m(a) = a \mbox{
for some }
 m\neq 0\}$. Then $D_R$ is a difference subring of $R$, and if $R$ is a field,
 $D_R$ is
 the relative algebraic closure of $C_R$ in $R$. We let $D'_R$ denote
 the difference ring with same underlying ring as $D_R$ and on 
 which $\si$ acts trivially. Thus $C_\calu$ is a
pseudo-finite field (see 1.2 in \cite{ChHr1}), and $D_\calu$ is its
algebraic closure (with the action of $\si$), $D'_\calu$ the
algebraic closure of $C_\calu$ on which $\si$ acts trivially.

Later we will work with powers of $\si$, and will  write
$Fix(\si^n)(R)$ for $\{a\in R\mid \si^n(a)=a\}$
 so that no
confusion arises. If $R=\calu$, we will simply write $Fix(\si^n)$.
Here are some additional properties of $\calu$ that we
will use.\\[0.1in]
Let $K\subset M$ be difference subfields of $\calu$, with $M$
algebraically closed, and let $a$ be a tuple of $\calu$. By 1.7 in
\cite{ChHr1}:

(iv) If the orbit of $a$ under $\Aut(\calu/K)$ is finite, then
$a\in\overline{K}$ (the algebraic closure of $K$). \\[0.1in]
We already know that every element of $\Aut(M/KC_M)$ extends to an
automorphism of $\calu$. More is true: using  1.4, 1.11 and Lemma 1
in the appendix of \cite{ChHr1}:

(v) every element of
$\Aut(M/KC_M)$ lifts to an element of $\Aut(\calu/KC_\calu)$.\\[0.1in]
Recall that a definable subset $S$ of $\calu^n$ is {\em stably
embedded} if whenever $R\subset \calu^{nm}$ is definable with parameters
 from $\calu$, then
$R\cap S^m$ is definable using parameters from $S$. An important
result (\cite{ChHr1} 1.11)) shows that $C_\calu$ is stably embedded.
Let $d\geq 1$. Then, adding parameters from $Fix(\si^d)$,  there is
a definable isomorphism between $Fix(\si^d)$ and $C_\calu^d$. Hence,

(vi) for every $d>0$, $Fix(\si^d)$ is  stably embedded, and

(vii) if $\theta$ defines an automorphism of $D_\calu$ which is the
identity on $D_M$, then $\theta$ extends to an automorphism of
$\calu$
which is the identity on $M$.\\[0.1in]
We also need the following lemma. The proof is rather
model-theoretic and we refer to the Appendix of \cite{ChHr1} for the
definitions and results.  Recall that if $K$ is a difference
subfield of $\calu$, then its definable closure, $\dcl(K)$, is the
subfield of $\calu$ fixed by $\Aut(\calu/K)$. It is an algebraic
extension of $K$, and is the subfield of the algebraic closure
$\overline{K}$ of $K$ which is fixed by the subgroup $\{\tau\in
\calG al(\overline{K}/K)\mid \si^{-1}\tau\si=\tau\}$.
\begin{lem}\label{zlem1}
 Let $K$ be a difference field, and $M$ be a finite
$\si$-stable extension of $KC_\calu$. Then $M\subset 
\overline{K}D_\calu$, i.e., there is some finite $\si$-stable extension
$M_0$ of $K$ such that $M\subset M_0D_\calu $.\end{lem}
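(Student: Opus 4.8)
The plan is to reduce, in several steps, to a descent statement, and then to settle it using Babbitt's theorem together with the structure of $D_\calu$ over the pseudo-finite field $C_\calu$. First I would take $M$ to be Galois over $KC_\calu$: its Galois closure $\widetilde M$ over $KC_\calu$ is finite and $\si$-stable (since $\si$ permutes the $KC_\calu$-conjugates of $M$), and any $M_0$ that works for $\widetilde M$ works for $M$. Write $G=\Gal(M/KC_\calu)$. Conjugation by $\si$ is an automorphism of the finite group $G$, so $\si^d$ centralizes $G$ on $M$ for some $d\geq 1$; by the result recalled just before the Lemma, $(\calu,\si^d)$ is again a sufficiently saturated generic difference field, with constant field $Fix(\si^d)\subseteq D_\calu$ and with $D_{(\calu,\si^d)}=D_\calu$. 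Since it is enough to show $M\cdot Fix(\si^d)\subseteq\overline K D_\calu$, I replace $\si$ by $\si^d$, $C_\calu$ by $Fix(\si^d)$ and $M$ by $M\cdot Fix(\si^d)$ — whose Galois group over $K\cdot Fix(\si^d)$ embeds $\si^d$-equivariantly into $G$ — thereby reducing to the case where $\si$ acts trivially on $\Gal(M/KC_\calu)$, i.e. where every $KC_\calu$-automorphism of $M$ commutes with $\si$.

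Next I would base change to $KD_\calu$: with $N:=MD_\calu$ and base $KD_\calu=KC_\calu D_\calu$, the extension $N/KD_\calu$ is still finite, $\si$-stable and Galois, with trivial $\si$-action on the Galois group, and $D_N=N\cap D_\calu=D_\calu$ because $D_\calu\subseteq KD_\calu\subseteq N$. It now suffices to prove $N\subseteq\overline K D_\calu$. Indeed, granting that, each of the finitely many generators of $N$ over $KD_\calu$ lies in $K_i'D_\calu$ for some finite extension $K_i'/K$; using the theorem on natural irrationalities applied to the Galois extension $\overline K/K$ one can then replace the $K_i'$ by a single finite $\si$-stable extension $M_0$ of $K$ contained in $\overline K$ with $N\subseteq M_0 D_\calu$, so that $M\subseteq N\subseteq M_0 D_\calu$, as required.

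So the whole statement reduces to the following: a finite $\si$-stable extension $N$ of $F:=KD_\calu$ with $D_F=D_\calu$ algebraically closed and with $\si$ trivial on $\Gal(N/F)$ satisfies $N\subseteq\overline K D_\calu$. For this I would descend the extension $N/F$. Because $\si$ acts trivially on $\Gal(N/F)$, Babbitt's theorem shows that the $\si$-structure of $N$ is governed by that of $\mathrm{Core}(N/F)$; and because $D_\calu$ is the algebraic closure of the pseudo-finite field $C_\calu$, with $\si$ a topological generator of $\Gal(D_\calu/C_\calu)\cong\hat{\ZX}$, the field $D_\calu$ already contains all the roots of unity and all the Kummer generators that any abelian layer of a $\si$-stable extension could require. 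Combining these facts one shows that every $\si$-stable algebraic layer of $N$ over $F$ is generated by elements of $\overline K$, whence $N\subseteq\overline K D_\calu$.

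The point singled out at the end of the previous paragraph — that there is no room for $\si$-stable algebraic extensions of $KD_\calu$ beyond those coming from $\overline K$, once the full $D_\calu$ is present and $\si$ acts trivially on the Galois group — is the heart of the argument and the step I expect to be the main obstacle: it is exactly here that the pseudo-finiteness of $C_\calu$ and the genericity of $\calu$ are used in an essential way, through the precise shape of $\Gal(D_\calu/C_\calu)$ and Babbitt's decomposition, combined with the trivial $\si$-action. (One could instead argue inside $\Gamma:=\Gal(\overline{KC_\calu}/KC_\calu)$: the reductions give $\gamma^{-1}\si^{-1}\gamma\si\in\Gal(\overline{KC_\calu}/M)\cap\Gal(\overline{KC_\calu}/KD_\calu)$ for every $\gamma\in\Gamma$, and one wants to conclude $\Gal(\overline{KC_\calu}/\overline K)\cap\Gal(\overline{KC_\calu}/D_\calu)\subseteq\Gal(\overline{KC_\calu}/M)$, once more invoking that the image of $\si$ topologically generates the procyclic group $\Gal(KD_\calu/KC_\calu)$.)
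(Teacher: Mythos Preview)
Your reductions---passing to the Galois closure, choosing $d$ so that $\si^d$ centralises the Galois group, and replacing $\si$ by $\si^d$---are sound and parallel what the paper does. However, the step you yourself flag as ``the heart of the argument and the main obstacle'' is not actually carried out: you assert that Babbitt's theorem together with Kummer-type considerations over $D_\calu$ force every $\si$-stable layer of $N/KD_\calu$ to descend to $\overline K$, but you do not prove this. Babbitt's theorem tells you that the $\si$-structure on $\overline F$ is determined by that on ${\rm Core}(\overline F/F)$; it does not by itself tell you that ${\rm Core}(\overline F/F)\subseteq\overline K D_\calu$. The remark about $D_\calu$ containing ``all the Kummer generators that any abelian layer could require'' is not a proof either: $N/KD_\calu$ need not be abelian, and even in the abelian case you would still have to exhibit the descent. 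Your parenthetical commutator reformulation likewise stops at stating what is to be shown rather than showing it.

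The paper fills exactly this gap, and by a genuinely different, model-theoretic route. After the analogous reductions (working over $\overline K C_\calu$ and choosing $d$ as above), it invokes stable embeddedness of ${\rm Fix}(\si^d)$ in the generic difference field $(\calu,\si^d)$ together with $\dcl(\overline K)=\overline K$ and $\dcl({\rm Fix}(\si^d))={\rm Fix}(\si^d)$ to get
\[
tp_d\bigl(\overline K/\overline K\cap{\rm Fix}(\si^d)\bigr)\ \vdash\ tp_d\bigl(\overline K/{\rm Fix}(\si^d)\bigr).
\]
If $M\not\subseteq\overline K D_\calu$, then (enlarging $d$ so that $M=M_0D_\calu$ with $M_0$ Galois over $\overline K\,{\rm Fix}(\si^d)$) there are several non-isomorphic extensions of $\si^d$ to $M$; but the $\overline K\,{\rm Fix}(\si^d)$-isomorphism type of the $\si^d$-difference field $M$ is encoded in $tp_d(\overline K/{\rm Fix}(\si^d))$, contradicting the entailment above. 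This is precisely where the specific model theory of $\calu$ enters, and your sketch does not supply an algebraic substitute for it.
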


\begin{proof}Fix an integer $d\geq 1$. Then, in the difference field
  $(\calu,\si^d)$, $Fix(\si^d)$ is stably embedded, 
$\dcl(\overline{K})=\overline{K}$ and 
  $\dcl(Fix(\si^d))=Fix(\si^d)$. Denoting types in $(\calu,\si^d)$
  by $tp_d$, this implies
$$tp_d(\overline{K}/\overline{K}\cap
Fix(\si^d))\vdash tp_d(\overline{K}/Fix(\si^d)).\eqno{(\sharp)}$$

Assume by way of contradiction that $KC_\calu$ has a finite
$\si$-stable extension $M$ which is not contained in
$\overline{K}D_\calu$. We may assume that $M$ is Galois over $\overline{K}C_\calu$
(see Thm 7.16.V in \cite{cohn}), with Galois group $G$. Choose 
$d$ large enough so that
$\si^d$ commutes with all elements of $G$, and $M=M_0D_\calu$, where
$M_0$ is Galois over $\overline{K}Fix(\si^d)$. Then there are
several non-isomorphic ways of extending $\si^d$ to $M$. As
$tp_d(\overline{K}/Fix(\si^d))$  describes in particular the
$\overline{K}Fix(\si^d)$-isomorphism type of the $\si^d$-difference field $M$, 
this
contradicts $(\sharp)$ (see Lemmas 2.6 and 2.9 in
\cite{ChHr1}).\end{proof}

\subsection{The Galois group}\label{zpv}
From now on, we assume that all fields are of characteristic $0$.
Most of the statements below can be easily adapted to the positive
characteristic case. Let $K$ be a difference subfield of $\calu$,
$A\in \GL_n(K)$, and consider the set $\calS=\calS(\calu)$ of
solutions of the equation
$$\si(X)=AX,\ \det(X)\neq 0.$$
Consider $H=\Aut(K(\calS)/KC_\calu)$. We will call $H$ the {\em
Galois group of
    $\si(X)=AX$ over $KC_\calu$}\footnote{\ Warning: This is not the
    usual Galois group
      defined by model theorists, please see the discussion in subsection
      \ref{mt4}.}.

Then $H$ is the set of $C_\calu$-points of some algebraic group
$\HX$ defined over $KC_\calu$. To see this, we consider the ring
$R=K[Y, \det(Y)^{-1}]$ (where $Y=(Y_{i,j})$ is an $n\times n$ matrix
of indeterminates), extend $\si$ to $R$ by setting $\si(Y)=AY$, and
let $L$ be the field of fractions of $R$. Then $L$ is a regular
extension of $K$, and there is a $K$-embedding $\varphi$ of $L$ in
$\calu$, which sends $C_L$ to a subfield of $C_\calu$, and $D_L$ to
a subfield of $D_\calu$. We let $T=\varphi(Y)$. Then every element
$g\in H$ is completely determined by the matrix $M_g=T^{-1}g(T)\in
\GL_n(C_\calu)$, since if $B\in \calS$, then
$B^{-1}T\in\GL_n(C_\calu)$. Moreover, since $KC_{\varphi(L)}(T)$ and
$KC_\calu$ are linearly disjoint over $KC_{\varphi(L)}$, the
algebraic locus $W$ of $T$ over $KC_\calu$ (an algebraic subset of
$\GL_n$) is defined over $KC_{\varphi(L)}$, and $H$
  is the set of elements of $\GL_n(C_\calu)$ which leave $W$ invariant. It is
  therefore the set of $C_\calu$-points of an algebraic group $\HX$,
  defined over $KC_{\varphi(L)}$. We let $\HX'$ denote the Zariski
  closure of $\HX(C_\calu)$. Then
  $\HX'$ is defined over $C_\calu$, and it is also clearly defined over
  $\overline{K\varphi(C_L)}$, so that it is defined over $C_\calu\cap
  \overline{K\varphi(C_L)}=C_\calu\cap \overline{\varphi(C_L)}$.

\begin{prop}\label{zggp1}Let $\HX^0$ denote the connected component of
  $\HX$, and let $M_0$ be the relative algebraic closure of
  $K\varphi(C_L)$ in $\varphi(L)$, $M$ its Galois closure over
  $K\varphi(C_L)$.
\begin{itemize}\item[1.]$\dim(\HX)={\rm tr.deg}(L/KC_L)$.
\item[2.]$M_0$ is a finite $\si$-stable extension of
  $K\varphi(C_L)$ and $[\HX:\HX^0]$ divides $[M:K\varphi(C_L)]$
\item[3.] $[\HX':\HX^0]=[\HX(C_\calu):\HX^0(C_\calu)]$ equals 
the number of left cosets of 
  $\calG al(M/M_0)$ in $\calG al(M/K\varphi(C_L))$ which are invariant
  under the action of $\si$ by conjugation.
\item[4.]If the algebraic closure of $C_K$ is contained in $C_\calu$,
  then the element $\si\in \Aut(D_L/C_L)=\calG al(D_L/C_L)$ extends to an element of
 $\Aut(KC_\calu(T)/KC_\calu)$.
\end{itemize}\end{prop}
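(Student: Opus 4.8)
The plan is to carry out everything inside $\calu$: identify $L$ with $\varphi(L)$, set $T=\varphi(Y)$ and $E=\varphi(C_L)$, so that $K(\calS)=KC_\calu(T)$ --- a subfield of $\calu$ containing $C_\calu$, hence with constant field exactly $C_\calu$ --- and $E\subseteq C_\calu$. As recalled just before the statement, $g\mapsto M_g=T^{-1}g(T)$ embeds $H$ onto $\HX(C_\calu)$, where $\HX\subseteq\GL_n$ is the right-translation stabilizer of the locus $W$ of $T$ over $KC_\calu$; moreover $W$ is defined over $KE$, and $KE(T)$, $KC_\calu$ are linearly disjoint over $KE$. The structural fact I would extract first, underlying all four parts, is $W=T\HX$, so that $W$ is an $\HX$-torsor over $KC_\calu(T)$: $Wh=W$ for $h\in\HX$ gives $Th\in W$, i.e.\ $T\HX\subseteq W$; the two sides are irreducible of equal dimension over $KC_\calu$; and $W=\overline{\{T\}}\subseteq T\HX$, whence equality.

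For part~1, the torsor structure gives $\dim\HX=\dim W={\rm tr.deg}(KC_\calu(T)/KC_\calu)$, and linear disjointness rewrites this as ${\rm tr.deg}(KE(T)/KE)={\rm tr.deg}(L/KC_L)$. For part~2: $L/KE$ is finitely generated, so its relative algebraic closure $M_0$ is finite over $KE$ and, in characteristic $0$, separable; it is $\sigma$-stable because $\sigma(L)=L$ (one has $\sigma(T)=AT$, and applying $\sigma^{-1}$ shows $\sigma^{-1}(T)\in\GL_n(L)$ too), $\sigma(K)=K$, and $\sigma(E)=E$ ($E=C_L$ is the constant field of the $\sigma$-stable $L$). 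The function field of $W$ over $KE$ is $KE(T)=L$, so the geometric components of $W$ --- and, via $W=T\HX$, of $\HX$ --- biject with the $KC_\calu$-embeddings $M_0KC_\calu\hookrightarrow\overline{KC_\calu}$, i.e.\ with the left cosets of $H_0=\calG al(M/M_0)$ in $G=\calG al(M/KE)$; they are all defined over $M$, and $[\HX:\HX^0]=[G:H_0]=[M_0:KE]$ divides $[M:KE]$.

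For part~3 I would first show $\HX^0\subseteq\HX'$ (so $(\HX')^0=\HX^0$ and $(\HX')^0(C_\calu)=\HX^0(C_\calu)$): $C_\calu$ is pseudo-finite, hence PAC, and a connected linear algebraic group over a perfect field is unirational, so $\HX^0$ has a Zariski-dense set of $C_\calu$-points and therefore lies in $\overline{\HX(C_\calu)}=\HX'$. Next, every geometric component of $\HX'$ contains a point of the dense set $\HX(C_\calu)\subseteq\GL_n(C_\calu)$, hence a $C_\calu$-point, hence is defined over $C_\calu$; thus $\pi_0(\HX')$ is a constant group scheme and, using PAC-ness to lift its $C_\calu$-points, $[\HX':\HX^0]=[\HX'(C_\calu):(\HX')^0(C_\calu)]=[\HX(C_\calu):\HX^0(C_\calu)]$ --- the first equality. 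Finally I must count: this index is the number of geometric components of $\HX$, equivalently of $W$, that meet $\HX(C_\calu)$, equivalently that contain a generic solution $B=Tc$ with $c\in H$. Under the bijection above, the component through $B$ corresponds to the restriction to $M_0KC_\calu$ of the automorphism $g\in H$ with $g(T)=B$; since $g$ commutes with $\sigma$, this restriction commutes with $\sigma|_{M_0KC_\calu}$, forcing the associated coset of $H_0$ in $G$ to be invariant under the conjugation action of $\sigma$ (well defined since $M_0$, hence $M$, is $\sigma$-stable). Conversely, for a $\sigma$-invariant coset the $\sigma$-invariance is precisely the statement that the corresponding ``twisted'' copy of the finite difference field $M_0KC_\calu$ is $KC_\calu$-isomorphic \emph{as a difference field} to the one inside $\calu$; then the genericity property (iii) of $\calu$ realizes it by an element $g\in H$, so its component is hit. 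This last equivalence --- matching ``realized by a solution in $\calu$'' with ``$\sigma$-invariant'' through the genericity axioms of $\calu$ --- is the step I expect to be the main obstacle; it is the Babbitt--Cohn phenomenon (the action of $\sigma$ on the core of $\overline{KC_\calu}$ over $KC_\calu$ controls which algebraic data occur) repackaged as a count.

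For part~4: $\sigma|_{D_L}$ fixes $C_{D_L}=C_L=E\subseteq C_\calu$, so $D_L\cap C_\calu=C_L$ and $\sigma|_{D_L}$ extends to an automorphism $\theta$ of $D_\calu=\overline{C_\calu}$ over $C_\calu$. The hypothesis $\overline{C_K}\subseteq C_\calu$ makes $\sigma$ trivial on $\overline{C_K}$ and is exactly what ensures $\sigma|_{D_L}$ agrees with the identity on the overlap $D_L\cap D_{KC_\calu}$, so $\theta$ may be chosen to be the identity on $D_{KC_\calu}$, the relative algebraic closure of $C_\calu$ in $KC_\calu$. Property (vii) of $\calu$ then extends $\theta$ to an automorphism $\tilde g$ of $\calu$ that is the identity on $KC_\calu$. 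Since $K(\calS)=KC_\calu(T)$ is stable under $\Aut(\calu/KC_\calu)$ --- any such automorphism sends $T$ to $Tc'$ with $c'\in\GL_n(C_\calu)$ --- the restriction $g:=\tilde g|_{K(\calS)}$ lies in $\Aut(KC_\calu(T)/KC_\calu)$ and restricts to $\sigma$ on $D_L$, as desired.
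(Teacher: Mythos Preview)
Your argument has a genuine gap at the very first structural claim, namely $W=T\HX$. Your justification is: ``$T\HX\subseteq W$; the two sides are irreducible of equal dimension over $KC_\calu$; and $W=\overline{\{T\}}\subseteq T\HX$.'' But ``equal dimension'' is exactly the content of part~1, so invoking it here is circular; and the inclusion $W=\overline{\{T\}}\subseteq T\HX$ confuses the $KC_\calu$-locus of $T$ (which is $W$) with the absolute Zariski closure of the point $T$ (which is just $\{T\}$). The set $T\HX$ is only defined over $KC_\calu(T)$, not over $KC_\calu$, so there is no reason it should contain the $KC_\calu$-closure of $T$. Without the torsor structure you have only the easy inequality $\dim\HX\le\dim W$.

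The paper supplies precisely the missing lower bound, and it does so by a genuinely model-theoretic step you omit: using the saturation of $\calu$, one chooses a second $K$-embedding $\varphi'$ of $L$ into $\calu$ that agrees with $\varphi$ on (the preimage of) $M_0$ and such that $\varphi'(L)$ and $\varphi(L)$ are linearly disjoint over $M_0$. Then $B=\varphi'(Y)^{-1}T$ lies in $\HX(C_\calu)$ (it is $M_g$ for the $KC_\calu$-automorphism $g$ of $K(\calS)$ sending $T$ to $\varphi'(Y)$), and linear disjointness gives ${\rm tr.deg}(KE(B)/KE)={\rm tr.deg}(L/KC_L)$, whence $\dim\HX\ge{\rm tr.deg}(L/KC_L)$. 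Once this is in place one does get $T\HX^0=W_0$, and your component-counting in parts~2 and~3 can proceed --- though note that your stronger assertion $[\HX:\HX^0]=[M_0:KE]$ in part~2 still presupposes that $\HX$ acts transitively on the geometric components of $W$, i.e.\ the full torsor statement $W=T\HX$, which you have not established; the paper only claims (and proves) the weaker divisibility $[\HX:\HX^0]\mid[M:KE]$. Your approaches to parts~3 and~4 (PAC/unirationality for density of $\HX^0(C_\calu)$; property~(vii) in place of the type argument via~(vi)) are reasonable variants of the paper's, but they inherit the dependence on part~1.
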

\begin{proof} 1. Choose another $K$-embedding $\varphi'$ of $L$ into
  $\calu$ which extends $\varphi$ on the relative algebraic closure of
  $KC_L$ in $L$, and is such that
  $\varphi'(L)$ and $\varphi(L)$ are linearly disjoint
  over $M_0$. Then $B=\varphi'(Y)^{-1}T\in \HX(C_\calu)$, and ${\rm
  tr.deg}(\varphi(KC_L)(B))/\varphi(KC_L))={\rm tr.deg}(L/KC_L)$. Thus
  $\dim(\HX)={\rm tr.deg}(L/KC_L)$.\\[0.1in]
2. As $M_0\subset K\varphi(L)$, we obtain that $[M_0:K\varphi(C_L)]$
is
  finite and $\si(M_0)=M_0$. Furthermore, $\si(M)=M$ (see Thm 7.16.V in
  \cite{cohn}). The algebraic group $\HX$ is defined as the set of matrices of
  $\GL_n$ which leaves the algebraic set $W$ (the algebraic
  locus of $T$ over $K\varphi(C_L)$) invariant.

Hence $\HX^0$ is the
  subgroup of $\HX$ which leaves all absolutely irreducible components of $W$
  invariant. Its index in $\HX$ therefore divides $[M:K\varphi(C_L)]$.
\\[0.1in]
3. The first equality follows from the fact that $\HX^0(C_\calu)$
and
  $\HX'(C_\calu)$ are Zariski dense in $\HX^0$ and $\HX'$
  respectively. Some of the (absolutely irreducible) components of $W$
  intersect $\calS$ in the
  empty set. Indeed, let $W_0$ be the component of $W$ containing $T$,
  let $W_1$ be another component of $W$ and $\tau\in
  {\calG}al(M/K\varphi(C_L))$ such that $W_1=W_0^\tau$. Then $W_1$ is
  defined over $\tau(M_0)$. If $\tau$ defines a (difference-field)
  isomorphism between $M_0$ and $\tau(M_0)$, then $\tau$ extends to an
  isomorphism between $K\varphi(L)$  and a regular extension of
  $K\varphi(C_L)\tau(M_0)$, and
  therefore $W_1\cap \calS\neq \emptyset$. Conversely, if $B\in W_1\cap
  \calS$, then $B^{-1}T\in \HX(C_\calu)$, so that $B$ is a generic of
  $W_1$.  The difference fields
  $K\varphi(C_L)(B)$ and $K\varphi(L)$ are therefore isomorphic (over
  $K\varphi(C_L)$), and $\tau(M_0)\subset K\varphi(C_L)(B)$. Hence the
  difference subfields $M_0$ and $\tau(M_0)$ of $M$ are
  $K\varphi(C_L)$-isomorphic. One verifies that $M_0$ and $\tau(M_0)$
  are isomorphic over $K\varphi(C_L)$ if and only if $\si^{-1} \tau^{-1}
  \si\tau\in \calG al(M/M_0)$, if and only if the coset $\tau\calG
  al(M/M_0)$ is invariant under the action of $\si$ by conjugation.\\[0.1in]
4. We know that the algebraic closure $\overline{K}$ of $K$ and
  $D_\calu$ are linearly disjoint over
  $C_{\overline{K}}=\overline{C_K}$. Let $a\in \varphi(D_L)$ generates
  $\varphi(D_L)$ over $\varphi(C_L)$. By \ref{mt1}(vi),
  $tp(a/\overline{K}C_\calu)=tp(\si(a)/\overline{K}C_\calu)$, and therefore there is
 $\theta\in\Aut(\calu/\overline{K}C_\calu)$ such that
  $\theta(a)=\si(a)$. Thus $T^{-1}\theta(T)\in H$.
\end{proof}
\begin{remarks}\label{zgprem}\begin{itemize}
\item[1.] Even when the algebraic closure of $C_K$ is
  contained in $C_\calu$, we still cannot in general conclude that
  $\HX'=\HX$.
\item[2.]
The isomorphism type of the algebraic group $\HX$ only depends on
the
  isomorphism type of the difference field $K$ (and on the matrix $A$). The
  isomorphism type of the algebraic group $\HX'$ does however depend on
  the embedding of $K$ in $\calu$, that is, on the isomorphism type of
  the difference field ${\rm Core}(\overline{K}/K)$. Indeed, while we
  know the isomorphism type of the difference field $M_0$ over $K\varphi(C_L)$, we do not
  know the isomorphism type of the difference field $M$ over
  $K\varphi(C_L)$, and in view of \ref{zggp1}.3, if $\calG
  al(M/K\varphi(C_L))$ is not abelian, it may happen that
  non-isomorphic extensions of $\si$ to $M$ yield different Galois
  groups.
\item[3.]
If  the action of $\si$ on $\Gal({\rm
Core}(\overline{K}/K)/K)$ is trivial and $\Gal({\rm
Core}(\overline{K}/K)/K)$ is abelian, then $$\HX=\HX'\quad \hbox{and}\quad
[\HX:\HX^0]=[M_0:K\varphi(C_L)].$$ Indeed, by Lemma \ref{zlem1}, 
$M_0$ is Galois over $K\varphi(C_L)$ with abelian Galois group $G$ and
$\si$ acts trivially on $G$. The result follows by \ref{zggp1}.3. 
Thus we
  obtain equality of $\HX$ and $\HX'$ in two important classical cases:
\begin{itemize}
\item[a.]$K=\CX(t)$, $C_K=\CX$ and $\si(t)=t+1$.
\item[b.]$K=\CX(t)$, $C_K=\CX$ and $\si(t)=qt$ for some $0\neq q\in\CX$,
  $q$ not a root of unity.\end{itemize}
\item[4.]If $B\in \calS$, then the above construction can be repeated,
  using $B$ instead of $T$. We then obtain an algebraic group $\HX_1$,
  with $\HX_1(C_\calu)\simeq \Aut(KC_\calu(\calS)/KC_\calu)$. Since
  $KC_\calu(B)=KC_\calu(T)$, the algebraic groups
  $\HX_1$ and $\HX$ are isomorphic (via $B^{-1}T$).
\item[5.]In the next subsection, we will show that the algebraic group
  $\HX$ and the algebraic group $G_{R'}$ introduced in section
  \ref{galoissec} are isomorphic when $C_{R'}=C_K=D_K$.
\end{itemize}\end{remarks}
\subsection{More on Picard-Vessiot rings}\label{zpvr}

Throughout the rest of this section, we fix a difference ring $K$,
some $A\in \GL_n(K)$, $R=K[Y,\det(Y)^{-1}]$ as above, with
$\si(Y)=AY$, and $R'=R/q$ a Picard-Vessiot ring for $\si(X)=AX$ over
$K$. We denote the image of $Y$ in $R'$ by $y$. We keep the notation
introduced
in the previous subsections.\\[0.1in]
If $q$ is not a prime ideal, then there exists $\ell$ and a prime
$\si^\ell$-ideal $p$ of $R$ which is a maximal $\si^\ell$-ideal of
$R$, such that $q=\bigcap_{i=0}^{\ell
  -1}\si^i(p)$, and $R'\simeq \oplus_{i=0}^{\ell -1}R_i$, where
$R_i=R/\si^i(p)$ (see  Corollary 1.16 of \cite{PuSi}. One
  verifies that the second proof does not use the fact that $C_K$ is
  algebraically closed). Thus the $\si^\ell$-difference ring
$R_0$ is a Picard-Vessiot ring for
  the difference equation $\si^\ell(X)=\si^{\ell-1}(A)\cdots\si(A)AX$ over
  $K$.  We denote $\si^{\ell-1}(A)\cdots\si(A)A$ by $A_\ell$. \\[0.1in]
We will identify $R'$ with $\oplus_{i=0}^{\ell
    -1}R_i$, and denote by $e_i$ the primitive
  idempotent of $R'$ such that $e_iR'=R_i$. Then $e_i=\si^i(e_0)$. We
  will denote by $R^*$ the ring of quotients
  of $R'$, i.e., $R^*=\oplus_{i=0}^{\ell -1}R_i^*$, where $R_i^*$ is the
  field of fractions of $R_i$. The difference ring $R^*$ is also called
  the {\em total
    Picard-Vessiot ring of $\si(X)=AX$ over $K$}. There are two
  numerical invariants associated to $R'$: the number $\ell=\ell(R')$,
  and the number $m(R')$ which is the product of $\ell(R')$ with
  $[D_{R^*_0}:D_KC_{R^*_0}]$. We call $m(R')$ the {\em $m$-invariant of
    $R'$}. We will be considering other Picard-Vessiot
  rings for $\si(X)=AX$, and will use this notation for them as well.\\[0.1in]
Recall that the {\em Krull dimension} of a ring $S$ is the maximal
integer $n$ (if it exists) such that there is a (strict) chain  of
prime ideals of $S$ of length $n$. We denote it by $\Krd(S)$. If $S$
is a domain, and is finitely generated over some subfield $k$, then
$\Krd(S)$ equals the transcendence degree over $k$ of its field of
fractions. Observe that if $S$ is a domain of finite Krull
dimension, and $0\neq I$ is an ideal of $S$, then
$\Krd(S)>\Krd(S/I)$. Also, if $S=\oplus_i S_i$, then
$\Krd(S)=\sup\{\Krd(S_i)\}$.
\begin{lem}\label{zpvlem2}
\begin{itemize}
\item[1.]
$C_{R'}$ is a finite algebraic extension of $C_K$, and is linearly
disjoint from $K$ over $C_K$ (inside $R'$).
\item[2.]If $C_{R'}\otimes_{C_K}D_K$ is a domain, then $R'$ is a Picard-Vessiot ring for $\si(X)=AX$ over $KC_{R'}$.
\end{itemize}
\end{lem}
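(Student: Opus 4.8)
The plan is to treat the two parts in order, with essentially all of the work in part~1. The linear disjointness of $C_{R'}$ from $K$ over $C_K$ (inside $R'$) is the standard minimal-relation argument: given a $K$-linear dependence $\sum_i a_i c_i=0$ among $c_i\in C_{R'}$ of minimal length, normalized so that some $a_{i_0}=1$, applying $\si$ and subtracting yields the shorter relation $\sum_i(\si(a_i)-a_i)c_i=0$, so by minimality $\si(a_i)=a_i$, i.e.\ all $a_i\in C_K$; in particular, elements of $C_{R'}$ algebraically independent over $C_K$ remain so over $K$. That $C_{R'}$ is a field is the usual observation that in the $\si$-simple ring $R'$ a nonzero constant generates a $\si$-ideal and is therefore a unit.

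The substantive point is $[C_{R'}:C_K]<\infty$, and I would first prove that $C_{R'}$ is algebraic over $C_K$. If $t\in C_{R'}$ is algebraic over $K$, then applying $\si$ to its monic minimal polynomial over $K$ and using $\si(t)=t$ shows that polynomial is $\si$-invariant, hence lies in $C_K[X]$, so $t$ is algebraic over $C_K$. If some $t\in C_{R'}$ were transcendental over $C_K$ it would then be transcendental over $K$, so $K[t]\subset R'$ is a polynomial ring and the induced morphism ${\rm Spec}(R')\to\AX^1_K$ is dominant; by Chevalley's theorem its image is cofinite, so for all but finitely many $\lambda\in C_K$ the element $t-\lambda$ lies in a proper prime of $R'$, hence is a non-unit --- but $t-\lambda$ is a nonzero constant, so $(t-\lambda)R'$ is a nonzero $\si$-ideal, hence all of $R'$ by $\si$-simplicity, a contradiction since $C_K$ is infinite. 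Thus $C_{R'}$ is algebraic over $C_K$. For finiteness, write $R'\cong\bigoplus_i R_i$ with the $R_i$ the domains $R/\si^i(p)$ of Corollary~1.16 of \cite{PuSi} (whose relevant proof, as noted in this section, does not use algebraic closedness of $C_K$); each $R_i$ is a finitely generated $K$-algebra. The injection $C_{R'}\otimes_{C_K}K\hookrightarrow R'\hookrightarrow\bigoplus_i{\rm Frac}(R_i)$ provided by linear disjointness lands, by algebraicity of $C_{R'}$ over $C_K$, inside $\bigoplus_i\widetilde K_i$, where $\widetilde K_i$ is the relative algebraic closure of $K$ in ${\rm Frac}(R_i)$. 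Since ${\rm Frac}(R_i)$ is a finitely generated field extension of $K$, each $\widetilde K_i$ is finite over $K$, whence $[C_{R'}:C_K]=\dim_K(C_{R'}\otimes_{C_K}K)<\infty$.

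For part~2, the plan is to show that the hypothesis forces $KC_{R'}$ to be a field, after which $R'$ is a Picard-Vessiot ring over it almost by definition. By part~1, $C_{R'}=C_K(\theta)$ is finite separable over $C_K$; let $m\in C_K[X]$ be the minimal polynomial of $\theta$. The irreducible factors of $m$ over $K$ have coefficients that are symmetric functions of roots of $m$, hence are algebraic over $C_K$ while lying in $K$, hence lie in $K\cap\overline{C_K}=D_K$ and remain irreducible over $D_K$; so $m$ has the same factorization into irreducibles over $D_K$ as over $K$. In particular $m$ is irreducible over $K$ iff it is irreducible over $D_K$ iff $D_K[X]/(m)=D_K\otimes_{C_K}C_{R'}$ is a domain --- which is precisely the hypothesis. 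Hence $K\otimes_{C_K}C_{R'}=K[X]/(m)$ is a field, and by linear disjointness $KC_{R'}\cong K\otimes_{C_K}C_{R'}$ is a field, on which $\si$ restricts since $\si(KC_{R'})=KC_{R'}$. Finally $R'=KC_{R'}[y,\frac{1}{\det(y)}]$ (the same generators, since $C_{R'}\subset R'$), with $y\in\GL_n(R')$, $\si(y)=Ay$, $A\in\GL_n(K)\subset\GL_n(KC_{R'})$, and $R'$ is $\si$-simple --- a property of the difference ring $R'$ that does not depend on the base field; hence $R'$ is a Picard-Vessiot ring for $\si(X)=AX$ over $KC_{R'}$.

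The main obstacle is the finiteness assertion of part~1, and within it the exclusion of transcendental constants by the Chevalley/$\si$-simplicity argument; once $[C_{R'}:C_K]<\infty$ is in hand, part~2 is an exercise in the elementary Galois theory of $C_{R'}/C_K$ combined with linear disjointness.
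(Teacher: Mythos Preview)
Your proof is correct. For linear disjointness you use the same minimal-relation argument as the paper. The main difference is in excluding transcendental constants: the paper first passes to the domain $R_0$ and uses that any subfield $F$ of a finitely generated $K$-domain containing $K$ is algebraic over $K$ (via the identity $\Krd(R_0)={\rm tr.deg}_K({\rm Frac}\,R_0)={\rm tr.deg}_F({\rm Frac}\,R_0)$, which forces ${\rm tr.deg}_K F=0$), whereas you work directly in $R'$ and combine Chevalley's theorem with $\si$-simplicity to show every $t-\lambda$ with $\lambda\in C_K$ is a unit. Your route avoids the reduction to a domain at the cost of a heavier tool; the paper's route is shorter once one recalls the subfield fact. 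You are also more explicit than the paper both on the finiteness of $[C_{R'}:C_K]$ (the paper's proof effectively stops at ``algebraic'' and leaves the finite-degree step implicit in the same subfield fact) and on part~2, where the paper merely asserts that the hypothesis makes $K[C_{R'}]$ a field; your observation that the irreducible factors of the minimal polynomial of a generator of $C_{R'}$ over $K$ already lie in $D_K[X]$ is precisely the missing justification.
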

\begin{proof}  1. We know by Lemma 1.7 of \cite{PuSi} that $C_{R'}$ is a
  field. Assume by way of contradiction that $C_{R'}$ and $K$ are not
  linearly disjoint over $C_K$, and choose $n$ minimal such that there
  are $a_1,\ldots,a_n\in C_{R'}$ which are $C_K$-linearly independent,
  but not $K$-linearly independent. Let $0\neq c_1,\ldots, c_n\in K$ be such that
  $\sum_{i=1}^n a_ic_i=0$. Multiplying by $c_1^{-1}$, we may
  assume $c_1=1$. Then $\si(\sum_{i=1}^n a_ic_i)=\sum_{i=1}^n
  a_i\si(c_i)=0$, and therefore $\sum_{i=2}^na_i(\si(c_i)-c_i)=0$. By
  minimality of $n$, all $(\si(c_i)-c_i)$ are $0$, i.e., all $c_i\in
  C_K$, which gives us a contradiction.

Observe that $e_0C_{R'}\subset Fix(\si^\ell)(R_0)$, and we may
therefore replace $R'$ by the domain $R_0$. Since $R_0$ is a
finitely generated $K$-algebra, we know that its Krull dimension
equals the transcendence degree over $K$ of its field of fractions.
Thus $R_0$ cannot contain a subfield which is transcendental over
$K$, i.e., the elements of
$Fix(\si^\ell)(R_0)$ are algebraic over $K$.  his furthermore implies that  $Fix(\si^\ell)(R_0)$ is an algebraic extension of $Fix(\si^\ell)(K)$. Since the latter field is an algebraic extension of $C_K$, we have the conclusion.\\[0.1in]
2. Our hypothesis implies that $K[C_{R'}]$ is a field. Hence $R'$ is
a simple difference ring containing $KC_{R'}$, and is therefore a
Picard-Vessiot ring for $\si(X)=AX$ over $KC_{R'}$. \end{proof}

\begin{lem}\label{zpvlem1}
\begin{itemize}
\item[1.] $C_{R'}=C_{R^*}$.
\item[2.]$Fix(\si^\ell)(e_0R^*)=e_0C_{R'}$.
\item[3.]$D_{R^*}=\oplus_{i=0}^{\ell -1}D_{e_iR^*}$.
\end{itemize}
\end{lem}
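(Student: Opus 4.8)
The plan is to use repeatedly two structural features of the set-up: $R'$ is a \emph{simple} difference ring (being a Picard-Vessiot ring), and $\si$ acts on $R^{*}=\bigoplus_{i=0}^{\ell-1}R_{i}^{*}$ by cyclically permuting the fields $R_{i}^{*}$, with $\si^{i}(R_{0}^{*})=R_{i}^{*}$ and $\si(e_{i})=e_{i+1}$ (all indices read modulo $\ell$); in particular $\si^{\ell}$ stabilises each $R_{i}^{*}$ and fixes each $e_{i}$. For part~1 I would first note that $\si$, being an automorphism of $R'$, extends uniquely to an automorphism of the total ring of fractions $R^{*}$, so $C_{R'}\subseteq C_{R^{*}}$ is clear. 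For the reverse inclusion, take $c\in C_{R^{*}}$ and consider the ``denominator'' ideal $J=\{a\in R'\mid ac\in R'\}$ of $R'$. Writing $c=a/s$ with $s$ a non-zero-divisor of $R'$ shows $s\in J$, so $J\neq 0$, and $J$ is a $\si$-ideal since $\si(a)c=\si(a)\si(c)=\si(ac)\in R'$ whenever $a\in J$. As $R'$ is $\si$-simple, $J=R'$, hence $1\in J$ and $c\in R'$, i.e.\ $c\in C_{R'}$. (This is exactly the argument used at the end of the proof of Proposition~\ref{p2}.)

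For part~2, the inclusion $e_{0}C_{R'}\subseteq Fix(\si^{\ell})(e_{0}R^{*})$ is immediate from $\si^{\ell}(e_{0})=e_{0}$ and $\si(c)=c$ for $c\in C_{R'}$. For the converse I would use the ``trace'' $b\mapsto\tilde b:=\sum_{i=0}^{\ell-1}\si^{i}(b)$: if $b\in Fix(\si^{\ell})(e_{0}R^{*})$ then $\si^{i}(b)\in R_{i}^{*}$, and a shift of the summation index together with $\si^{\ell}(b)=b$ gives $\si(\tilde b)=\tilde b$, so $\tilde b\in C_{R^{*}}=C_{R'}$ by part~1; since $e_{0}$ annihilates $R_{i}^{*}$ for $i\neq 0$, we get $e_{0}\tilde b=e_{0}b=b$, whence $b\in e_{0}C_{R'}$. (The core identity $Fix(\si^{\ell})(R_{0})=e_{0}C_{R'}$ can also be read off from the fact that $R_{0}$ is a Picard-Vessiot ring for $\si^{\ell}(X)=A_{\ell}X$ over $K$, to which part~1 applies inside $(\calu,\si^{\ell})$.)

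For part~3, each $e_{i}$ lies in $D_{R^{*}}$ since $\si^{\ell}(e_{i})=e_{i}$, hence $D_{R^{*}}=\bigoplus_{i}e_{i}D_{R^{*}}$ and it suffices to identify $e_{i}D_{R^{*}}$ with $D_{e_{i}R^{*}}=D_{R_{i}^{*}}$. Given $a\in D_{R^{*}}$ with $\si^{m}(a)=a$, $m\neq 0$, write $a=\sum_{j}a_{j}$ with $a_{j}=e_{j}a\in R_{j}^{*}$; comparing $R_{j}^{*}$-components of the equation $\si^{m}(a)=a$ and using $\si^{m}(R_{j}^{*})=R_{j+m}^{*}$ gives $\si^{m}(a_{j})=a_{j+m}$ for all $j$, and iterating this $\ell$ times yields $\si^{\ell m}(a_{j})=a_{j}$ with $\ell m\neq 0$, so $a_{j}\in D_{R_{j}^{*}}$; the reverse inclusion $D_{R_{j}^{*}}\subseteq e_{j}D_{R^{*}}$ is clear. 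Everything here is elementary; the only delicate point is keeping track of the cyclic $\si$-action on the components in parts~2 and~3 — in particular the observation that for an element supported on a single component $R_{i}^{*}$ a relation $\si^{m}(a)=a$ forces $\ell\mid m$ — so I do not anticipate any genuine obstacle.
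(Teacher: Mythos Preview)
Your proposal is correct and follows essentially the same route as the paper: the denominator-ideal argument for part~1, the ``trace'' $\sum_{i=0}^{\ell-1}\si^{i}(b)$ for part~2, and the observation $\si^{m\ell}(e_ia)=e_ia$ for part~3. The paper's write-up is terser (in part~3 it simply notes $\si^{m\ell}(e_ia)=e_ia$ directly, without decomposing via $\si^{m}(a_j)=a_{j+m}$), but the substance is identical.
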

\begin{proof} 1.  If $c\in C_{R^*}$, then $c$ can be represented by some
  $\ell$-tuple $(\frac{a_0}{b_0},\ldots,\frac{a_{\ell -1}}{b_{\ell -1}})$,
  where $a_i,b_i\in R_i$, and $b_i\neq 0$. Thus the ideal $I=\{d\in
  R'\mid dc\in R'\}$ is a $\si$-ideal of $R'$ and contains the element
  $b=(b_0,\ldots,b_{\ell -1})\neq 0$. Since $R'$ is simple, $1\in I$,
  i.e., $c\in R'$.

\noindent 2. Assume that $a\in e_0R^*$ satisfies $\si^\ell(a)=a$.
Then $a=e_0a$,
 $\sum_{i=0}^{\ell
  -1}\si^i(e_0a)$ is fixed by $\si$, and therefore belongs to $C_{R'}$.
Hence $a\in e_0C_{R'}$. \\[0.1in]
3. If  $a\in R^*$ satisfies $\si^m(a)=a$ for some $m$, then
 $\si^{m\ell}(e_ia)=e_ia$. \end{proof}
\begin{remark}\label{zpvrem1} Observe that  $\ell$ and the isomorphism type
of the $K$-$\si^\ell$-difference algebra $R_0$ completely determine
the isomorphism type of the difference algebra $R'$. Indeed, for
each $i=1,\ldots,\ell-1$, one chooses a copy $R_i$ of the
 domain $R_0$, together with an isomorphism
$f_i:R_0\to R_i$ which extends $\si^i$ on $K$. This $f_i$ then
induces an automorphism $\si^\ell$ of $R_i$. One then defines $\si$
on $\oplus_{i=0}^{\ell -1}R_i$ by setting
$\si(a_0,\ldots,a_{\ell-1})=(f_1(a_0),f_2f_1^{-1}(a_1),\ldots,
\si^\ell f_{\ell -1}^{-1}(a_{\ell -1}))$.
\end{remark}
\begin{prop}\label{zp2}Let $K\subset K_1$ be difference fields of
  characteristic $0$ where $K_1=K(C_{K_1})$, and assume that $C_K=D_K$.  Then
  $R'\otimes_KK_1=\oplus _{i=1}^d R'_i$, where each $R'_i$ is a
  Picard-Vessiot ring for $\si(X)=AX$ over $K_1$, and $d\leq
  [C_{R'}:C_K]$. Moreover, each $R'_i$ has the same Krull-dimension and
  $m$-invariant as $R'$. \end{prop}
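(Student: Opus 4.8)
The plan is to analyze the difference ring $R'\otimes_K K_1$ by combining the structural results on Picard--Vessiot rings from \cite{PuSi} with Lemma~\ref{lem1.11} (restated above), exactly as in the proof of Proposition~\ref{p2}, but now keeping track of idempotents. First I would observe that since $K_1=K(C_{K_1})$, the ring $R'\otimes_K K_1$ is a localization of $R'\otimes_{C_K}C_{K_1}$: indeed $K_1$ is generated over $K$ by $C_{K_1}$, and every element of $K_1$ is a quotient $a/b$ with $a,b\in K[C_{K_1}]$. Now $R'$ is a Picard--Vessiot ring over $K$ with $C_{R'}$ a field (Lemma 1.7 of \cite{PuSi}), but $C_{R'}$ need not equal $C_K$; however, by Lemma~\ref{zpvlem2}.1 it is a \emph{finite} extension of $C_K$ linearly disjoint from $K$ over $C_K$. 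The key point is that, although $R'$ itself is simple over $K$, after extending constants it will generally break into a direct sum; the number of summands is governed by how the finite field extension $C_{R'}$ sits over $C_K$ inside $C_{K_1}$.

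Next I would apply Lemma~\ref{lem1.11}: for $R'$ a Picard--Vessiot ring over $K$ with $C_{R'}=C_K$ the lemma says every $\sigma$-ideal of $R'\otimes_{C_K}A$ is generated by its intersection with $A$. But here we only know $C_{R'}$ is finite over $C_K$, not equal to it. To get around this I would argue as in Corollary~1.16 of \cite{PuSi}: since $C_{R'}$ is a finite (separable, characteristic zero) field extension of $C_K$ on which $\sigma$ acts, and $C_K=D_K$ forces the $\sigma$-action on $C_{R'}$ to have some structure, one decomposes $R'\otimes_{C_K}C_{K_1}$ by chasing idempotents coming from $C_{R'}\otimes_{C_K}C_{K_1}$. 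Each primitive idempotent $f_j$ of $C_{R'}\otimes_{C_K}C_{K_1}$ cuts out a summand; the hypothesis $C_K=D_K$ is what lets us conclude that the $\sigma$-orbits of these idempotents are singletons (so each summand is itself $\sigma$-stable) rather than merely permuted cyclically. The number $d$ of summands is then at most $[C_{R'}:C_K]$, since that is an upper bound for the number of primitive idempotents of $C_{R'}\otimes_{C_K}C_{K_1}$. For each summand $R'_i$ one checks it is simple over $K_1$: any ideal is generated by its intersection with the image of $R'\otimes_{C_K}C_{K_1}$ (using that $R'_i$ is a localization, as in the end of the proof of Proposition~\ref{p2}), and $R'$ being simple over $K$ together with Lemma~\ref{lem1.11} forces triviality; also $R'_i$ has the right form $K_1[y,\det(y)^{-1}]$-type presentation with a fundamental solution matrix, so it is a Picard--Vessiot ring for $\sigma(X)=AX$ over $K_1$.

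Finally, for the equality of Krull dimension and $m$-invariant: the Krull dimension of $R'$ equals $\mathrm{tr.deg}$ of the fraction field of $R_0$ over $K$ (using $\Krd(S)=\sup\Krd(S_i)$ and that each $R_i$ is a domain finitely generated over $K$), and tensoring with $K_1$ over $K$, which is a separable (indeed, since $C_K=D_K$ and char $0$, regular-ish) field extension, does not change transcendence degrees of the pieces — so $\Krd(R'_i)=\Krd(R')$. For the $m$-invariant $m(R')=\ell(R')\cdot[D_{R^*_0}:D_KC_{R^*_0}]$ one uses Lemma~\ref{zpvlem1} to reduce to tracking $D$ of the fraction fields of the domain summands; again extending by $C_{K_1}$ does not alter these finite degree data because the relevant field extensions are already algebraic and linearly disjoint from the new constants. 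I expect the \textbf{main obstacle} to be the bookkeeping at the level of idempotents: precisely controlling the $\sigma$-action on the primitive idempotents of $C_{R'}\otimes_{C_K}C_{K_1}$ and proving that each summand is genuinely a Picard--Vessiot ring (simplicity over $K_1$) rather than merely a weak one — this is where the hypothesis $C_K=D_K$ must be used in an essential way, and where a naive imitation of Proposition~\ref{p2} (which assumed $C_{R'}=C_K$) breaks down.
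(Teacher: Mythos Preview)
Your overall architecture---decompose via the idempotents coming from $C_{R'}\otimes_{C_K}C_{K_1}$, then verify each summand is simple over $K_1$---is the same as the paper's. But you misidentify where the hypothesis $C_K=D_K$ enters, and this leaves a genuine gap in the simplicity argument.

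The idempotents of $C_{R'}\otimes_{C_K}C_{K_1}$ are \emph{automatically} fixed by $\si$: both tensor factors are constant fields, so $\si$ acts as the identity on the entire tensor product. Your claim that ``$C_K=D_K$ is what lets us conclude that the $\si$-orbits of these idempotents are singletons'' is therefore incorrect. The actual role of $C_K=D_K$ is twofold. First, it says $C_K$ is relatively algebraically closed in $K$ (since $D_K$ is that relative algebraic closure), so in characteristic~$0$ the extension $K/C_K$ is regular and $K\otimes_{C_K}C_{K_1}$ is a domain---this is what makes your localization step legitimate. Second, and this is the point you are missing, $C_K=D_K$ makes Lemma~\ref{zpvlem2}.2 apply: $R'$ is then a Picard--Vessiot ring \emph{over $KC_{R'}$}, with constant field exactly $C_{R'}$. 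That rebasement is what unlocks Lemma~\ref{lem1.11}: you apply it not over $C_K$ (where, as you correctly noticed, $C_{R'}\neq C_K$ obstructs you) but over $C_{R'}$, to $R'\otimes_{C_{R'}}C_i$ where the $C_i$ are the residue fields of $C_{R'}\otimes_{C_K}C$ (here $C$ is the relative algebraic closure of $C_K$ in $C_{K_1}$). This gives simplicity of each summand over $KC_i$ directly; then Proposition~\ref{p2} takes you from $KC_i$ up to $K_1C_i$. Your proposal never arrives at this change of base ring and so does not close the gap you yourself flagged.

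The $m$-invariant step is also substantially harder than your sketch suggests; it occupies roughly half of the paper's proof. After writing $R'_i=\oplus_{j}R_j\otimes_{C_{R'}}C_i$, one must further decompose each domain $R_0\otimes_{C_{R'}}C_i$ into $\si^{\ell s}$-simple domains $S_0,\ldots,S_{s-1}$, identify $s$ with $[D_{R_0^*}\cap C_i:C_{R'}]$ by an explicit description of the minimal primes, and then prove $D_{S_0^*}=eC_iD_{R_0^*}$ by a direct computation in the tensor product. The point is that $\ell(R'_i)$ genuinely changes (it becomes $\ell s$), so one must check that the product $\ell\cdot[D_{R_0^*}:C_{R'}]$ is conserved; the phrase ``linearly disjoint from the new constants'' is the right intuition but is not yet a proof.
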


\begin{proof} Our assumption implies that
  $K\otimes_{C_K}C_{K_1}$ is a domain. Let $C$ be the relative algebraic closure of $C_K$ in $C_{K_1}$. Then $K(C)=K[C]$, and $R'\otimes_KK(C)\simeq
  R'\otimes_{C_K}C$.

Let $a\in C_{R'}$ be such that $C_{R'}=C_K(a)$ and let
  $f(X)\in C_{K}[X]$
  be its minimal polynomial over $C_K$.
Let $g_1(X),\ldots,g_d(X)$
  be the irreducible factors of $f(X)$ over $C$. Then
  $f(X)=\prod_{i=1}^d g_i(X)$, and $C_R'\otimes_{C_K}C\simeq
  \oplus_{i=1}^d C_i$,
  where $C_i$ is generated over $C$ by a root of
  $g_i(X)=0$. Indeed, identifying $C$ with $1\otimes C$, every prime ideal of $C_{R'}\otimes_{C_K}C$
  must contain some $g_i(a\otimes 1)$; on the other hand,  each
  $g_i(a\otimes 1)$ generates a maximal ideal of $C_{R'}\otimes_{C_K}C$.
Thus  $$ R'\otimes_{C_K}C\simeq R'\otimes_{C_{R'}}
(C_{R'}\otimes_{C_K}C)\simeq \oplus_{i=1}^dR'\otimes_{C_{R'}}C_i.$$
By Lemmas \ref{lem1.11} and \ref{zpvlem2}, each
  $R'\otimes_{C_{R'}}C_i=R'_i$ is a simple
difference ring, with field of constants $C_i$. Hence $R'_i$ is a
  Picard-Vessiot
ring for $\si(X)=AX$ over $KC$ (and also over $KC_i$). Note that
$d\leq \deg(f)=[C_{R'}:C_K]$, and that $\Krd(R'_i)=\Krd(R')$
(because
  $KC$ is algebraic over $K$, and $R'_i$ is finitely generated over $K$).

By Proposition \ref{p2}, $R'_i\otimes_{KC_i}K_1C_i$ is a
Picard-Vessiot
 ring. Because $C_i$ and $K_1$ are linearly disjoint over $C$, and
 $C_i$ is algebraic over $C$, $KC_i\otimes_{KC}K_1\simeq K_1C_i$, and
 therefore
$$R'_i\otimes_{KC}K_1\simeq  R'_i\otimes_{KC_i}K_1C_i.$$
This shows that $R'\otimes_{K}K_1$ is the direct sum of
Picard-Vessiot rings over $K_1$.

Identifying $C_{R'}$ with $e_jC_{R'}=C_{R_j}$, we obtain
$$R'_i=(\oplus_{j=0}^{\ell-1}R_j)\otimes_{C_{R'}}C_i\simeq
\oplus_{j=0}^{\ell-1}R_j\otimes_{C_{R'}}C_i.$$ Each $R_j$ being a
Picard-Vessiot ring for $\si^\ell(X)=A_\ell X$, we know by
Proposition \ref{p2} that $R_j\otimes_{C_{R'}}C_i$ is also a
Picard-Vessiot ring for $\si^\ell(X)=A_\ell X$. Thus
$R_0\otimes_{C_{R'}}C_i=\sum_{j=0}^{s-1}S_j$, where each $S_j$ is a
simple $\si^{\ell s}$-difference ring, and a domain. Because all
rings $R_j$ are isomorphic over $C_{R'}$, and all $S_j$ are
isomorphic over $C_{R'}$, $m(R'_i)$ is the product of $\ell s$ with
$m(S_0)=[D_{S_0^*}:C_{S_0^*}]$, where $S_0^*$ is the field of
fractions of $S_0$.  To show that $m(R'_i)=m(R')$, it therefore
suffices to show that $sm(S_0)=m(R_0)$. By Lemma~\ref{zpvlem2}.2,
$Fix(\si^{\ell
  s})(S_0^*)=Fix(\si^\ell)(R_0^*\otimes_{C_{R'}}C_i)=Fix(\si)(R'\otimes_{C_{R'}}C_i)=C_i$.

We  know that $D_{R_0^*}$ is a (cyclic) Galois extension of
$C_{R'}=Fix(\si^\ell)(R_0^*)$, and is therefore linearly disjoint
from $C_i$ over $D_{R^*_0}\cap C_i=C'_i$. Write
$C'_i=C_{R'}(\alpha)$, and let $a,b\in R_0$, $b\neq 0$, be such that
(inside $R_0^*$), $C_{R'}(a/b)=C'_i$. The minimal prime ideals of
$R_0\otimes_{C_{R'}}C_i$ are the ideals $Q_0,\ldots, Q_{r-1}$, where
$r=[C'_i:C_{R'}]$ and $Q_k$ is  generated by $\si^{k\ell}(a)\otimes
1 -\si^{k\ell}(b)\otimes \alpha$. This shows that $r=s$, since $s$
is also the number of minimal prime ideals of
$R_0\otimes_{C_{R'}}C_i$.

Let $e$ be a primitive idempotent of $R_0\otimes_{C_{R'}}C_i$ such
that
  $S_0=e(R_0\otimes_{C_{R'}}C_i)$. Then $eC_iD_{R_0^*}$ is a subfield of
  $S_0^*$, contained in $D_{S_0^*}$, and its degree over
  $eC_i=Fix(\si^{\ell s})(S_0^*)$ is the quotient of
  $[D_{R_0^*}:C_{R'}]$ by $[C'_i:C_{R'}]$, i.e., equals $m(R_0)/s$. To
  finish the proof, it therefore suffices to show that
  $D_{S_0^*}=eC_iD_{R_0}^*$.

Assume  that $c\in R^*_0\otimes_{C_{R'}}C_i$
  satisfies $\si^m(c)=c$ for some $m\neq 0$. Write $c=\sum_{k}a_k\otimes c_k$, where
  the $a_k$ are in $R^*_0$, and the $c_k$ are in $C_i$ and are linearly
  independent over $C_{R'}$. Then $\si^m(c)=c=\sum_k\si^m(a_k)c_k$,
  which implies $\si^m(a_k)=a_k$ for all $k$, and all $a_k$'s are in
  $D_{R_0^*}$. As every element of $D_{S_0^*}$ is of the form $ec$ for
  such a $c$ (Lemma \ref{zpvlem1}.3), this shows that
  $D_{S_0^*}=eC_iD_{R_0}^*$.  This finishes the proof that
  $m(R'_i)=m(R')$.

Consider now $R'\otimes_{KC}K_1$. It is the direct sum of $\ell s$
$\si^{\ell s}$-difference rings, each one being isomorphic to
$S_0\otimes_{KC}K_1$. Because $K_1$ is a regular extension of $KC$,
$S_0\otimes_{KC}K_1$ is a domain, of Krull dimension equal to
$\Krd(S_0)=\Krd(R')$. Inside its field of fractions (a $\si^{\ell
s}$-difference field) $K_1$ and $S^*_0$ are linearly disjoint over
$KC$, which implies that $C_{K_1}C_i$ is the field of constants of
$S_0\otimes _{KC}K_1$, $C_{K_1}D_{S_0^*}$ is the field of elements
fixed by some power of $\si$, and
$[C_{K_1}D_{S_0^*}:C_{K_1}C_i]=[D_{S_0}^*:C_i]=m(S_0)$. This shows
that $m(R'_i\otimes_{KC}K_1)=m(R')$ and finishes the proof.
\end{proof}
\begin{prop}\label{zp3}Assume that $C_K=D_K$.  Then all Picard-Vessiot rings for $\si(X)=AX$
  over $K$ have the same Krull dimension and the same $m$-invariant. \end{prop}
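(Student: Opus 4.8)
The plan is to reduce the statement to a comparison between two Picard-Vessiot rings by passing to a common base field over which the constants are ``large enough,'' then transport back the numerical invariants. Concretely, let $R'=R/q$ and $\tilde R'=R/\tilde q$ be two Picard-Vessiot rings for $\sigma(X)=AX$ over $K$. First I would set $C=\overline{C_K}$ and form $K_1=K(C)$; since $C_K=D_K$, the ring $K\otimes_{C_K}C$ is a domain (this is exactly the hypothesis used to invoke Proposition~\ref{zp2}), so $K_1=K[C]$ is a field and $K_1$ is an algebraic extension of $K$. Applying Proposition~\ref{zp2} to both $R'$ and $\tilde R'$, we get $R'\otimes_K K_1\simeq\oplus_{i=1}^d R'_i$ and $\tilde R'\otimes_K K_1\simeq\oplus_{j=1}^{\tilde d}\tilde R'_j$, where each $R'_i$ and each $\tilde R'_j$ is a Picard-Vessiot ring for $\sigma(X)=AX$ over $K_1$, and crucially each $R'_i$ has the same Krull dimension and $m$-invariant as $R'$, and each $\tilde R'_j$ has the same Krull dimension and $m$-invariant as $\tilde R'$.

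The heart of the argument is then: all Picard-Vessiot rings for $\sigma(X)=AX$ over $K_1$ have the same Krull dimension and the same $m$-invariant. Here $C_{K_1}=C$ is algebraically closed, so we are in the classical setting of \cite{PuSi}: Picard-Vessiot rings over $K_1$ are unique up to $K_1$-difference isomorphism (\cite{PuSi}, Proposition 1.9 and Corollary 1.16), hence in particular all isomorphic, so they trivially share every invariant. Therefore $\Krd(R'_i)=\Krd(\tilde R'_j)$ and $m(R'_i)=m(\tilde R'_j)$ for all $i,j$, and pulling these equalities back through Proposition~\ref{zp2} gives $\Krd(R')=\Krd(R'_i)=\Krd(\tilde R'_j)=\Krd(\tilde R')$ and likewise $m(R')=m(\tilde R')$.

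One technical point I would verify carefully: that the ``Picard-Vessiot rings over $K_1$'' produced as the summands $R'_i$ really are Picard-Vessiot rings \emph{in the sense used here} for the original equation $\sigma(X)=AX$ (not some twisted equation), so that the uniqueness theorem over the algebraically closed field $C=C_{K_1}$ applies directly. This is already asserted inside the statement of Proposition~\ref{zp2} (``each $R'_i$ is a Picard-Vessiot ring for $\sigma(X)=AX$ over $K_1$''), so I would simply cite it; the only thing to double-check is that $R'_i=R'\otimes_{C_{R'}}C_i$ arises as a quotient of $K_1[Y,\det(Y)^{-1}]$ with the same $\sigma(Y)=AY$, which is clear since $R'$ itself is such a quotient and we are only extending constants.

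I expect the main obstacle to be essentially bookkeeping rather than a genuine difficulty: one must make sure the decomposition of Proposition~\ref{zp2} is applied to \emph{both} Picard-Vessiot rings with the \emph{same} field $K_1$, and that the phrase ``same Krull dimension and $m$-invariant'' there is read as an equality between the summands and the original ring (so that it can be chained across two different original rings via the common uniqueness statement over $K_1$). Since $C_K=D_K$ is the standing hypothesis, the domain condition needed to invoke Proposition~\ref{zp2} is automatic, and there is no further case analysis. Thus the proof is short: invoke Proposition~\ref{zp2} twice, invoke uniqueness of Picard-Vessiot rings over the algebraically closed field $C$, and combine.
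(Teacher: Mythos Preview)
Your proposal is correct and matches the paper's proof essentially line for line: set $C=\overline{C_K}$, apply Proposition~\ref{zp2} to each of the two Picard-Vessiot rings to decompose $R'\otimes_K KC$ and $\tilde R'\otimes_K KC$ into Picard-Vessiot rings over $KC$ with the same Krull dimension and $m$-invariant as the originals, then invoke Proposition~1.9 of \cite{PuSi} (uniqueness over an algebraically closed constant field) to identify the summands. The only cosmetic difference is notation ($K_1$ versus $KC$, $\tilde R'$ versus $R''$); the logical structure is identical, and your verification that $C_K=D_K$ guarantees $K\otimes_{C_K}C$ is a domain is exactly the point needed to justify the appeal to Proposition~\ref{zp2}.
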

\begin{proof} Let $C$ be the algebraic closure of $C_K$, and
  let $R''$ be a Picard-Vessiot ring for $\si(X)=AX$ over $K$. By
  Proposition \ref{zp2}, $R'\otimes_{K}KC$ is the direct sum of
  finitely many Picard-Vessiot rings for $\si(X)=AX$ over $KC$, and each
  of these rings has the same Krull dimension and $m$-invariant as
  $R'$. The same statement holds for $R''$. On the other hand, by
  Proposition 1.9 of \cite{PuSi}, all Picard-Vessiot rings over $KC$ are
  isomorphic.
  \end{proof}

\begin{cor}\label{zcor1}Assume $D_K=C_K$. Let $R''=K[V,\det(V)^{-1}]$,
  where $\si(V)=AV$, and assume that $\Krd(R'')=\Krd(R')$ and that $R''$ has no nilpotent elements. Then
  $R''$ is a finite direct sum of Picard-Vessiot rings for
  $\si(X)=AX$.\end{cor}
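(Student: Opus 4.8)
The plan is to present $R''$ as a quotient $R/J$ of the ring $R=K[Y_{i,j},\det(Y)^{-1}]$ (with $\sigma Y=AY$) by the $\sigma$-ideal $J$ which is the kernel of the difference $K$-algebra map $Y\mapsto V$; since $R''$ has no nilpotents, $J$ is radical. The strategy is then to break up $R''=R/J$ along the $\sigma$-orbits of its finitely many minimal primes and to check that the resulting direct factors are Picard-Vessiot rings. The only non-elementary input will be Proposition~\ref{zp3}: because $D_K=C_K$, every Picard-Vessiot ring for $\sigma(X)=AX$ over $K$ has one and the same Krull dimension, which by hypothesis equals $\delta:=\Krd(R')=\Krd(R'')$.

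First I would record the principle that carries Proposition~\ref{zp3} into the argument: \emph{every proper $\sigma$-ideal $I$ of $R$ satisfies $\Krd(R/I)\ge\delta$.} Indeed $I$ is contained in some maximal $\sigma$-ideal $q$, and $R/q$ is then a simple difference ring of the form $K[\overline Y,\det(\overline Y)^{-1}]$ with $\sigma\overline Y=A\overline Y$, i.e.\ a Picard-Vessiot ring for $\sigma(X)=AX$ over $K$; hence $\Krd(R/I)\ge\Krd(R/q)=\delta$. Now write $J=P_1\cap\cdots\cap P_r$, the $P_i$ being the (finitely many, as $R$ is Noetherian) minimal primes of $J$, which $\sigma$ permutes. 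For a $\sigma$-orbit $O$ among them set $Q_O=\bigcap_{P\in O}P$; this is a proper $\sigma$-ideal whose minimal primes are exactly the members of $O$, so $\Krd(R/Q_O)=\max_{P\in O}\Krd(R/P)$. As $\Krd(R/Q_O)\le\Krd(R/J)=\delta$, while $\Krd(R/Q_O)\ge\delta$ by the principle, and since $\sigma$ sends each $P\in O$ to another with a ring-isomorphic quotient, we get $\Krd(R/P)=\delta$ for every minimal prime $P$ of $J$.

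Next I would prove the splitting. If two distinct orbits $O\ne O'$ had $Q_O+Q_{O'}\ne R$, then $\sqrt{Q_O+Q_{O'}}$ would be a proper $\sigma$-ideal each of whose minimal primes $\mathfrak p$ contains a member of $O$ and a member of $O'$; two distinct minimal primes of $J$ being incomparable, $\mathfrak p$ strictly contains some minimal prime $P$ of $J$, whence $\Krd(R/\mathfrak p)<\Krd(R/P)=\delta$ (a nonzero ideal strictly lowers the Krull dimension of a finitely generated domain over a field), so $\Krd(R/\sqrt{Q_O+Q_{O'}})<\delta$, contradicting the principle. Thus the $Q_O$ are pairwise comaximal; since $J=\bigcap_O Q_O$ the Chinese Remainder Theorem gives $R''=R/J\simeq\prod_O R/Q_O$. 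Finally each $R/Q_O$ is simple as a difference ring: a $\sigma$-ideal strictly between $Q_O$ and $R$ pulls back to a proper $\sigma$-ideal $I$ of $R$ which, since $\sigma$ acts transitively on $O$ and $I\not\subseteq Q_O$, lies in no $P\in O$; then every minimal prime of $I$ strictly contains some $P\in O$, so $\Krd(R/I)<\delta$, again impossible. Hence each $R/Q_O$ is a simple difference ring of the required shape $K[Z,\det(Z)^{-1}]$ attached to a fundamental solution matrix, i.e.\ a Picard-Vessiot ring for $\sigma(X)=AX$, and $R''$ is their finite direct sum.

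The routine ingredients — identifying $R''$ with $R/J$, the Noetherian finiteness of the set of minimal primes, the Chinese Remainder Theorem, and the behaviour of Krull dimension under finite direct sums and under quotients of finitely generated domains — are standard or already recalled in the text. The step I expect to require the most care is the control of the \emph{intermediate} primes: a priori a prime over $J$ could contain minimal primes from two different $\sigma$-orbits, or a $\sigma$-ideal could shrink a single factor $R/Q_O$ without killing an entire orbit. Both possibilities are eliminated only by feeding Proposition~\ref{zp3} through the principle above, and the delicacy lies in keeping the bookkeeping with radicals, $\sigma$-orbits and comaximality consistent rather than in any single hard computation.
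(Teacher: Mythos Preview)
Your proof is correct and follows essentially the same route as the paper. Both arguments hinge on the single observation you call the ``principle'': any proper $\sigma$-ideal has quotient of Krull dimension at least $\delta$, which is exactly Proposition~\ref{zp3} applied to a maximal $\sigma$-ideal above it. The paper works inside $R''$ rather than in $R$, and packages the principle as the statement that every maximal $\sigma$-ideal of $R''$ equals some $I_P=\bigcap_i\sigma^i(P)$ for a minimal prime $P$; comaximality of distinct orbits and simplicity of each $R''/I_P$ then follow in one stroke from maximality, whereas you establish comaximality and simplicity by two separate appeals to the principle. This is a cosmetic difference only.
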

\begin{proof} Because $R''$ has no
  nilpotent elements and is Noetherian, $(0)$ is the intersection of the finitely many
  prime minimal ideals of $R''$. Let $\calP$ be the set
  of  minimal prime ideals of $R''$. Then the intersection of any proper
  subset of $\calP$ is not $(0)$, i.e., no element of $\calP$ contains
  the intersection of the other elements of $\calP$. Also, if $P\in
  \calP$, then
  $\si(P)\in \calP$, and there exists $m>0$ such that $\si^m(P)=P$. Then
  $I_P=\bigcap_{i=0}^{m-1}\si^i(P)$ is a $\si$-ideal, which is proper if
  the orbit of $P$ under $\si$ is not all of $\calP$. Observe that for
  each $P\in\calP$, $\Krd(R''/P)\leq {\Krd}(R''/I_P)\leq
  \Krd(R'')={\Krd}(R')$, and that for some $P$ we have equality.

If $I$ is a maximal $\si$-ideal of $R''$, then
${\Krd}(R''/I)=\Krd(R')={\Krd}(R'')$ by Proposition \ref{zp2}, and
  this implies that $I$ is contained in some $P\in \calP$. Hence
  $I=I_P$ and $R''/I_P$ is a Picard-Vessiot ring. If $I=(0)$, then we are finished. Otherwise, $\calP$ contains
  some element $P_1$ not in the orbit of $P$ under $\si$. Observe that
  $I_{P_1}$ is contained in some maximal $\si$-ideal of $R''$, and is
  therefore maximal, by the same
  reasoning. Since the intersection of
  any proper subset of $\calP$ is non-trivial, $I_P+I_{P_1}$ is a
  $\si$-ideal of $R''$ which contains properly $I_P$, and therefore
  equals $1$. If $P_1, \ldots ,P_r$ are representatives from the
  $\sigma$-orbits in $\calP$, the Chinese Remainder Theorem then yields
  $R''\simeq \oplus_{i = 1}^r R''/I_{P_i}$. \end{proof}

\begin{prop}\label{zp4} Assume $C_K=D_K$. Then $KC_L[R]$ is a
  Picard-Vessiot ring for $\si(X)=AX$ over $KC_L$,
$${\Krd}(R')={\rm tr.deg}(L/KC_L),\quad\hbox{and
}[D_L:C_L]=m(R').$$\end{prop}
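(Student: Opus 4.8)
The plan is to reduce the whole statement to one point — that $S:=KC_L[R]$, regarded as the subring $KC_L[Y,\frac{1}{\det Y}]$ of $L$, is a \emph{simple} difference ring — after which everything else is bookkeeping. Write $K_1=KC_L$. The argument of Lemma~\ref{zpvlem2}.1, carried out inside $L$, shows $C_L$ is linearly disjoint from $K$ over $C_K$, so $K_1=K\otimes_{C_K}C_L$; since $C_K=D_K$ the extension $K/C_K$ is regular, so $C_L$ is relatively algebraically closed in $K_1$, i.e. $C_{K_1}=D_{K_1}=C_L$. Hence $S$ is a domain with ${\rm Frac}(S)=L$, $C_S=C_{K_1}$, and $\sigma Y=AY$ with $Y\in\GL_n(S)$, so $S$ is at least a weak Picard--Vessiot ring over $K_1$ and $\Krd(S)={\rm tr.deg}(L/KC_L)$. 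Now assume $S$ is simple. Then $S$ is a Picard--Vessiot ring over $K_1$, and being a domain it has $\ell(S)=1$, $S^{*}=S^{*}_0=L$, so $m(S)=[D_L:D_{K_1}C_L]=[D_L:C_L]$. By Proposition~\ref{zp2} (its hypotheses $K_1=K(C_{K_1})$ and $C_K=D_K$ hold), $R'\otimes_KK_1$ is a direct sum of Picard--Vessiot rings over $K_1$, each with the Krull dimension and $m$-invariant of $R'$; and by Proposition~\ref{zp3} applied over $K_1$ (using $D_{K_1}=C_{K_1}$) all Picard--Vessiot rings over $K_1$ share these invariants. Therefore $\Krd(R')=\Krd(S)={\rm tr.deg}(L/KC_L)$ and $m(R')=m(S)=[D_L:C_L]$, which is the full assertion.

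So it remains to prove $S$ is simple, and for this it suffices to show ${\rm tr.deg}(L/KC_L)=\Krd(R')$ — equivalently, since ${\rm tr.deg}(L/K)=n^{2}$ and $K_1=K\otimes_{C_K}C_L$, that ${\rm tr.deg}(C_L/C_K)=n^{2}-\Krd(R')$. Indeed, let $\mathfrak{m}$ be any maximal $\sigma$-ideal of $S$; then $S/\mathfrak{m}$ is a Picard--Vessiot ring over $K_1$, so $\Krd(S/\mathfrak{m})=\Krd(R')$ by the previous paragraph, while $S$ is a domain of finite Krull dimension, so $\Krd(S/\mathfrak{m})<\Krd(S)$ unless $\mathfrak{m}=0$. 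Given $\Krd(S)={\rm tr.deg}(L/KC_L)=\Krd(R')=\Krd(S/\mathfrak{m})$, we must have $\mathfrak{m}=0$; thus $(0)$ is a maximal $\sigma$-ideal and $S$ is simple.

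For the transcendence-degree identity — morally, the constants of the \emph{generic} solution field are the function field of $\GL_n$ modulo the Galois group — I would proceed as follows. Base changing to $\overline{C_K}$ makes $\overline{K}:=K\otimes_{C_K}\overline{C_K}$ a field (again because $C_K=D_K$), $\overline{L}:=L\otimes_{C_K}\overline{C_K}=\overline{K}(Y)$, and $C_{\overline{L}}=C_L\otimes_{C_K}\overline{C_K}$, so the transcendence degree over the base is unchanged; let $\overline{R'}$ be a Picard--Vessiot ring over $\overline{K}$, with Galois group $G$ over $\overline{C_K}$, so $\dim G=\Krd(\overline{R'})=\Krd(R')=:d$ by Proposition~\ref{zp2}. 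In a suitable total quotient ring of $\overline{L}\otimes_{\overline{K}}\overline{R'}^{*}$, the generic solution $Y$ and the Picard--Vessiot solution $z$ are related by $Y=zM$ with $\sigma M=M$ and the $n^{2}$ entries of $M$ algebraically independent over $\overline{R'}^{*}$, and the constants of that ring are exactly $\overline{C_K}(M)$. The torsor structure $\overline{R'}\otimes_{\overline{K}}\overline{R'}\cong\overline{R'}\otimes_{\overline{C_K}}\overline{C_K}[G]$, together with the Galois correspondence (which identifies $\overline{K}(Y)$ with the $G$-fixed subfield for the action $z\mapsto zg$, $Y$ fixed), then gives $C_{\overline{L}}=\overline{K}(Y)\cap\overline{C_K}(M)=\overline{C_K}(M)^{G}$, where $G$ acts on $M$ by left translation $M\mapsto g^{-1}M$. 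Since that action on $\GL_{n,\overline{C_K}}$ is free, $\overline{C_K}(M)^{G}$ is the function field of the quotient variety $G\backslash\GL_{n,\overline{C_K}}$, of transcendence degree $n^{2}-\dim G=n^{2}-d$ over $\overline{C_K}$, as required. I expect the genuinely delicate part to be making this last step rigorous when $\overline{R'}$ is not a domain: one then has to pass to $\sigma^{\ell}$ as in the proof of Proposition~\ref{zp2}, work with the $\sigma^{\ell}$-field $\overline{R'}^{*}_{0}$ and the corresponding $\sigma^{\ell}$-Galois group, and track the total quotient rings and the fact that the fixed ring of the full automorphism group is the base field; the rest is formal.
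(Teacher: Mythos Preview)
Your reduction in the first two paragraphs is correct and clean: once you know $\Krd(S)=\Krd(R')$, simplicity of $S=KC_L[R]$ follows from the Krull–dimension drop for nonzero ideals in a domain together with Propositions~\ref{zp2} and~\ref{zp3}, and then the remaining equalities are bookkeeping exactly as you say. The verification $C_{K_1}=D_{K_1}=C_L$ is also right (and the paper uses it too, tucked into the general case).

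The substantive difference with the paper is your Step~4, and this is where there is a genuine gap. The paper does \emph{not} compute ${\rm tr.deg}(C_L/C_K)$ at all. Instead it embeds both objects into a single sufficiently saturated generic difference field $\calu$: when $R'$ is a domain one takes $\calu\supset R'$, and since $L$ is regular over $K$ one also gets a $K$-embedding $\varphi:L\hookrightarrow\calu$. Then $T=\varphi(Y)$ and the Picard--Vessiot solution $y$ satisfy $T=yB$ for some $B\in\GL_n(C_\calu)$, whence $KC_\calu[T,\det(T)^{-1}]=KC_\calu[y,\det(y)^{-1}]$. By Proposition~\ref{zp2} the right-hand side is one of the Picard--Vessiot summands of $R'\otimes_K KC_\calu$, so its Krull dimension and $m$-invariant equal those of $R'$; reading the same ring from the $T$-side gives ${\rm tr.deg}(L/KC_L)$ and $[D_L:C_L]$. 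In the non-domain case one replaces $R'$ by the $\si^\ell$-domain $R_0$ and runs the same argument in $(\calu,\si^\ell)$, then invokes Corollary~\ref{zcor1}. The whole point of the model-theoretic setup is precisely to have a common ambient field in which the two solution matrices become commensurable by a constant matrix.

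Your torsor route tries to replace this by showing $C_{\overline L}=\overline{C_K}(M)^G$, which in turn rests on $F^G=\overline L$. But you cannot obtain $F^G=\overline L$ by applying the Galois correspondence for the Picard--Vessiot extension $\overline{R'}\otimes_{\overline K}\overline L$ over $\overline L$: the available tool (Proposition~\ref{p2}) only tells you this tensor product is a Picard--Vessiot ring when $\overline L=\overline K(C_{\overline L})$, which is exactly the description of $C_{\overline L}$ you are trying to establish --- so the argument as written is circular. One can rescue it by proving $F^G=\overline L$ directly from the torsor identity $\overline{R'}\otimes_{\overline K}\overline{R'}\cong \overline{R'}\otimes_{\overline{C_K}}\overline{C_K}[G]$ (base-change to $\overline L$, take invariants of the coordinate ring, then pass to the fraction field using that the localization is at a $G$-fixed multiplicative set), but this is real work you have not done, and it still leaves the non-domain case --- where $\overline{R'}^{*}$ is not a field and ``$F$'' is only a product of fields --- essentially untouched. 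The paper's embedding into $\calu$ sidesteps all of this in one line.
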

\begin{proof}Let us first assume that $R'$ is a domain. There is some
  generic difference field $\calu$ containing $R'$ and its field of
  fractions $R^*$, and which is
  sufficiently saturated. Because $L$ is a regular extension of $K$,
  there is some $K$-embedding $\varphi$ of $L$ into $\calu$, and we will
  denote by $T$ the image of $Y$ in $\calu$, and by $y$ the image of $Y$
  in $R'$. Then $\varphi(C_L)\subset C_\calu$, and there is some $B\in
  \GL_n(C_{\calu})$ such that $T=yB$. Hence
  $$KC_\calu[T,\det(T)^{-1}]=KC_\calu[y,\det(y)^{-1}].$$
By Proposition \ref{zp2}, $R'\otimes_KKC_\calu$ is a direct sum of
Picard-Vessiot rings of $\si(X)=AX$ over $KC_\calu$, and clearly one
of those is the domain $KC_\calu[y,\det(y)^{-1}]$. Thus
$\Krd(R')={\rm tr.deg}(R^*/K)={\rm tr.deg}(L/KC_L)$,
$D_{R^*}C_\calu=\varphi(D_L)C_\calu$, and $m(R')=[D_L:C_L]$.

This implies also that $K\varphi(C_L)[T,\det(T)^{-1}]$ is a simple
difference ring, and therefore a Picard-Vessiot ring for $\si(X)=AX$
over $K\varphi(C_L)$. Hence $KC_L[R]$ is a Picard-Vessiot extension
for $\si(X)=AX$ over $KC_L$.

In the general case, we replace $R'$ by $R_0$, $\si$ by $\si^\ell$,
find some generic sufficiently saturated $\si^\ell$-difference field
$\calu$ containing $R_0$, and a $K$-embedding $\varphi$ of the
$\si^\ell$-difference domain $L$ into $\calu$, and conclude as above
that $KFix(\si^\ell)[R_0]=KFix(\si^\ell)[\varphi(R)]$,
 that the Krull dimension of $R'$ equals ${\rm
  tr.deg}(L/KC_L)$, and that
$m(R_0)=[Fix(\si^\ell)(\varphi(D_L)):Fix(\si^\ell)]$.

Because $K$ and $D_L$ are linearly disjoint over $C_K$,
$[KD_L:KC_L]=[D_L:C_L]$, whence $D_{KC_L}=KC_L$, and by Corollary
\ref{zcor1}, the difference domain $KC_L[R]$ is a simple difference
ring, i.e., a Picard-Vessiot ring for $\si(X)=AX$ over $KC_L$. By
Proposition \ref{zp2}  $m(R')=[D_L:C_L]$.

We have $m(R')=\ell m(R_0)$, and $m(R_0)$ is the quotient of
$[D_L:C_L]$ by the greatest common divisor of $[D_L:C_L]$ and
$\ell$.
\end{proof}

\begin{cor}\label{zcor2}Assume that $C_K=D_K$.  Let
  $R''=K[V,\det(V)^{-1}]$ be a difference domain, where $\si(V)=AV$,  with field of
 fractions $L_1$, and assume that $C_{L_1}$ is a finite algebraic
 extension of $C_K$. Then $R''$ is a Picard-Vessiot
ring for $\si(X)=AX$ over $K$.\end{cor}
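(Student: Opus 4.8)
The plan is to deduce this from Corollary~\ref{zcor1}. Since $R''$ is a domain it has no nilpotent elements, and $C_K=D_K$ by hypothesis, so by Corollary~\ref{zcor1} it suffices to prove that $\Krd(R'')=\Krd(R')$: once this is known, $R''$ is a finite direct sum of Picard-Vessiot rings for $\si(X)=AX$, and being a domain it consists of a single summand, so $R''$ itself is a Picard-Vessiot ring. As $R''=K[V,\det(V)^{-1}]$ is a domain finitely generated over the field $K$, we have $\Krd(R'')={\rm tr.deg}(L_1/K)$, so everything reduces to showing ${\rm tr.deg}(L_1/K)=\Krd(R')$.

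To prove this I would work inside a sufficiently saturated generic difference field $\calu$ into which $L_1$ embeds, so that $K\subset L_1\subset\calu$ and $V\in\GL_n(\calu)$ is a solution of $\si(X)=AX$ in $\calu$. Applying the construction of Subsection~\ref{zpv} to $K$ (as a difference subfield of $\calu$) and the equation $\si(X)=AX$: with $R=K[Y,\det(Y)^{-1}]$, $\si(Y)=AY$, and $L$ the field of fractions of $R$ (a regular extension of $K$), fix a $K$-embedding $\varphi:L\to\calu$ and set $T=\varphi(Y)$. Since $V$ and $T$ are both solutions of $\si(X)=AX$ in $\calu$, the matrix $V^{-1}T$ is fixed by $\si$, hence $V^{-1}T\in\GL_n(C_\calu)$ and $KC_\calu(V)=KC_\calu(T)$. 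Combining this with the linear disjointness of $\varphi(L)=K(T)$ and $KC_\calu$ over $KC_{\varphi(L)}$ proved in Subsection~\ref{zpv} (and with the fact that $\varphi$ maps $C_L$ onto $C_{\varphi(L)}$) yields ${\rm tr.deg}(KC_\calu(V)/KC_\calu)={\rm tr.deg}(L/KC_L)$, which equals $\Krd(R')$ by Proposition~\ref{zp4}, using $C_K=D_K$.

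The remaining step, and the one I expect to be the real obstacle, is to show that passing from $K$ to $KC_\calu$ does not raise the transcendence degree of $V$, that is, that $KC_\calu$ and $L_1$ are algebraically disjoint over $K$; this is precisely where the hypothesis that $C_{L_1}$ be finite over $C_K$ is used. It is enough to check that a finite tuple $c_1,\dots,c_r$ of elements of $C_\calu$ which is algebraically independent over $K$ remains algebraically independent over $L_1$. If it were not, choose a nonzero polynomial relation $P(c_1,\dots,c_r)=0$ with coefficients in $L_1$ involving the fewest possible monomials, and rescale $P$ so that one of its coefficients equals $1$; applying $\si$ (which fixes each $c_i$) and subtracting annihilates that monomial, so by minimality $P$ is fixed by $\si$, i.e.\ all its coefficients lie in $C_{L_1}$. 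Since $C_{L_1}$ is finite over $C_K$, the $c_i$ would then be algebraically dependent over $C_K$, contradicting their algebraic independence over $K$. Hence ${\rm tr.deg}(L_1/K)={\rm tr.deg}(KC_\calu(V)/KC_\calu)=\Krd(R')$, which completes the argument.
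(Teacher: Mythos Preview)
Your argument is correct and follows the same route as the paper: embed $R''$ (equivalently $L_1$) in a saturated generic $\calu$, embed $L$ as well, use that $V$ and $T$ differ by an element of $\GL_n(C_\calu)$ to get $KC_\calu[R'']=KC_\calu[\varphi(R)]$, deduce $\Krd(R'')=\Krd(R')$, and finish with Corollary~\ref{zcor1}. The paper's proof is terser: it simply writes ``Hence $\Krd(R'')=\Krd(R')$'' after the equality $KC_\calu[\varphi(R)]=KC_\calu[R'']$, leaving the reader to supply the two transcendence-degree comparisons you spell out. Your minimal-monomial argument showing that $L_1$ and $KC_\calu$ are algebraically disjoint over $K$ is exactly where the finiteness of $C_{L_1}$ over $C_K$ enters, and it is the step the paper suppresses; your appeal to the linear disjointness of $\varphi(L)$ and $KC_\calu$ over $K\varphi(C_L)$ from Subsection~\ref{zpv} (together with Proposition~\ref{zp4}) handles the other side.
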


\begin{proof} Let $\calu$ be a sufficiently saturated generic difference field
containing $R''$, and let $\varphi$ be a $K$-embedding of $L$ into
$\calu$. Then $KC_\calu[\varphi(R)]=KC_\calu[R'']$. Hence
$\Krd(R'')=\Krd(R')$ and $R''$ is a Picard-Vessiot ring by Corollary
\ref{zcor1}. \end{proof}

\begin{cor}\label{zcor3}Assume that $C_K$ is algebraically closed. Then
  $\ell(R')=[D_L:C_L]$.\end{cor}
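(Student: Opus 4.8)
The plan is to derive this as a short consequence of Proposition~\ref{zp4} together with the defining formula $m(R')=\ell(R')\cdot[D_{R_0^*}:D_KC_{R_0^*}]$. Since $C_K$ is algebraically closed it is relatively algebraically closed in $K$, so $D_K=C_K$; in particular the hypothesis $C_K=D_K$ of Proposition~\ref{zp4} is satisfied, and that proposition gives $[D_L:C_L]=m(R')$. Hence it suffices to show that the ``correction factor'' $[D_{R_0^*}:D_KC_{R_0^*}]$ equals $1$, i.e., that $D_{R_0^*}=C_K$ inside $R_0^*$.

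To do this I would first pin down the field of $\si^\ell$-constants of $R_0^*$. By Lemma~\ref{zpvlem2}.1 the field $C_{R'}$ is a finite algebraic extension of $C_K$, hence $C_{R'}=C_K$ since $C_K$ is algebraically closed; by Lemma~\ref{zpvlem1}.1 also $C_{R^*}=C_{R'}=C_K$. By Lemma~\ref{zpvlem1}.2 we have $Fix(\si^\ell)(e_0R^*)=e_0C_{R'}$, so under the identification $e_0R^*=R_0^*$ the field $C_{R_0^*}$ is the image of $C_K$ in $R_0^*$; thus $C_{R_0^*}=C_K$ and a fortiori $D_KC_{R_0^*}=C_K$. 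Now $R_0^*$ is a $\si^\ell$-difference field, so $D_{R_0^*}$ is by definition the relative algebraic closure of $C_{R_0^*}$ in $R_0^*$; as $C_{R_0^*}=C_K$ is algebraically closed this forces $D_{R_0^*}=C_{R_0^*}=C_K$. Therefore $[D_{R_0^*}:D_KC_{R_0^*}]=1$, so $m(R')=\ell(R')$, and combining with $[D_L:C_L]=m(R')$ yields $\ell(R')=[D_L:C_L]$.

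I do not expect a genuine obstacle here: the substantive work has already been carried out in Proposition~\ref{zp4} (via the embedding of $L$ into a sufficiently saturated generic difference field), and what remains is the bookkeeping showing that the $[D_{R_0^*}:D_KC_{R_0^*}]$ factor in the $m$-invariant collapses. The one place where algebraic closedness of $C_K$ is used in an essential way, beyond guaranteeing $C_K=D_K$, is this last step, which rests on the two elementary facts that the constants of a Picard--Vessiot ring over $K$ are algebraic over $C_K$ (Lemmas~\ref{zpvlem2}.1 and~\ref{zpvlem1}) and that there is no proper algebraic extension of an algebraically closed subfield inside an overfield. Were one reluctant to invoke Proposition~\ref{zp4}, one could instead re-run its saturation argument directly, but reusing the proposition is cleaner.
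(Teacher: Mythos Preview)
Your argument is correct and follows the same route as the paper, which simply says the result is immediate from Proposition~\ref{zp4} together with the fact that $D_{R^*}=C_{R'}=C_K$; you have merely unpacked why the correction factor $[D_{R_0^*}:D_KC_{R_0^*}]$ collapses to $1$, using Lemmas~\ref{zpvlem2}.1 and~\ref{zpvlem1} exactly as intended.
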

\begin{proof} Immediate from Proposition \ref{zp4} and the fact that
  $D_{R^*}=C_{R'}=C_K$.\end{proof}
\begin{cor}\label{zcor4}The difference ring $KC_L[R]$ is a
  Picard-Vessiot ring for $\si(X)=AX$ over $KC_L$. All
  Picard-Vessiot rings for $\si(X)=AX$ over $K$ have the same Krull
  dimension, which equals ${\rm tr.deg}(L/KC_L)$.\end{cor}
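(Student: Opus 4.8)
The plan is to drop the hypothesis $C_K=D_K$ from Propositions~\ref{zp3} and~\ref{zp4}, keeping only the conclusions about Krull dimension. It is worth stressing at the outset that the $m$-invariant \emph{cannot} be kept in this generality: by Remark~\ref{zgprem}, when $D_K\neq C_K$ the isomorphism type of the total Picard--Vessiot ring depends on how $\si$ extends to $\overline K$, i.e.\ on the difference-isomorphism type of ${\rm Core}(\overline K/K)$, so two Picard--Vessiot rings over $K$ may have different $m$-invariants; the \emph{dimension}, however, is insensitive to this finite datum, which is why exactly it survives.

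So let $R'$ be an arbitrary Picard--Vessiot ring for $\si(X)=AX$ over $K$. Arguing as in the proof of Proposition~\ref{zp4}, I would first reduce to the case that $R'$ is a domain: by Corollary~1.16 of \cite{PuSi} (whose second proof does not use that $C_K$ is algebraically closed) one passes to a power $\si^\ell$ and to $A_\ell=\si^{\ell-1}(A)\cdots\si(A)A$, so that $R_0=R/p$ is a domain and a Picard--Vessiot ring for $\si^\ell(X)=A_\ell X$ over $K$, with $\Krd(R')=\Krd(R_0)={\rm tr.deg}(R_0^*/K)$. Next embed $R_0$ and its field of fractions into a sufficiently saturated generic $\si^\ell$-difference field $\calu$, and, since $L$ is a regular extension of $K$, extend to a $K$-embedding $\varphi$ of $L$ into $\calu$; writing $T=\varphi(Y)$ and $y$ for the image of $Y$ in $R_0$, there is $B\in\GL_n(Fix(\si^\ell)(\calu))$ with $T=\varphi(y)B$, whence $KFix(\si^\ell)(\calu)[\varphi(y),\det(\varphi(y))^{-1}]=KFix(\si^\ell)(\calu)[T,\det(T)^{-1}]$. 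Taking transcendence degrees over $K$ on both sides --- and using that $C_{R'}$ is finite over $C_K$ (Lemma~\ref{zpvlem2}.1), that $Fix(\si^\ell)(L)$ is algebraic over $C_L$, and the genericity of $\calu$ to obtain the needed algebraic independence --- gives $\Krd(R_0)={\rm tr.deg}(L/KC_L)$. Since the right-hand side does not depend on $R'$, all Picard--Vessiot rings over $K$ have the same Krull dimension. The assertion that $KC_L[R]$ is itself a Picard--Vessiot ring over $KC_L$ is then obtained by transporting simplicity back along $\varphi$, as in the final paragraphs of the proof of Proposition~\ref{zp4}, the new ingredient being the analysis of the core described next.

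The one place where $D_K\neq C_K$ genuinely enters --- and the step I expect to be the main obstacle --- is the control of algebraic elements over the constants in the fields occurring above: one must verify that $KFix(\si^\ell)(\calu)$ and $\varphi(L)$ are linearly disjoint over $KFix(\si^\ell)(\varphi(L))$, and that the relevant relative algebraic closures of the constant fields differ only by a finite $\si$-stable extension, namely the core $D_{R_0^*}\cap\overline{K\varphi(C_L)}$. This is where Lemma~\ref{zlem1}, the stable embeddedness of the fixed fields (\ref{mt1}(vi)), the linear disjointness of $\overline K$ from $D_\calu$ over $\overline{C_K}$, and the lifting properties \ref{mt1}(iv)--(vii) would be used. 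Once it is checked that this finite piece influences only the component group of the Galois group (hence the $m$-invariant) and not the transcendence degree, the dimension statement, and with it the corollary, follows.
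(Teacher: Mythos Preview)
Your treatment of the Krull-dimension claim is essentially the paper's own: embed the domain $R'$ (or $R_0$, replacing $\si$ by $\si^\ell$, when $R'$ is not a domain) into a sufficiently saturated generic $\calu$, embed $L$ into the same $\calu$ using that $L/K$ is regular, observe that the two fundamental matrices differ by a matrix over the fixed field so that $KC_\calu[\varphi(R)]=KC_\calu[R']$, and compare transcendence degrees using only Lemma~\ref{zpvlem2}.1. The paper does exactly this in its last paragraph, with no appeal to Lemma~\ref{zlem1}, stable embeddedness, or the lifting properties (iv)--(vii); the ``main obstacle'' you anticipate simply does not arise for the dimension statement.

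Where your proposal departs from the paper, and where it has a real gap, is the proof that $KC_L[R]$ is Picard--Vessiot over $KC_L$. You propose to rerun the end of the proof of Proposition~\ref{zp4}, but that argument goes through Corollary~\ref{zcor1}, which assumes $C_K=D_K$; you then defer the needed adaptation to an unresolved analysis of the core. The paper avoids this entirely by a one-line reduction you have missed: set $m=[D_K:C_K]$ and pass to $\si^m$. Since $D_K$ and $D_L$ depend only on the group $\langle\si\rangle$, they are unchanged, while $Fix(\si^m)(K)=D_K$, so the hypothesis ``constants equal $D$-field'' now holds for $(K,\si^m)$. Corollary~\ref{zcor2}, applied with $\si^m$ and $A_m$, shows that the domain $KC_L[R]$ is a simple $\si^m$-difference ring; but any $\si$-ideal is in particular a $\si^m$-ideal, so $KC_L[R]$ is a simple $\si$-difference ring and hence Picard--Vessiot for $\si(X)=AX$ over $KC_L$. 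No core analysis, no stable embeddedness.

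Your opening paragraph on the $m$-invariant is a digression: the corollary makes no claim about it, and your appeal to Remark~\ref{zgprem} concerns the component structure of $\HX'$, which is related to but not the same thing as the $m$-invariant of a Picard--Vessiot ring.
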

\begin{proof}Let $m=[D_K:C_K]$. Note that replacing $\si$ by some power
of $\si$ does not change the fields $D_K$ or $D_L$, and that
$Fix(\si^m)(K)=D_K$.  Therefore we can apply the previous results to
the equation $\sigma^m(X) = A_mX$ over $K$. By Corollary \ref{zcor2}
and because $KC_L[R]$ is a domain, $KC_L[R]$ is a Picard-Vessiot
ring for $\si^m(X)=A_mX$ over $KC_L$, and therefore a simple
$\si^m$-difference ring, whence a simple $\si$-difference ring, and
finally a Picard-Vessiot ring for $\si(X)=AX$ over $K$. \\[0.1in]
Let $R'=R/q$ be a Picard-Vessiot ring for $\si(X)=AX$ over $K$.
Assume
  first that $R'$ is a domain, and let $\calu$ be a generic difference
  field containing it. Because $L$ is a regular extension of $K$, there
is a $K$-embedding $\varphi$ of $L$
  into $\calu$, and from $KC_\calu[\varphi(R)]=KC_\calu[R']$ and Lemma
  \ref{zpvlem2}.1, we obtain the result. If $R'$ is not a domain, then
  we reason in the same fashion, replacing $R'$ by $R_0$ and $\si$ by
  $\si^\ell$, to obtain the result.\end{proof}
\begin{prop}\label{zp5}Assume that $C_{R'}=C_K=D_K$ and
$K\subset \calu$. Then
  $G_{R'}$ and $\HX$ are isomorphic.\end{prop}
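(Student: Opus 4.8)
The plan is to factor the comparison through an auxiliary Galois group attached to a solution matrix living in $\calu$, using the base--change results of Section~\ref{galoissec}. Write $R'=R/q$ with $R=K[Y,\frac{1}{\det(Y)}]$, $\si(Y)=AY$ and $q$ a maximal $\si$-ideal, and recall from the construction of $\HX$ in Section~\ref{zpv} the $K$-embedding $\varphi:L\to\calu$ of $L={\rm Frac}(R)$, with $T=\varphi(Y)\in\calS$, so that $K(\calS)=KC_\calu(T)$ and $\HX$ is the algebraic group whose $C_\calu$-points are $H=\Aut(K(\calS)/KC_\calu)$. The goal is to produce isomorphisms $G_{R'}\simeq G_S\simeq G_{T''}=\HX$, where $S$ and $T''$ are two (weak) Picard--Vessiot rings for $\si(X)=AX$ over $KC_\calu$.

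First I would base--change $R'$. Since $C_{R'}=C_K$ and $K\subset\calu$, the field $KC_\calu=K(C_\calu)$ has constants $C_{KC_\calu}=C_\calu$, so Proposition~\ref{p2} applies with base $K$ and extension $KC_\calu$: the ring $S:=KC_\calu[Y,\frac{1}{\det(Y)}]/q\cdot KC_\calu\ (\simeq R'\otimes_KKC_\calu)$ is a Picard--Vessiot ring for $\si(X)=AX$ over $KC_\calu$ with $C_S=C_\calu$, and Corollary~\ref{cor1} gives $G_{R'}\simeq G_S$ over $C_\calu$. Next, set $T''=KC_\calu[T,\frac{1}{\det(T)}]\subset\calu$ and $L'={\rm Frac}(T'')=K(\calS)$; since $C_{T''}=C_\calu=C_{KC_\calu}$, $T''$ is a weak Picard--Vessiot ring and $L'$ a weak Picard--Vessiot field over $KC_\calu$ (using $D_K=C_K$, Proposition~\ref{zp4} even shows $K\varphi(C_L)[T,\frac{1}{\det(T)}]$, hence $T''$, is a genuine Picard--Vessiot ring). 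Applying Proposition~\ref{p3} to $(S,L')$ over $KC_\calu$ yields a $KC_\calu$-difference embedding $\rho:S\to L'\otimes_{C_\calu}\overline{C_\calu}$, and by Corollary~\ref{cor2} this $\rho$ restricts to an isomorphism $S\otimes_{C_\calu}\overline{C_\calu}\simeq T''\otimes_{C_\calu}\overline{C_\calu}$, so $G_S\simeq G_{T''}$ over $\overline{C_\calu}$.

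Finally I would identify $G_{T''}$ with $\HX$. Write $T''=KC_\calu[Y,\frac{1}{\det(Y)}]/\mathcal I$ with $\mathcal I$ the prime $\si$-ideal of $T$ over $KC_\calu$; its zero set is exactly the locus $W$ of $T$ over $KC_\calu$ used to define $\HX$. For a $C_\calu$-algebra $B$, the functor of Proposition~\ref{p1} attached to $T''$ sends $B$ to $\{M\in\GL_n(B)\mid \phi_M(\mathcal I)\subset(\mathcal I)\}$ with $\phi_M:Y\mapsto YM$, i.e. $\{M\mid WM\subset W\}$, which on group elements is $\{M\mid WM=W\}$ --- precisely the defining condition of $\HX$ from Section~\ref{zpv}. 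On $C_\calu$-points this matches $\HX(C_\calu)=H$: every difference $KC_\calu$-automorphism of $T''$ carries $T$ to $TM$ with $M\in\GL_n(C_{T''})=\GL_n(C_\calu)$ stabilizing $W$, and conversely each $M\in H$ preserves $T''=KC_\calu[T,\frac{1}{\det(T)}]$. Hence $G_{T''}=\HX$ as algebraic groups over $C_\calu$, and chaining the steps gives $G_{R'}\otimes\overline{C_\calu}\simeq\HX\otimes\overline{C_\calu}$, i.e. $G_{R'}$ and $\HX$ are isomorphic.

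I expect the main obstacle to be the bookkeeping in the last step: carefully matching the functorial description of $G_{T''}$ from Proposition~\ref{p1} with the stabilizer-of-$W$ description of $\HX$, and tracking fields of definition throughout ($C_K$ for $G_{R'}$, $C_\calu$ for $G_S$ and $\HX$, with $\overline{C_\calu}$ entering only through Corollary~\ref{cor2}); if one wants the isomorphism over $C_\calu$ rather than over $\overline{C_\calu}$, one should further exploit that $C_\calu$ is pseudo-finite to descend the conjugating matrix appearing in Corollary~\ref{cor2}. A secondary point is checking all hypotheses of Propositions~\ref{p2} and~\ref{p3} ($q$ maximal, $C_{KC_\calu}=C_\calu$, $C_{T''}=C_\calu$) and noting exactly where $D_K=C_K$ is used (regularity of $K/C_K$ and the applicability of Proposition~\ref{zp4}).
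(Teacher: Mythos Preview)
Your argument is correct and follows the same overall strategy as the paper: base-change $R'$, compare with a Picard--Vessiot ring built from $T$, and identify the resulting Galois group with $\HX$ through the stabilizer-of-$W$ description. The execution differs in one respect. You stay inside $\calu$, base-change to $KC_\calu$, and then invoke Corollary~\ref{cor2} (hence Proposition~\ref{p3}) to compare $S$ and $T''$ over $\overline{C_\calu}$. The paper instead tensors directly with $KD'_\calu$ --- where $D'_\calu$ is the algebraic closure of $C_\calu$ endowed with the \emph{trivial} $\sigma$-action, so $KD'_\calu$ is not a subfield of $\calu$ --- and then uses uniqueness of Picard--Vessiot rings over a base with algebraically closed constants (Proposition~1.9 of \cite{PuSi}) to conclude that $R'\otimes_K KD'_\calu$ and $K\varphi(C_L)[T,\det(T)^{-1}]\otimes_{K\varphi(C_L)}KD'_\calu$ are isomorphic. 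The identification with $\HX(D'_\calu)$ then follows because the algebraic locus of $T$ over $K\varphi(C_L)$ and over $KD'_\calu$ coincide by linear disjointness, so the stabilizer is $\HX$ in either description. Your route recycles more of the machinery of Section~\ref{galoissec}; the paper's is a bit more direct because it jumps immediately to algebraically closed constants and avoids the embedding argument of Proposition~\ref{p3}.
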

\begin{proof} By Proposition \ref{p2}, we may replace $R'$ by
  $R'\otimes_KKD'_\calu$, and consider the ring
  $K\varphi(C_L)[T, \det(T)^{-1}]\otimes_{K\varphi(C_L)}KD'_\calu$, which is a Picard-Vessiot ring by
 Proposition~\ref{zp4} and  Corollary \ref{zcor1}.  We
  identify $1\otimes KD'_\calu$ with $KD'_\calu$. These two rings are
  isomorphic over $KD'_\calu$ by Proposition 1.9 of \cite{PuSi}, and it
  therefore suffices to show that
  $$\Aut(\varphi(L)\otimes_{K\varphi(C_L)}KD'_\calu/KD'_\calu)=\HX(D'_\calu).$$
 Inside
  $\varphi(L)\otimes_{K\varphi(C_L)}KD'_\calu$, $\varphi(L)\otimes 1$ and
  $KD'_\calu$ are linearly disjoint over $K\varphi(C_L)$. Hence, the
  algebraic loci of   $(T,\det(T)^{-1})$ over $K\varphi(C_L)$
  and over $KD'_\calu$ coincide. As $\HX$ was described as the subgroup
  of $\GL_n$ which leaves this algebraic set invariant, we get the result.
 \end{proof}
\subsection{Concluding remarks}\label{mt4}

\begin{remark}
{\bf Model-theoretic  Galois groups: definition and a bit of history}. 
{\rm Model-theoretic Galois groups first appeared in a paper by Zilber
  \cite{zilber} in the context of
$\aleph_1$-categorical theories, and under the name of {\em binding
    groups}. Grosso modo, the general situation is as 
follows: in a saturated model $M$ we have definable sets $D$ and $C$
such that, for some finite tuple $b$ in $M$, $D\subset \dcl(C,b)$ (one
then says that $D$ is $C$-internal). The 
group $\Aut(M/C)$ induces a group of (elementary) 
permutations of $D$, and it is this group which one calls the {\em Galois
group of $D$ over $C$}. In Zilber's context, this group and its action on $D$ are 
definable in $M$. One issue is therefore to find the correct
assumptions so that these Galois groups and their action are definable,
or at least, an intersection of definable groups. Hrushovski shows in his  PhD
thesis (\cite{HrPhD}) that  this is the case when the ambient theory is
stable.

Poizat, in \cite{poizat}, recognized the importance of elimination of imaginaries in
establishing the Galois correspondence for these Galois groups. He also
noticed that if $M$ is a differentially  
closed field of characteristic $0$ and $D$ is the set of solutions of
some linear differential equation over some differential subfield $K$ of
$M$, and  $C$  is the
field of constants of $M$, then the
model-theoretic Galois group coincides with the differential Galois
group introduced by Kolchin \cite{kolchin48}. This connection was further
explored by Pillay in a series of papers, see \cite{pillay}.  Note that
because the theory of differentially closed fields of characteristic $0$
eliminates quantifiers, this Galois group does coincide with the group
of $KC$-automorphisms of the differential field $KC(D)$.}\end{remark}

Since then, many authors studied or used Galois groups, under various
assumptions on the ambient theory, and in various contexts, either
purely model-theoretic (e.g., simple theories) or more algebraic (e.g. fields with Hasse
derivations).
In the context of generic difference fields, (model-theoretic) Galois
groups were investigated in  (5.11) of \cite{ChHr1} 
(a slight modification in 
the proof then gives the Galois group described in  section 4.1
of this paper). In positive characteristic $p$,   
the results generalize easily to  twisted difference equations of the form
$\si(X)=AX^{p^m}$, the field $Fix(\si)$ being then replaced
by $Fix(\tau)$, where $\tau:x\mapsto \si(x)^{p^{-m}}$.

\medskip
Recent work of Kamensky (\cite{kamensky}) isolates the common
ingredients underlying all the definability results on
  Galois groups, and in particular {\em very much weakens the assumptions}
  on the ambient theory (it is not even assumed to be complete).  
With the correct definition of
$C$-internality of the definable set $D$, he is able to 
show that a certain group of permutations of $D$ is definable in
$M$. These are just 
permutations, do not a priori preserve any relations of the language
other than equality. From this group, he is then able to show that
subgroups which preserve a (fixed) finite set of relations are also
definable, and that 
the complexity of the defining formula does not increase, or not too
much. For details, see section 3 of \cite{kamensky}. 

This approach of course applies to the set $D$ of solutions of a linear
system of difference equations (over a difference field $K$), and Kamensky
also obtains the result that $\Aut(KFix(\si)(D)/KFix(\si))$ is definable (see
section 5 in \cite{kamensky}). 

\begin{remark} {\rm A question arises in view of the proof of the
    general case of Proposition \ref{zp4}. When $R'$ is not a domain, we
    found an embedding of the $\si^\ell$-difference ring $R_0$ into a
    generic $\si^\ell$-difference field $\calu$. It may however happen
    that  $K$ is not relatively algebraically closed in $R_0^*$, even
    when $D_{R_0}=C_K$. Thus one can wonder:
    can one always find a generic difference field $\calu$ containing $K$, and
    such that there is a $K$-embedding of the $\si^\ell$-difference ring
    $R_0$ into $(\calu,\si^\ell)$? Or are there Picard-Vessiot rings for
    which this is impossible?}\end{remark}

\begin{remark}
{\bf Issues of definability}. {\rm It is fairly clear that the
algebraic group $\HX$ is defined over $\varphi(KC_L)$. On the other
hand, using the saturation of $\calu$ and the fact that $L$ is a
regular extension of $K$, we may choose another $K$-embedding
$\varphi_1$ of $L$ in $\calu$, and will obtain an algebraic group
$\HX_1$, which will be isomorphic to $\HX$ (via some matrix
$C\in\GL_n(C_\calu)$). It follows that $\HX$ is $K$-isomorphic to an
algebraic group $\HX_0$ defined over the intersections of all
possible $\varphi(KC_L)$, i.e., over $K$.}\end{remark} Observe that
the isomorphism between $\HX$ and $\HX_1$ yields an isomorphism
between $\HX(C_\calu)$ and $\HX_1(C_\calu)$, so that we will also
have an isomorphism between $\HX_0(C_\calu)$ and $\HX(C_\calu)$,
i.e., $\HX'$ is $K$-isomorphic to an algebraic subgroup of $\HX_0$
which is defined over $\overline{C_K}\cap C_\calu$. Thus when $C_K$
is algebraically closed,
it will be defined over $C_K$.\\[0.1in]
The Galois duality works as well for subgroups of $\HX(C_\calu)$
defined by equations (i.e., corresponding to algebraic subgroups of
$\HX'$, whose irreducible components are defined over $C_\calu$). It
works less well for arbitrary definable subgroups of $\HX(C_\calu)$.
In order for it to work, we need to replace $K(\calS)$ by its
definable closure $\dcl(K\calS)$, i.e., the subfield of $\calu$
which is fixed by all elements of $\Aut_{el}(\calu/K\calS)$. Because
the theory of $\calu$ eliminates imaginaries (1.10 in \cite{ChHr1}),
any orbit of an element of $\calS$ under the action of a definable
subgroup of $\HX(C_\calu)$ has a ``code'' inside $\dcl(K\calS)$.

\begin{remark}
{\bf Problems with the algebraic closure}. {\rm Assume that $\calu$
is a
  generic difference field containing $K$, and sufficiently
  saturated. Then if $K$ is not relatively algebraically closed in the
  field of fractions of $R_0$, we may not be able to find a
  $K$-embedding of $R_0$ into the $\si^\ell$-difference field $\calu$. Thus in particular, a priori not all
Picard-Vessiot domains $K$-embed into $\calu$. This problem of
course does not arise if we assume that $K$ is algebraically closed,
or, more
precisely, if we assume that\\
{\em All extensions of the automorphism $\si$ to the algebraic
closure of $K$ define $K$-isomorphic difference fields.}
}\end{remark} This is the case if $K$ has no finite (proper)
$\si$-stable extension, for instance when $K=\CX(t)$, with
$\si(t)=t+1$ and $\si$ the identity
on $\CX$. \\
However, in another classical case, this problem does arise: let
$q\in \CX$ be non-zero and not a root of unity, and let $K=\CX(t)$,
where $\si$ is the identity on $\CX$ and $\si(t)=qt$. Then $K$ has
non-trivial finite $\si$-stable extensions, and they are obtained by
adding $n$-th roots
of $t$.\\
Let us assume that, inside $\calu$, we have
$\si(\sqrt{t})=\sqrt{q}\sqrt{t}$. Let us consider the system
$$\si(Y)=-\sqrt{q}Y,\ \ Y\neq 0$$ over $K$. Then the Picard-Vessiot
ring is $R'=K(y)$, where $y^2=t$ and $\si(y)=-\sqrt{q}y$. Clearly
$R'$ does not embed in $\calu$. If instead we had considered this
system over $K(\sqrt{t})$, then the new Picard-Vessiot ring $R''$ is
not a domain anymore, because it will contain a non-zero solution of
$\si(X)+X=0$ (namely, $y/\sqrt{t}$). In both cases however the
Galois group is $\ZX/2\ZX$. And because $R'$ embeds in $R''$, it
also embeds
in $K(T)\otimes_{\varphi(C_L)}D'_\calu$. \\[0.1in]
This suggests that, when $C_K=D_K$,  if one takes $\calM$ to be the
subfield of $\calu$ generated over $KC_\calu$ by all tuples of
$\calu$ satisfying some linear difference equation over $K$, then
$\calM\otimes_{C_\calu}D'_\calu$ is a universal (full)
Picard-Vessiot ring of $KD'_\calu$. This ring is not so difficult to
describe in terms of $\calM$. Observe that $\calM$ contains
$D_\calu$. Thus $\calM\otimes_{C_\calu}D'_\calu$ is isomorphic to
$\calM\otimes_{D_\calu}(D_\calu\otimes_{C_\calu}D'_\calu)$. It is a
regular ring, with prime spectrum the Cantor space $\calC$ (i.e.,
the prime spectrum of $D_\calu\otimes_{C_\calu}D'_\calu$),  and
$\si$ acting on $\calC$. As a ring, it is isomorphic to the ring of
locally constant functions from $\calC$ to $\calM$.

It would be interesting to relate this ring to the universal
Picard-Vessiot rings defined in \cite{PuSi}.

\begin{remark}{\bf Saturation hypotheses}. {\rm The saturation
    hypothesis on $\calu$ is not really needed to define the
    model-theoretic  Galois
    group, since we only need $\calu$ to contain a copy of $L$ to define
    it. We also used it in the proof of Proposition \ref{zp4}, when we
    needed a $K$-embedding of $L$ into $\calu$. Thus, to define the
    model-theoretic Galois group, we only need $\calu$ to be a generic
    difference field containing $K$. Its field of constants will however
    usually be larger than $C_K$. Indeed, the
    field $C_\calu$ is always a pseudo-finite field (that is, a perfect,
    pseudo-algebraically closed field, with Galois group isomorphic to
    $\hat\ZX$). However, one can show that if $F$ is a pseudo-finite
    field of characteristic $0$, then there is a generic difference
    field $\calu$ containing $F$ and such that $C_\calu=F$. Thus, the
    field of constants of $\calu$ does not need to be much larger than
    $C_K$.  In the general case, a general non-sense construction allows
    one to find a
    pseudo-finite field $F$ containing $C_K$ and of transcendence degree
    at most $1$ over $C_K$.}\end{remark}

\begin{remark}{\bf A partial description of the maximal $\si^\ell$-ideal $p$ of
    $R$}.
{\rm We keep the notation of the previous subsections, and will
first assume that the Picard-Vessiot ring $R'=R/q$ is a domain
contained in $\calu$.}\end{remark} We will describe some of the
elements of $q$. Write
 $C_L=C_K(\alpha_1,\ldots,\alpha_m)$, and
$\alpha_i=f_i(Y)/g_i(Y)$, where $f_i(Y), g_i(Y)\in K[Y]$ are
relatively prime. Then $\si(f_i)(AY)$ and $\si(g_i)(AY)$ are also
relatively prime. Looking at the divisors defined by these
polynomials, we obtain that there is some $k_i\in K$ such that
$\si(f_i)(AY)=k_if_i(Y)$ and $\si(g_i)(AY)=k_ig_i(Y)$. Then
$(q,f_i(Y))$ and $(q,g_i(Y))$ are $\si$-ideals. By the maximality of
$q$, this implies that either $f_i(Y)$ and $g_i(Y)$ are both in $q$, or
else, say if $f_i(Y)\notin q$, that there is some $c_i\in C_{R'}$
such that $g_i(y)=c_if_i(y)$, because $f_i(y)$ is invertible in
$R'$. If $P_i(Z)$ is the minimal monic polynomial of $c_i$ over $C_K$ and
is of degree $r$, then $g_i(Y)^rP_i(g_i(Y)/f_i(Y))\in q$. In case
$C_{R'}=C_K$ (this is the case for instance if
$C_K$ is algebraically closed), then $c_i\in C_K$, and 
  $g_i(Y)-c_if_i(Y)$ will belong to $q$. (Note also
that if $k_i=k_j$, then also for some $d_j\in C_K$ we will have
$f_j(Y)-d_jf_i(Y)\in q$, and $g_j(Y)-c_jd_jf_i(Y)\in q$). The
$\si$-ideal $I$ generated by all these polynomials in $R$ could
all of $q$. In any case one shows easily that $q$ is a minimal prime
ideal containing it (because $KC_L[Y,\det(Y)^{-1}]$ and $R/I$ have
the same
Krull dimension, which is also the Krull dimension of $R'$).

A  better result is obtained by Kamensky in \cite{kamensky} Proposition
33: if $C_{R'}=C_K$, and instead of 
looking at a generating set of $C_L$ over $C_K$ one applies the same
procedure to all elements of $C_L$, one obtains a generating set of the
ideal $q$.

In case $R'$ is not a domain, we reason in the same fashion to get a
partial description of the $\si^\ell$-ideal $p$.

\bibliographystyle{plain}

\begin{thebibliography}{10}

\bibitem{andre_galois}
Yves Andr{\'e}.
\newblock Diff{\'e}rentielles non commutatives et th{\'e}orie de {G}alois
  diff{\'e}rentielle ou aux diff{\'e}rences.
\newblock {\em Ann. Sci. {\'E}cole Norm. Sup. (4)}, 34(5):685--739, 2001.

\bibitem{BB2}
Andrzej Bialynicki-Birula.
\newblock On the {G}alois theory of fields with operators.
\newblock {\em Amer. J. Math.}, 84:89--109, 1962.

\bibitem{breen}
Lawrence Breen.
\newblock Tannakian categories.
\newblock In U.~Jannsen and {et al}, editors, {\em Motives}, volume~55 of {\em
  Proceedings of Symposia in Pure Mathematics}, pages 337--376. American
  Mathematical Society, 1994.

\bibitem{ChHr1}
Zo{\'e} Chatzidakis and Ehud Hrushovski.
\newblock Model theory of difference fields.
\newblock {\em Trans. Amer. Math. Soc.}, 351(8):2997--3071, 1999.

\bibitem{cohn}
Richard Cohn.
\newblock {\em {Difference Algebra}}.
\newblock Tracts In Mathematics, Number 17. Interscience Press, New York, 1965.

\bibitem{deligne_tannakian}
Pierre Deligne.
\newblock Cat{\'e}gories tannakiennes.
\newblock In P.~Cartier {et al.}, editor, {\em Grothendieck Festschrift, Vol.
  2}, pages 111--195. Birkh{\"a}user, 1990.
\newblock Progress in Mathematics, Vol. 87.

\bibitem{deligne_milne}
Pierre Deligne and James Milne.
\newblock Tannakian categories.
\newblock In P.~Deligne {et al.}, editor, {\em {Hodge Cycles, Motives and
  Shimura Varieties}}, pages 101--228, 1982.
\newblock Lecture Notes in Mathematics, Vol. 900.

\bibitem{dyckerhoff}
Tobias Dyckerhoff.
\newblock {Picard-Vessiot Extensions over Number Fields}.
\newblock Diplomarbeit, {Fakult{\"a}t f{\"u}r Mathematik und Informatik der
  Universit{\"a}t Heidelberg}, 2005.

\bibitem{etingof}
Pavel~I. Etingof.
\newblock Galois groups and connection matrices of {$q$}-difference equations.
\newblock {\em Electron. Res. Announc. Amer. Math. Soc.}, 1(1):1--9
  (electronic), 1995.

\bibitem{franke63}
Charles~H. Franke.
\newblock {Picard-Vessiot} theory of linear homogeneous difference equations.
\newblock {\em Transactions of the AMS}, 108:491--515, 1963.

\bibitem{hardouin_thesis}
Charlotte Hardouin.
\newblock {\em {Structure Galoisienne des Extensions It{\'e}r{\'e}es de Modules
  Diff{\'e}rentiels}}.
\newblock Ph.D. thesis, {Paris 6}, 2005, {\tt http://www.institut.math.jussieu.fr/theses/2005/hardouin/}.

\bibitem{HrPhD}
Ehud Hrushovski.
\newblock{\em Contributions to stable model theory}.
\newblock PhD thesis, Berkeley, 1986.



\bibitem{kamensky}
Moshe Kamensky.
\newblock {Definable groups of partial automorphisms}.
\newblock Preprint; {\tt http://arxiv.org/abs/math.LO/0607718}, 2006.

\bibitem{katz_calculations}
Nicholas Katz.
\newblock On the calculation of some differential {Galois} groups.
\newblock {\em Inventiones Mathematicae}, 87:13--61, 1987.

\bibitem{kolchin48}
Ellis~R. Kolchin.
\newblock Algebraic matric groups and the {Picard-Vessiot} theory of
  homogeneous linear ordinary differential equations.
\newblock {\em Annals of Mathematics}, 49:1--42, 1948.

\bibitem{kolchin_exist}
Ellis~R. Kolchin.
\newblock Existence theorems connected with the {P}icard-{V}essiot theory of
  homogeneous linear ordinary differential equations.
\newblock {\em Bull. Amer. Math. Soc.}, 54:927--932, 1948.

\bibitem{DAAG}
Ellis~R. Kolchin.
\newblock {\em {Differential Algebra and Algebraic Groups}}.
\newblock Academic Press, New York, 1976.

\bibitem{papanikolas}
Mathew~A. Papanikolas.
\newblock {Tannakian Duality for Anderson-Drinfeld Motives and Algebraic
  Independence of Carlitz Logarithms}.
\newblock {Preprint, Texas A\&M University;} {\tt
  http://arxiv.org/abs/math.NT/0506078}, 2005.

\bibitem{pillay}
A. Pillay. 
\newblock{Differential Galois Theory I}.
\newblock{\em Illinois J. Math.} 42 (1998), 678--699.

\bibitem{poizat}
Bruno Poizat.
\newblock{Une th\'eorie de Galois imaginaire}.
\newblock {\em J. of Symb. Logic}, 48:1151--1170, 1983.


\bibitem{praagman}
Cornelius Praagman.
\newblock Fundamental solutions for meromorphic linear difference equations in
  the complex plane, and related problems.
\newblock {\em J. Reine Angew. Math.}, 369:101--109, 1986.

\bibitem{vdp_reversat}
Marius {\SortNoop{Put}}van~der Put and Marc Reversat.
\newblock {Galois Theory of $q$-difference equations}.
\newblock {Preprint; http://arxiv.org/abs/math.QA/0507098}, 2005.

\bibitem{PuSi}
Marius {\SortNoop{Put}}van~der Put and Michael~F. Singer.
\newblock {\em {Galois Theory of Difference Equations}}, volume 1666 of {\em
  Lecture Notes in Mathematics}.
\newblock Springer-Verlag, Heidelberg, 1997.

\bibitem{PuSi2003}
Marius {\SortNoop{Put}}van~der Put and Michael~F. Singer.
\newblock {\em {Galois Theory of Linear Differential Equations}}, volume 328 of
  {\em Grundlehren der mathematischen Wissenshaften}.
\newblock Springer, Heidelberg, 2003.

\bibitem{ramis_sauloy}
Jean-Pierre Ramis and Jacques Sauloy.
\newblock {The $q$-analogue of the wild fundamental group (I)}.
\newblock Preprint; {\tt http://arxiv.org/abs/math.QA/0611521}, 2006.

\bibitem{sauloy_galois}
Jacques Sauloy.
\newblock Galois theory of {F}uchsian {$q$}-difference equations.
\newblock {\em Ann. Sci. {\'E}cole Norm. Sup. (4)}, 36(6):925--968 (2004), 2003.

\bibitem{waterhouse}
William~C. Waterhouse.
\newblock {\em {Introduction to Affine Group Schemes}}, volume~66 of {\em
  Graduate Texts in Mathematics}.
\newblock Springer-Verlag, New York, 1979.

\bibitem{zilber}
Boris I. Zi'lber.
\newblock {Totally categorical theories; structural properties and the
  non-finite axiomatizability}.
\newblock in L. Pacholski et al. editors, {\em  Model theory of algebra
  and arithmetic (Proc. Conf., 
  Karpacz, 1979)}, pages 381--410.
\newblock
Lecture Notes in Mathematics volume 834, Springer Verlag, Berlin
Heidelberg 1980.

\end{thebibliography}
\newcommand{\SortNoop}[1]{}

\bigskip\noindent{\bf Addresses}

\smallskip\noindent
UFR de Math{\'e}matiques\\
Universit{\'e} Paris 7 - Case 7012\\ 
2 place Jussieu\\
75251 Paris cedex 05\\
France. \\
{\tt zoe@logique.jussieu.fr}

\smallskip\noindent
Institut de
Math{\'e}matiques\\
Universit{\'e} Paris 6\\
Tour 46 5e {\'e}tage\\
Boite 247\\
4 place Jussieu\\
75252 Paris cedex 05\\
France\\
{\tt hardouin@math.jussieu.fr}

\smallskip\noindent
North
Carolina State University\\
Department of Mathematics\\
Box 8205\\
Raleigh, North Carolina 27695-8205\\
USA\\
{\tt singer@math.ncsu.edu}.

\end{document}